\documentclass[12pt,reqno]{amsart}
\usepackage[T1]{fontenc}
\usepackage{lmodern,microtype,amsmath,amssymb,amsthm,mathtools,mathrsfs}
\usepackage[a4paper,hmargin=26mm,vmargin=25mm,headheight=13pt,headsep=7mm]{geometry}
\usepackage{booktabs,array,xurl,needspace}
\usepackage[colorlinks=true,linkcolor=blue,citecolor=blue,urlcolor=blue]{hyperref}

\usepackage{xcolor}
\definecolor{RevisionBlue}{rgb}{0,0,1}
\newif\ifBlueChanges
\BlueChangestrue
\DeclareRobustCommand{\BlueChange}[1]{%
  \ifBlueChanges{\color{RevisionBlue}#1}\else#1\fi}

\hypersetup{hidelinks}
\numberwithin{equation}{section}
\newtheorem{theorem}{Theorem}[section]
\newtheorem{proposition}[theorem]{Proposition}
\newtheorem{lemma}[theorem]{Lemma}
\newtheorem{corollary}[theorem]{Corollary}
\theoremstyle{definition}
\newtheorem{definition}[theorem]{Definition}
\theoremstyle{remark}
\newtheorem{remark}[theorem]{Remark}
\newtheorem{example}[theorem]{Example}
\newcommand{\R}{\mathbb R}
\newcommand{\Z}{\mathbb Z}
\newcommand{\N}{\mathbb N}
\newcommand{\Dt}{\mathsf D}
\newcommand{\lD}{\underline{\mathsf D}}
\newcommand{\uD}{\overline{\mathsf D}}
\newcommand{\Dpm}{\mathsf D}
\newcommand{\lDim}{\underline{\operatorname{Dim}}}
\newcommand{\uDim}{\overline{\operatorname{Dim}}}

\newcommand{\Ct}{\mathscr C}
\newcommand{\Ht}{\mathcal H}
\newcommand{\Mt}{\mathcal M}
\DeclareMathOperator\hDim{\mathrm{Dim}_{\mathrm{H}}}
\newcommand{\UMdim}{\dim_{\mathrm{UM}}}
\newcommand{\LMdim}{\dim_{\mathrm{LM}}}

\newcommand{\Pf}{\mathcal P_f}
\newcommand{\diam}{\operatorname{diam}}

\newcommand{\ind}{\mathbf 1}
\newcommand{\ee}{\mathbf e_1}

\allowdisplaybreaks[1]
\hypersetup{pdftitle={Projection theorems for macroscopic intermediate dimensions},pdfauthor={Jinjun Li, Lifeng Xi},pdfsubject={Macroscopic intermediate dimensions, capacity profiles, and Grassmannian projections}}

\title[Macroscopic intermediate dimensions and projections]{Projection theorems for macroscopic intermediate dimensions}

\author[J. Li]{Jinjun Li}
\address{School of Mathematics and Information Science, Guangzhou University, Guangzhou 510006, P. R. China}
\email{lijinjun@gzhu.edu.cn}

\author[L. Xi]{Lifeng Xi$^\ast$}
\address{School of Mathematics and Statistics, Ningbo University, Ningbo 315211, P. R. China}
\email{xilifeng@nbu.edu.cn}

\thanks{$^\ast$Corresponding author. Email: xilifengningbo@yahoo.com; xilifeng@nbu.edu.cn}

\subjclass[2020]{Primary 28A80; Secondary 31C40}
\keywords{macroscopic Hausdorff dimension, macroscopic intermediate dimension,  orthogonal projection,  discrete Moran set}
\date{\BlueChange{27} September 2026}

\begin{document}
	
\begin{abstract}
We introduce lower and upper macroscopic intermediate dimensions of unbounded subsets of Euclidean space by combining covering contributions from all dyadic annuli beyond a given scale. For every unbounded Borel set $E\subset\R^d$ and $m\in \{1,\ldots,d-1\}$, we prove that the macroscopic intermediate dimensions of $P_VE$ equal the corresponding intrinsic capacity profiles for almost every 
$m$-dimensional linear subspaces $V\subset \R^d$. One full-measure set works for both dimensions and every parameter $\theta\in[0,1]$. The proof combines a covering--capacity comparison with an exponent gap, fractional-moment estimates for projected coverings, and capacity-weighted retention of source shells. The dimensions recover macroscopic Hausdorff dimension at parameter zero, are invariant under quasi-isometries, and are continuous at positive parameters. The upper macroscopic intermediate dimension agrees with the  upper discrete intermediate dimension introduced in \cite{LXZ}, whereas the lower one can differ. We compute the projected dimensions of bounded-base discrete digit Moran sets, including the self-similar case obtained when the bases and digit sets are constant, and give applications to Lipschitz graphs and arithmetic sumsets.
\end{abstract}

\maketitle
\enlargethispage{3pt}

\section{Introduction and main results}\label{sec:intro}

Projection theorems relate the dimension of a set to the dimensions of its images in lower-dimensional spaces. Marstrand's theorem and Kaufman's energy proof \cite{Marstrand,Kaufman} give the classical planar Hausdorff result. For box and packing dimensions, capacity profiles describe  typical projection values \cite{FH96,FH97,Howroyd}. Falconer, Fraser and Kempton \cite{FFK} introduced intermediate dimensions by restricting covering sizes, and Burrell, Falconer and Fraser \cite{BFF} proved the corresponding projection formulae. Related preservation results use local covering information \cite{FFS}; see also the survey \cite{Fraser25}.

For unbounded sets, Barlow and Taylor \cite{BT89,BT92} introduced macroscopic Hausdorff dimension. This dimension disregards geometry below a fixed unit scale and measures the distribution of a set at infinity. Daw and Seuret \cite{DS} developed a macroscopic potential method and proved a planar projection theorem for macroscopic Hausdorff dimension. 
%Their energy theorem applies in every Euclidean dimension. We use that energy theorem, but prove the higher-dimensional projection statement here rather than assuming an extension of their planar result.
Counting and mass dimensions provide another large-scale approach to study unbounded sets and the Marstrand-type theorems are also establised for them, see \cite{LM,Glasscock}.

Li, Xi and Zhang \cite{LXZ} defined discrete intermediate dimensions using scale-dependent finite ranges of dyadic annuli and an auxiliary-parameter supremum. A projection creates an additional difficulty: points from arbitrarily distant source annuli may project into the same target annulus. We therefore include all source annuli beyond the outer scale, since even very distant annuli may contribute to the same projected annulus. We call the resulting quantities \emph{macroscopic intermediate dimensions}. We use the term macroscopic intermediate dimension to distinguish this full-tail definition from the discrete intermediate dimension of \cite{LXZ}.

The change affects the lower spectrum. The upper macroscopic intermediate dimension agrees with the  upper discrete intermediate dimension in \cite{LXZ}, while the new lower dimension can be strictly larger than the lower discrete intermediate dimension.  Proposition~\ref{prop:old-comparison} and Example~\ref{ex:separating-lower} state these comparisons precisely. The main contribution is the full-tail approach provides a unified framework in which both the lower and upper macroscopic intermediate dimensions satisfy exact projection formulae.

For an integer $n\ge0$, define the dyadic shells of $\R^d$ by
\[
 S_0=B(0,1),\qquad S_n=B(0,2^n)\setminus B(0,2^{n-1})\quad(n\ge1).
\]
Throughout the paper, all balls are open. For $E\subset\R^d$, an integer $N\ge1$, $s\ge0$, and
$0\le\theta\le1$, define $\mathcal{H}_N^{s,\theta}(E)=\sum_{n=N}^\infty  \mathcal{H}_{n,N}^{s,\theta}(E)$, where
\[
 \begin{aligned}
 \mathcal{H}_{n,N}^{s,\theta}(E)&=
 \inf\left\{\sum_i\left(\frac{r_i}{2^n}\right)^s:
 \begin{array}{l}E\cap S_n\subset\bigcup_iB(x_i,r_i),\\
 x_i\in S_n,\quad 1\le r_i\le2^{n-N\theta}
 \end{array}\right\}.
 \end{aligned}
\]
We define the \emph{lower} and \emph{upper macroscopic intermediate dimensions} by
\begin{align}
 \Dt^-_\theta(E)&=\inf\left\{s\ge0:
 \liminf_{N\to\infty}\frac{\log\mathcal{H}_N^{s,\theta}(E)}{N\log2}\le0\right\},
 \label{eq:def-minus}\\
 \Dt^+_\theta(E)&=\inf\left\{s\ge0:
 \limsup_{N\to\infty}\frac{\log\mathcal{H}_N^{s,\theta}(E)}{N\log2}\le0\right\}.
 \label{eq:def-plus}
\end{align}
The content of an empty shell is zero, and the full-tail sum is allowed to be infinite. Every shell has a finite unit-ball cover, so its local content is finite. Covers may be taken finite: for fixed $n,N,s$, each admissible ball contributes at least $2^{-ns}$, and an infinite cover has infinite cost. We use $\log0=-\infty$ and $\log(+\infty)=+\infty$.

%Section~\ref{sec:basic} proves the endpoint identities, monotonicity, continuity at positive parameters, and quasi-isometry invariance. 

To state the projection theorem, we introduce lower and upper macroscopic intermediate dimension profiles, which in motivated by the work of  \cite{BFF}. 

For $R\ge1$, an integer $1\le m\le d$, and $0<s<m$, define
\begin{equation}
 \begin{split}
 K_R^{s,m}(z)&=
 \begin{cases}
 1,&|z|\le1,\\
 |z|^{-s},&1<|z|\le R,\\
 R^{m-s}|z|^{-m},&|z|>R,
 \end{cases}
  \end{split}
 \label{eq:kernel}
\end{equation}
 
We define the capacity of $A$ by
\[
 C_R^{s,m}(A)=
 \left(\inf_{\mu\in\Pf(A)}
 \iint K_R^{s,m}(x-y)\,d\mu(x)\,d\mu(y)\right)^{-1},
\]
where $\Pf(A)$ consists of finitely supported probability measures on $A$;
$C_R^{s,m}(\varnothing)=0$. The integral is the $K_R^{s,m}$-energy of $\mu$. For bounded nonempty $A$, the kernel is bounded below by a positive constant on $A-A$, so $0<C_R^{s,m}(A)<\infty$. For an arbitrary set $E\subset\R^d$, define
\[
 \Ct_N^{s,\theta,m}(E)=
 \sum_{n=N}^\infty 2^{-ns}C_{2^{n-N\theta}}^{s,m}(E\cap S_n),\quad N\ge 1.
\]
The \emph{lower} and \emph{upper macroscopic intermediate dimension profiles} are, respectively, defined by
\[
\begin{aligned}
 \lD_\theta^m(E)
 &=\inf\left(\left\{s\in(0,m):
 \liminf_{N\to\infty}\frac{\log\Ct_N^{s,\theta,m}(E)}{N\log2}
 \le0\right\}\cup\{m\}\right),\\
 \uD_\theta^m(E)
 &=\inf\left(\left\{s\in(0,m):
 \limsup_{N\to\infty}\frac{\log\Ct_N^{s,\theta,m}(E)}{N\log2}
 \le0\right\}\cup\{m\}\right).
\end{aligned}
\]
Thus $0\le\lD_\theta^m(E)\le\uD_\theta^m(E)\le m$. These quantities are defined from the shell pieces of $E$ without using any projection. Bounded sets have both profiles zero.
When the two cases are treated simultaneously, we use the shorthand
$\Dpm_\theta^{-,m}(E)=\lD_\theta^m(E)$ and
$\Dpm_\theta^{+,m}(E)=\uD_\theta^m(E)$.

Fix integers $1\le m<d$. Let $G(d,m)$ be the Grassmannian of
$m$-dimensional linear subspaces of $\R^d$, equipped with its
rotation-invariant probability measure $\gamma_{d,m}$, see \BlueChange{\cite{Mattila}}. For $V\in G(d,m)$,
let $P_V:\R^d\to V$ be orthogonal projection. We regard $V$ as a Euclidean
space with its induced metric. In the case that $d=2$ and $m=1$, we also use $P_\alpha(x_1,x_2)=x_1\cos\alpha+x_2\sin\alpha$, $0\le\alpha<\pi$,
and  $\gamma_{2,1}$ corresponds to normalized Lebesgue measure in $\alpha$. 

The target set $P_VE$ is covered using its own dyadic shells in $V$; source shell labels are not retained. The following is the main result.
\begin{theorem}\label{thm:projection}
Let $E\subset\R^d$ be an unbounded Borel set and $m\in \{1,\ldots,d-1\}$.
There is a set $\mathcal G_{E,m}\subset G(d,m)$ of full
$\gamma_{d,m}$-measure such that
\begin{equation}
 \Dt^-_\theta(P_VE)=\lD_\theta^m(E),\qquad
 \Dt^+_\theta(P_VE)=\uD_\theta^m(E)
 \label{eq:projection}
\end{equation}
for every $V\in\mathcal G_{E,m}$ and every $\theta\in[0,1]$. The two equalities are separate: the lower and upper profile values need not coincide.
\end{theorem}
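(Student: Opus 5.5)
We follow the Burrell--Falconer--Fraser strategy for intermediate dimensions and adapt it to the full-tail shell sums. The argument has three steps: a covering--capacity comparison on $V$, an upper bound for projections valid for every $V$ up to a small exponent loss, and a lower bound for almost every $V$ obtained from fractional moments.

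\emph{Step 1: covering--capacity comparison.} For a bounded set $A\subset S_n$ (in $V\cong\R^m$), radius $R=2^{n-N\theta}$ and $0<s<m$, the aim is to compare $\sum_i (r_i/2^n)^s$ over admissible covers with $2^{-ns}C_R^{s,m}(A)$. The upper bound $2^{-ns}C^{s,m}_R(A)\lesssim \Ht_{n,N}^{s,\theta}(A)$ comes from the standard argument: given a cover and $\mu\in\Pf(A)$, some ball carries mass $\mu(B_i)\gtrsim r_i^s/\sum_j r_j^s$, and the truncated kernel $K_R^{s,m}\ge r_i^{-s}$ on that ball. In the other direction, in $\R^m$ the kernel $K^{s,m}_R$ has the equilibrium property that points of small potential yield covers. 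So we expect, for any $\epsilon>0$,
\[
\Ht^{s,\theta}_{n,N}(A)\lesssim 2^{\epsilon n}\,2^{-ns}C^{s,m}_R(A)
\]
(up to $\log R$ factors). These factors contribute $N^{O(1)}$ or $2^{\epsilon n}$, and summing over $n\ge N$ is handled by the exponent gap: replace $s$ by $s\pm\delta$, which changes each shell term by $2^{\mp\delta n}$ and absorbs the polynomial losses.

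\emph{Step 2: upper bound, for all $V$.} Fix $s>\Dpm^{\pm,m}_\theta(E)$. Target shell $T_k\subset V$ receives points from source shells $S_n$ with $n\ge k$ only, since $|P_Vx|\le|x|$. For each source shell take a near-equilibrium cover bound; projection is $1$-Lipschitz and $K^{s,m}_{R}(P_Vz)\ge K^{s,m}_R(z)$, so $C_R^{s,m}(P_V(E\cap S_n))\le C^{s,m}_R(E\cap S_n)$ in the $m$-dimensional kernel. We then cover $P_V(E\cap S_n)\cap T_k$ by balls centred in $T_k$ with radii between $1$ and $2^{k-N\theta}$. A ball coming from source scale $2^{n-N\theta}$ is too large when $k<n$. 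Such balls are cut into pieces of size $2^{k-N\theta}$; since the set lies in an $m$-dimensional space, the cost $2^{(n-k)m}(2^{k-N\theta}/2^k)^s$ is controlled by the $|z|^{-m}$ tail of the kernel. Normalizing by $2^{-ks}$ instead of $2^{-ns}$, and summing over $n\ge k\ge N$, we expect a bound of the form $\Ht^{s+\delta,\theta}_N(P_VE)\lesssim \sum_n 2^{-ns}C(E\cap S_n)\cdot 2^{-\delta n}\cdot(\text{poly})$. This step requires care about how many source shells meet a fixed target shell, but the $2^{-\delta n}$ gap should make the double sum converge.

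\emph{Step 3: lower bound, for almost every $V$.} This is the main obstacle. Fix $t<s<\Dpm^{\pm,m}_\theta(E)$ with $t,s$ rational. For each $n$ choose $\mu_n\in\Pf(E\cap S_n)$ with energy about $1/C(E\cap S_n)$. The key estimate is the averaged projected-kernel bound
\[
\int_{G(d,m)}K^{t,m}_R(P_Vz)\,d\gamma_{d,m}(V)\lesssim K^{t,m}_R(z)\quad(t<m),
\]
which holds with constants independent of $R$ (a Kaufman-type computation). This gives $\int\!\iint K_R(P_Vx-P_Vy)\,d\mu_n\,d\mu_n\,d\gamma\lesssim C(E\cap S_n)^{-1}$. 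Applying Markov's inequality to fractional moments, and Borel--Cantelli along $n$ weighted by $2^{-\epsilon n}$, we obtain that for a.e.\ $V$ and all large $n$ the projected capacities are at least $2^{-\epsilon n}C(E\cap S_n)$, and by Step 1 the corresponding covering contents are also large. The difficulty is that a target shell $T_k$ mixes many source shells, and mass of $P_V\mu_n$ may land in $T_k$ with $k\ll n$. The plan to handle this is capacity-weighted retention. For typical $V$, $|P_Vx|\ge 2^{-\epsilon n}|x|$ for most of the $\mu_n$-mass, because the measure of the set of $V$ with $|P_V x|\le\eta|x|$ is $\lesssim\eta^{m}$. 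So only $O(\epsilon n)$ target shells receive the retained mass. Pigeonholing, one of them retains at least a $1/(\epsilon n)$ fraction of the energy-efficient mass, and the lower bound then transfers shell by shell with $2^{O(\epsilon n)}$ loss, which the exponent gap absorbs.

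For the $\liminf$ (lower) version we need the bound for all large $N$ simultaneously. This works because the Borel--Cantelli event is indexed by $n$, not by $N$. For the $\limsup$ version, infinitely many good $N$ suffice. Finally we take $\mathcal G_{E,m}$ to be the countable intersection over rational $s,t,\epsilon$. Uniformity in $\theta$ comes from the fact that the good events are indexed by source shell and kernel scale, together with monotonicity and continuity of both sides in $\theta$, which lets us pass from rational $\theta$ to all $\theta\in[0,1]$. At $\theta=0$ continuity may fail, so that endpoint is treated directly, since $R=2^n$ there.
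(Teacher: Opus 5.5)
Your Step~2 fails: the upper bound does not hold for individual $V$. Take $d=2$, $m=1$, let $(a_n)_{n\ge2}$ enumerate $\Z$ with $|a_n|\le n$, and put $E=\{(a_n,3\cdot2^{n-2}):n\ge2\}$.

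\begin{itemize}
\item Each $E\cap S_n$ with $n\ge2$ is a single point. Hence $\mathcal H_N^{s,\theta}(E)\lesssim2^{-Ns}$ for every $s>0$, and all source dimensions and profiles vanish.
\item The projection onto the first coordinate axis is $\Z$, whose macroscopic intermediate dimensions all equal $1$.
\end{itemize}

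The mechanism is the one your sketch passes over. A source ball of radius $r$ in $S_n$ costs $(r/2^n)^s$. Its projection may land in a target shell $T_k$ with $k\ll n$, where it costs $(r/2^k)^s$; after subdivision to radius $2^{k-N\theta}$ it carries the factor $2^{(n-k)m}$ you computed. Moreover, unboundedly many source shells can feed one target shell. For a fixed $V$ nothing pays for this; only averaging over $V$ does.

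This is why the paper's upper bound is an almost-sure statement. For $x\in S_n$ and $r\le2^j$ one has $\gamma_{d,m}\{V:B(P_Vx,r)\cap T_j\ne\varnothing\}\lesssim2^{-m(n-j)}$. Directions in which some projected centre lies within $2r$ of the origin are discarded; their total measure is $\lesssim2^{-N\theta(m-t)}G_N$, and this removes projected balls containing the origin that would meet many target shells. The fractional moment of order $p=t/s<1$ of the resulting target cover cost is then bounded by the mixed source cost $G_N=\sum_{n,i}2^{-nt}\psi_{R_n}^{t,m}(r_{n,i})$ (Lemma~\ref{lem:moment}). Markov's inequality and Borel--Cantelli are then applied over the outer scales $N$.

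The second gap is the size of your losses. Changing the exponent does not rescale a shell term by $2^{\mp\delta n}$. From $K_R^{u,m}\le R^{v-u}K_R^{v,m}$ with $R=2^{n-N\theta}$ one gets only $2^{-nv}C_R^{v,m}(A)\le2^{-N\theta(v-u)}2^{-nu}C_R^{u,m}(A)$. For shells of near-maximal capacity, such as $\Z^d\cap S_n$, this is sharp up to constants, and covering sums behave the same way. So the exponent gap gains a factor depending only on the outer scale $N$.

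Consequently, every loss in your plan that grows with the source index $n$ is fatal whenever $\Ct_N^{s,\theta,m}(E)$ is carried by shells with $n\gg N$. This covers the $\log R$ or $2^{\epsilon n}$ of Step~1, the $2^{\epsilon n}$ thresholds forced by Borel--Cantelli along $n$, retention at $2^{-\epsilon n}|x|$, and the $1/(\epsilon n)$ pigeonhole of Step~3. For instance, take $E=\bigcup_k(\Z^d\cap S_{n_k})$ with $n_{k+1}=2^{n_k}$ and $N=n_k+1$. Every shell contributing to $\Ct_N^{s,\theta,m}(E)$ then has $n\ge2^{N-1}$, and the lower-sign case needs all large $N$.

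The paper keeps every loss polynomial in $N$:
\begin{itemize}
\item In Lemma~\ref{lem:mixed}, the gap $u<v$ makes the logarithm count only the $O(N\theta)$ dyadic scales between $R$ and $2^n$.
\item In the lower bound, Borel--Cantelli runs over $N$, and mass is retained in $|y|\ge2^{n-L_N}$ with $L_N=\lceil4\log_2(N+1)\rceil$.
\item Target contents are taken at outer scale $M_N=N-L_N$. Each target shell lies in at most $L_N$ windows $[n-L_N+1,n]$, which also controls the overlap of different source shells that your pigeonhole ignores.
\item Bad source shells are controlled through the capacity-weighted fraction $Z_N(V)=B_N^{-1}\sum_{n\ \text{bad}}b_{N,n}$. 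Its integral is $O((N+1)^{-4})$ however many and however distant the shells in $I_N$ are.
\end{itemize}
A per-shell Borel--Cantelli cannot replace this, since summability forces thresholds growing with $n$.

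A minor point: when $|z|\gg R$, $\int K_R^{t,m}(P_Vz)\,d\gamma_{d,m}(V)$ exceeds $K_R^{t,m}(z)$ by a factor $\asymp\log(|z|/R)$. This is only $O(N)$ within a shell, and the paper avoids it with the truncated kernels $Q_\ell^s$.
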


The macroscopic Hausdorff dimension can be written as
\begin{equation}
	\hDim E=\inf\left\{s\ge0:\sum_{n\ge1}\mathcal{H}_n^s(E)<\infty\right\},
	\label{eq:macro}
\end{equation}
where  $\mathcal{H}_n^s(E)=\mathcal{H}_{n,n}^{s,0}(E)$, see 
\cite{BT92,DS}.
 Propositions~\ref{prop:basic-endpoints} and~\ref{prop:recovery} give
\[
	\Dt^-_0(E)=\Dt^+_0(E)=\hDim E,\quad
	\lD_0^m(E)=\uD_0^m(E)=\min\{\hDim E,m\}.
	\]
For $(d,m)=(2,1)$ and $\theta=0$, Theorem~\ref{thm:projection} recovers the following theorem of Daw and Seuret \cite[Theorem~2.8]{DS}: 
\begin{corollary}\label{rem:DS-endpoint}For every Borel set $E\subset \R^2$,
	\[
	\hDim(P_\alpha E)=\min\{\hDim E,1\}
	\]
for Lebesgue almost every $\alpha\in[0,\pi)$.
\end{corollary}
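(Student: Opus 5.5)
We deduce the corollary from Theorem~\ref{thm:projection} at $d=2$, $m=1$, $\theta=0$, together with the endpoint identities of Propositions~\ref{prop:basic-endpoints} and~\ref{prop:recovery} quoted above. We first set aside the bounded case, which the theorem excludes, and then match the projections $P_\alpha$ with the Grassmannian projections $P_V$.

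\emph{Bounded $E$.} Suppose $E$ is bounded. Then $E\cap S_n=\varnothing$ for all large $n$, so only finitely many terms of the series in \eqref{eq:macro} are nonzero. Each term is finite, so the series converges for every $s\ge0$ and $\hDim E=0$. For every $\alpha$, the set $P_\alpha E$ is again bounded, because $|P_\alpha x|\le|x|$. The same argument gives $\hDim P_\alpha E=0=\min\{\hDim E,1\}$ for every $\alpha$.

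\emph{Unbounded $E$: identifying the projections.} For $\alpha\in[0,\pi)$ let $e_\alpha=(\cos\alpha,\sin\alpha)$ and $V_\alpha=\R e_\alpha\in G(2,1)$. Then $P_{V_\alpha}x=P_\alpha(x)\,e_\alpha$. The map $t\mapsto te_\alpha$ is a linear isometry from $\R$ onto $V_\alpha$ that fixes the origin. It therefore carries the dyadic shells of $\R$ onto the dyadic shells of $V_\alpha$ and balls onto balls of the same radius. Hence every quantity $\mathcal H^{s,\theta}_{n,N}$ takes the same value for $P_\alpha E\subset\R$ and for $P_{V_\alpha}E\subset V_\alpha$, which gives $\Dt^\pm_\theta(P_\alpha E)=\Dt^\pm_\theta(P_{V_\alpha}E)$. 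The map $\alpha\mapsto V_\alpha$ is a bijection from $[0,\pi)$ onto $G(2,1)$. As stated in the setup, it pushes normalized Lebesgue measure forward to $\gamma_{2,1}$. So the set $\{\alpha: V_\alpha\in\mathcal G_{E,1}\}$ has full Lebesgue measure.

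\emph{Applying the theorem and the endpoint identities.} Take $\alpha$ with $V_\alpha\in\mathcal G_{E,1}$. Theorem~\ref{thm:projection} with $\theta=0$ gives $\Dt^-_0(P_{V_\alpha}E)=\lD_0^1(E)$. Proposition~\ref{prop:basic-endpoints}, applied in the ambient space $\R$ (or equivalently in $V_\alpha$), gives $\Dt^-_0(P_\alpha E)=\hDim P_\alpha E$. Proposition~\ref{prop:recovery} gives $\lD_0^1(E)=\min\{\hDim E,1\}$. Chaining these identities yields $\hDim(P_\alpha E)=\min\{\hDim E,1\}$ for almost every $\alpha$. Using the upper dimensions instead gives the same conclusion.

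There is no real obstacle once Theorem~\ref{thm:projection} is available. The only points needing care are the ones handled above: checking that Proposition~\ref{prop:basic-endpoints} holds in every ambient dimension, in particular on the line $V_\alpha$, and confirming that the identification $\R\cong V_\alpha$ respects the shells centred at the origin.
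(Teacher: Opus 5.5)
Your proposal is correct and follows the paper's own route: you specialize Theorem~\ref{thm:projection} to $(d,m)=(2,1)$ and $\theta=0$, then convert both sides using Propositions~\ref{prop:basic-endpoints} and~\ref{prop:recovery}. Your separate treatment of bounded $E$, which Theorem~\ref{thm:projection} excludes, and your explicit identification of $P_\alpha$ with $P_{V_\alpha}$ and of Lebesgue measure with $\gamma_{2,1}$, supply details the paper leaves implicit.
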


Theorem~\ref{thm:projection} identifies the two typical projection dimensions with their respective intrinsic capacity profiles. Its proof has two parts. For the upper estimate, a mixed covering functional measures each large source ball in terms of the number and size of balls needed to cover its projection. A gap between two test exponents makes the covering--capacity loss depend on the outer scale rather than on a possibly much larger source shell index. A fractional-moment estimate then controls the complete target tail. For the lower estimate, source shells are retained according to their capacity weights. Neither argument bounds the number of contributing source shells in terms of the outer scale.

%Discrete Moran sets furnish explicit applications \cite{DYX,LCX,YDX}. We treat a bounded-base vector-digit model, including stationary self-similar digit sets, and prove its macroscopic intermediate dimension formula directly. Further applications concern subsets of Lipschitz graphs and arithmetic sumsets. These applications follow from the general projection theorem and do not replace it as the main result.

Throughout this paper, the notation $f\lesssim g$ indicates that there
exists a constant $C>0$ such that $f\le Cg$, and $f\gtrsim g$ is defined
analogously. We write $f\asymp g$ if $f\gtrsim g$ and $f\lesssim g$.

The rest of the paper is organized as follows. Section~\ref{sec:basic} proves the basic properties of macroscopic intermediate dimensions. Sections~\ref{sec:capacity} and~\ref{sec:projection}
prove the capacity and projection results.
Section~\ref{sec:applications} gives exact cases, including discrete Moran sets, and applications.

\section{Basic properties of macroscopic intermediate dimensions}\label{sec:basic}
We begin with equivalent characterizations of the macroscopic
intermediate dimensions and use them to establish their basic properties.
\subsection{Equivalent definitions and range }
\begin{lemma}\label{lem:zero-limit}
For $0<\theta\le 1$ and $E\subset \R^d$,
\[
 \Dt^-_\theta(E)=\inf\left\{s\ge0:
 \liminf_{N\to\infty}\mathcal H_N^{s,\theta}(E)=0\right\}
\]
and
\[
 \Dt^+_\theta(E)=\inf\left\{s\ge0:
 \limsup_{N\to\infty}\mathcal H_N^{s,\theta}(E)=0\right\}.
\]
\end{lemma}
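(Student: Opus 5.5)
We compare the two thresholds directly. For each shell content we use the scaling relation
\[
 \mathcal H_{n,N}^{s+\varepsilon,\theta}(E)\le 2^{-N\theta\varepsilon}\,\mathcal H_{n,N}^{s,\theta}(E),
\]
which holds for $\varepsilon>0$ and $n\ge N$. Indeed, every admissible ball in the definition of $\mathcal H_{n,N}^{s,\theta}$ has $r_i\le 2^{n-N\theta}$, so $(r_i/2^n)^{s+\varepsilon}\le (r_i/2^n)^s\,2^{-N\theta\varepsilon}$. Since the admissible covers do not depend on $s$, taking infima over the same covers and summing over $n\ge N$ gives
\[
 \mathcal H_N^{s+\varepsilon,\theta}(E)\le 2^{-N\theta\varepsilon}\mathcal H_N^{s,\theta}(E).
\]
This is the only place where $\theta>0$ enters, and it is exactly what fails at $\theta=0$.

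Write $\Dt^-_\theta(E)$ for the threshold defined by \eqref{eq:def-minus} and $\underline d$ for the threshold defined by the liminf condition in the lemma.

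First we show $\Dt^-_\theta(E)\le\underline d$. Suppose $\liminf_N\mathcal H_N^{s,\theta}(E)=0$. Then along a subsequence $\mathcal H_N^{s,\theta}(E)<1$, so $\log\mathcal H_N^{s,\theta}(E)<0$ along that subsequence (this includes the value $-\infty$ when the content vanishes). Hence $\liminf_N\log\mathcal H_N^{s,\theta}(E)/(N\log2)\le0$. Every $s$ admissible for the lemma's condition is therefore admissible for \eqref{eq:def-minus}, and the inequality between the infima follows.

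Next we show $\underline d\le\Dt^-_\theta(E)$. Take $s$ admissible in \eqref{eq:def-minus} and fix $\varepsilon>0$. The liminf condition gives infinitely many $N$ with $\mathcal H_N^{s,\theta}(E)\le 2^{N\theta\varepsilon/2}$. For these $N$ the scaling relation gives
\[
 \mathcal H_N^{s+\varepsilon,\theta}(E)\le 2^{-N\theta\varepsilon/2}\to0 .
\]
So $s+\varepsilon$ satisfies the lemma's condition, hence $\underline d\le s+\varepsilon$. Letting $\varepsilon\to0$ and then taking the infimum over admissible $s$ yields $\underline d\le\Dt^-_\theta(E)$.

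The upper case is identical with $\liminf$ replaced by $\limsup$. For the first inclusion, $\limsup_N\mathcal H_N^{s,\theta}(E)=0$ forces $\mathcal H_N^{s,\theta}(E)<1$ for all large $N$, so $\limsup_N\log\mathcal H_N^{s,\theta}(E)/(N\log2)\le0$. For the second, the limsup condition gives $\mathcal H_N^{s,\theta}(E)\le 2^{N\theta\varepsilon/2}$ for all large $N$, and the scaling relation then sends $\mathcal H_N^{s+\varepsilon,\theta}(E)$ to $0$ along the full sequence.

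Two conventions need checking. A value $+\infty$ of the content causes no trouble: it only makes $s$ inadmissible in both definitions. If the admissible sets are empty, both infima are $+\infty$, so the identity still holds. There is no genuinely hard step here; the one point that needs care is that the scaling relation uses only the cap $r_i\le 2^{n-N\theta}$, which is uniform in $n$, and not the lower bound $r_i\ge1$.
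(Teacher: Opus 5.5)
Your proof is correct and follows essentially the same route as the paper. In both, the scaling bound $\mathcal H_N^{s+\varepsilon,\theta}(E)\le 2^{-N\theta\varepsilon}\mathcal H_N^{s,\theta}(E)$ turns subexponential growth at $s$ into decay to zero at $s+\varepsilon$, and the converse inclusion follows directly. Your version is slightly leaner: you handle the easy direction without the intermediate exponent the paper uses, and in the other direction you work with an arbitrary admissible $s$, so you never need the fact, used tacitly in the paper, that every $t>\Dt^-_\theta(E)$ is admissible.
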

\begin{proof}
We only establish the first equality, as the second one can be verified in a similar way.
For $s>t\ge0$, it follows from the definition that
$\mathcal H_{n,N}^{s,\theta}(E)
\le2^{-N\theta(s-t)}\mathcal H_{n,N}^{t,\theta}(E)$. Hence
\begin{equation}
 \mathcal H_N^{s,\theta}(E)
 \le2^{-N\theta(s-t)}\mathcal H_N^{t,\theta}(E).
 \label{eq:strict-cover}
\end{equation} 
Write $\alpha^-=\inf\left\{s\ge0:
\liminf_{N\to\infty}\mathcal H_N^{s,\theta}(E)=0\right\}.$
If $s>\alpha^-$, choose $t$ with $\alpha^-<t<s$ such that
$\liminf_{N\to\infty}\mathcal H_N^{t,\theta}(E)=0$.  It follows from
\eqref{eq:strict-cover} that there is an
increasing sequence $N_k\to\infty$ for which
$\mathcal H_{N_k}^{s,\theta}(E)\to0$. Consequently
\[
 \liminf_{N\to\infty}
 \frac{\log\mathcal H_N^{s,\theta}(E)}{N\log2}\le0,
\]
and therefore $\Dt^-_\theta(E)\le s$. Letting $s\downarrow\alpha^-$ gives
$\Dt^-_\theta(E)\le\alpha^-$.

On the other hand, let $s>t>\Dt^-_\theta(E)$. Then
\[
\liminf_{N\to\infty}\frac{\log\mathcal{H}_N^{t,\theta}(E)}{N\log2}\le 0.
\]
Choose
$\varepsilon<\theta(s-t)$. There exists an increasing sequence $N_k\to \infty$ such that 
$\mathcal {H}_{N_k}^{t,\theta}(E)\le2^{\varepsilon N_k}$, so it follows from \eqref{eq:strict-cover} that
$\mathcal H_{N_k}^{s,\theta}(E)\to0$ and hence
$\liminf_{N\to\infty}\mathcal H_N^{s,\theta}(E)=0$. Thus
$\alpha^-\le s$. Letting first $s\downarrow t$ and then
$t\downarrow\Dt^-_\theta(E)$ gives $\alpha^-\le\Dt^-_\theta(E)$.
\end{proof}

\begin{proposition}\label{prop:single}
	For every $E\subset\R^d$ and $0\le\theta\le1$,
	\begin{equation}
		\Dt^+_\theta(E)=\inf\left\{s\ge0:
		\sum_{n\ge1}\mathcal H_{n,n}^{s,\theta}(E)<\infty\right\}.
		\label{eq:single-series}
	\end{equation}
	For $\theta>0$, we also have
	\begin{equation}
		\Dt^+_\theta(E)=\inf\left\{s\ge0:\limsup_{n\to\infty}
		\frac{\log\mathcal H_{n,n}^{s,\theta}(E)}{n\log2}\le0 \right\}.
		\label{eq:tddy}
	\end{equation}
\end{proposition}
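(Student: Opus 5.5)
The plan is to compare the full-tail functional $\mathcal H_N^{s,\theta}(E)=\sum_{n\ge N}\mathcal H_{n,N}^{s,\theta}(E)$ with the diagonal terms $a_n(s):=\mathcal H_{n,n}^{s,\theta}(E)$, using only two elementary facts. First, for $n\ge N$ the radius cap satisfies $2^{n-N\theta}\ge 2^{n-n\theta}$. Every cover admissible for $a_n(s)$ is therefore admissible for $\mathcal H_{n,N}^{s,\theta}(E)$, which gives $\mathcal H_{n,N}^{s,\theta}(E)\le a_n(s)$. Second, all admissible radii satisfy $1\le r_i\le 2^{n-N\theta}\le 2^n$. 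Hence $s\mapsto\mathcal H_{n,N}^{s,\theta}(E)$ is nonincreasing, and the estimate behind \eqref{eq:strict-cover} gives $a_n(t)\le 2^{-n\theta(t-s)}a_n(s)$ for $t>s$. Write $\alpha$ for the right-hand side of \eqref{eq:single-series}.

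\emph{Step 1: $\Dt^+_\theta(E)\le\alpha$.} Let $s>\alpha$, and choose $s'\in(\alpha,s]$ with $\sum_n a_n(s')<\infty$. By the two facts,
\[
\mathcal H_N^{s,\theta}(E)\le\sum_{n\ge N}a_n(s)\le\sum_{n\ge N}a_n(s').
\]
The right-hand side is the tail of a convergent series, so it tends to $0$. Thus $\limsup_N \log\mathcal H_N^{s,\theta}(E)/(N\log2)\le0$, and hence $\Dt^+_\theta(E)\le s$. This step works for every $\theta\in[0,1]$.

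\emph{Step 2: $\alpha\le\Dt^+_\theta(E)$.} Here I would separate the cases $\theta=0$ and $\theta>0$.

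If $\theta=0$, the cap $2^{n}$ does not depend on $N$, so $\mathcal H_{n,N}^{s,0}=a_n(s)$ and $\mathcal H_N^{s,0}(E)=\sum_{n\ge N}a_n(s)$ is exactly a tail of the series. If $s>\Dt^+_0(E)$, the limsup condition (valid for $s$ by monotonicity in $s$) forces this tail to be finite for some $N$. All earlier terms are finite, since each shell has a finite unit-ball cover, so the whole series converges and $\alpha\le s$.

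If $\theta>0$, take $s>\Dt^+_\theta(E)$ and $\varepsilon>0$. For large $N$ we have $\mathcal H_N^{s,\theta}(E)\le 2^{\varepsilon N}$, and taking $N=n$ gives $a_n(s)\le\mathcal H_n^{s,\theta}(E)\le 2^{\varepsilon n}$. For $t>s$ this yields $a_n(t)\le 2^{-n(\theta(t-s)-\varepsilon)}$. Choosing $\varepsilon<\theta(t-s)$ makes the series summable, so $\alpha\le t$. Letting $t\downarrow s\downarrow\Dt^+_\theta(E)$ completes Step 2.

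\emph{Step 3: formula \eqref{eq:tddy} for $\theta>0$.} Let $\beta$ denote its right-hand side. If $\sum_n a_n(s)<\infty$, the terms are bounded, so $\limsup_n\log a_n(s)/(n\log2)\le0$; this gives $\beta\le\alpha$. Conversely, if the limsup condition holds at $s$ and $t>s$, then
\[
a_n(t)\le 2^{-n\theta(t-s)}a_n(s)\le 2^{-n\theta(t-s)/2}
\]
for large $n$. This is summable, so $\alpha\le t$, and hence $\alpha\le\beta$.

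The only delicate point is Step 2 when $\theta=0$. There the exponent gain $2^{-n\theta(t-s)}$ disappears, and one has to notice that finiteness of a single tail already gives convergence. Everywhere else, the only care needed is using monotonicity in $s$ to pass between the threshold sets.
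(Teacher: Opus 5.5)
Your proof is correct and follows essentially the same route as the paper: the admissibility comparison $\mathcal H_{N,N}^{s,\theta}(E)\le\mathcal H_N^{s,\theta}(E)\le\sum_{n\ge N}\mathcal H_{n,n}^{s,\theta}(E)$, combined with the exponent gap $2^{-n\theta(t-s)}$ when $\theta>0$. The differences are cosmetic. You handle $\theta=0$ directly (a finite tail plus finitely many finite shell contents gives convergence) instead of identifying both sides with $\hDim E$ via Proposition~\ref{prop:basic-endpoints}. You also compare the right-hand side of \eqref{eq:tddy} with that of \eqref{eq:single-series}, rather than with $\Dt^+_\theta(E)$.
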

\begin{proof}
For $n\ge N$, $2^{(1-\theta)n}\le 2^{n-N\theta}.$
Hence every cover admissible for $\mathcal H_{n,n}^{s,\theta}(E)$ is also admissible for $\mathcal H_{n,N}^{s,\theta}(E)$, and therefore 
	\begin{equation}
		\mathcal{H}_{N,N}^{s,\theta}(E)\le \mathcal{H}_N^{s,\theta}(E)=\sum_{n=N}^\infty \mathcal{H}_{n,N}^{s,\theta}(E)\le\sum_{n= N}^\infty\mathcal H_{n,n}^{s,\theta}(E).
		\label{eq:single-tail-comparison}
	\end{equation}

Let $A$ denote the right-hand side of
	\eqref{eq:single-series}.
	Suppose first that $\sum_{n=1}^\infty \mathcal H_{n,n}^{s,\theta}(E)<\infty$. Its tails tend to zero, so
	\eqref{eq:single-tail-comparison} gives
	\[
	\limsup_{N\to\infty}
	\frac{\log\mathcal H_N^{s,\theta}(E)}{N\log2}\le0.
	\]
	This implies that $\Dt^+_\theta(E)\le A$.
	Conversely,  let $t>\Dt^+_\theta(E)$. Then the last limsup, with $t$ in
	place of $s$, is nonpositive. The left inequality in
	\eqref{eq:single-tail-comparison} gives
\begin{equation}\label{inequality}
	\limsup_{n\to\infty}\frac{\log\mathcal H_{n,n}^{t,\theta}(E)}{n\log2}\le0.
\end{equation}
	If $s>t$ and $\theta>0$, every ball occurring in the definition of
	$\mathcal H_{n,n}^{t,\theta}(E)$ satisfies $r_i/2^n\le2^{-n\theta}$, whence
\begin{equation}\label{eq:single-exponent-gap}
		\mathcal H_{n,n}^{s,\theta}(E)\le2^{-n\theta(s-t)}\mathcal H_{n,n}^{t,\theta}(E).
	\end{equation}
	Choose $0<\varepsilon<\theta(s-t)$. \eqref{inequality} implies
	$\mathcal H_{n,n}^{t,\theta}(E)\le2^{\varepsilon n}$ for all sufficiently large $n$, and
	the inequality \eqref{eq:single-exponent-gap} then shows that
	$\sum_{n=1}^\infty\mathcal H_{n,n}^{s,\theta}(E)<\infty$. Hence
	$A\le s$ for every $s>t$. Letting first $s\downarrow t$ and then
	$t\downarrow\Dt^+_\theta(E)$ we obtain the reverse inequality $A\le \Dt^+_\theta(E)$.
	This proves \eqref{eq:single-series} for $\theta>0$. When $\theta=0$,
	both sides of \eqref{eq:single-series} equal the critical exponent of
	the series in \eqref{eq:macro}, namely $\hDim E$.
	
	Let $B$ denote the right-hand side of
	\eqref{eq:tddy}. Since $\mathcal H_{N,N}^{s,\theta}(E)\le\mathcal H_N^{s,\theta}(E)$, it follows from \eqref{eq:def-plus} that
	$B\le\Dt^+_\theta(E)$. Conversely, let $t>B$. The  inequality \eqref{eq:single-exponent-gap} implies convergence of
	$\sum_n\mathcal H_{n,n}^{s,\theta}(E)$ for every $s>t$, so \eqref{eq:single-series} gives
	$\Dt^+_\theta(E)\le t$ and hence $\Dt^+_\theta(E)\le B$. 
\end{proof}

\begin{proposition}\label{prop:basic-endpoints}
For every $E\subset\R^d$ and $0\le\theta\le1$,
\begin{equation}
 \Dt^-_0(E)=\Dt^+_0(E)=\hDim E,
 \qquad
 \hDim E\le\Dt^-_\theta(E)\le\Dt^+_\theta(E)\le d.
 \label{eq:basic-endpoints}
\end{equation}
Bounded sets have both dimensions zero. The zero-limit characterization
in Lemma~\ref{lem:zero-limit} also holds at $\theta=0$.
\end{proposition}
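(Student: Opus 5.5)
We prove the statements in the order: the endpoint identity at $\theta=0$, then the zero-limit characterization at $\theta=0$, then the chain of inequalities, then bounded sets.

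\emph{Endpoint $\theta=0$.} Here the admissible radii satisfy $1\le r_i\le 2^n$, so the constraint no longer depends on $N$. Hence $\mathcal H_{n,N}^{s,0}(E)=\mathcal H_{n,n}^{s,0}(E)=\mathcal H_n^s(E)$ for every $n\ge N$, and
\[
\mathcal H_N^{s,0}(E)=\sum_{n\ge N}\mathcal H_n^s(E)
\]
is the tail of the series in \eqref{eq:macro}. If $s>\hDim E$, the series converges, its tails tend to $0$, and both the liminf and the limsup of $\log\mathcal H_N^{s,0}(E)/(N\log2)$ are $\le0$. Thus $\Dt^+_0(E)\le\hDim E$.

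For the converse we show $\Dt^-_0(E)\ge\hDim E$. If $s<\hDim E$, the series $\sum_n\mathcal H_n^s(E)$ diverges. Since every tail of a divergent series of nonnegative terms is $+\infty$, we get $\mathcal H_N^{s,0}(E)=+\infty$ for every $N$, so $\log\mathcal H_N^{s,0}(E)/(N\log 2)=+\infty$ and $s$ is not admissible even for the liminf. This gives $\Dt^-_0(E)\ge\hDim E$.

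Together with the trivial inequality $\Dt^-\le\Dt^+$, we obtain $\Dt^-_0(E)=\Dt^+_0(E)=\hDim E$. The same dichotomy yields the zero-limit characterization at $\theta=0$: for $s>\hDim E$ the tails $\mathcal H_N^{s,0}(E)$ tend to $0$, and for $s<\hDim E$ they are identically $+\infty$.

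\emph{Lower bound.} To prove $\hDim E\le\Dt^-_\theta(E)$, we compare covers. Any cover admissible for $\mathcal H_{n,N}^{s,\theta}$ has radii $r_i\le 2^{n-N\theta}\le 2^n$, so it is also admissible for $\mathcal H_n^s$. Hence $\mathcal H_n^s(E)\le\mathcal H_{n,N}^{s,\theta}(E)$, and summing over $n\ge N$,
\[
\mathcal H_N^{s,0}(E)\le \mathcal H_N^{s,\theta}(E).
\]
Consequently, if $s<\hDim E$ then $\mathcal H_N^{s,\theta}(E)=+\infty$ for all $N$, so $s$ is not admissible and $\Dt^-_\theta(E)\ge s$.

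\emph{Upper bound.} The inequality $\Dt^-_\theta\le\Dt^+_\theta$ holds because liminf $\le$ limsup. For $\Dt^+_\theta(E)\le d$, we use unit balls: $S_n$ can be covered by $\lesssim 2^{nd}$ unit balls centred in $S_n$. Hence $\mathcal H_{n,N}^{s,\theta}(E)\lesssim 2^{n(d-s)}$. For $s>d$ this gives
\[
\mathcal H_N^{s,\theta}(E)\lesssim 2^{-N(s-d)}\to0,
\]
so $\Dt^+_\theta(E)\le s$, and letting $s\downarrow d$ gives $\Dt^+_\theta(E)\le d$.

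\emph{Bounded sets.} If $E$ is bounded, then $E\cap S_n=\varnothing$ for large $n$, so $\mathcal H_N^{s,\theta}(E)=0$ for large $N$. Since $\log0=-\infty$, every $s\ge0$ is admissible, and both dimensions are $0$.

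The only delicate point is the convention at $\theta=0$: we must use that the divergence of the series in \eqref{eq:macro} makes every tail infinite, which is what forces the lower dimension to equal $\hDim E$ rather than something smaller.
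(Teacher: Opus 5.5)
Your proof is correct and follows the paper's argument essentially step by step: you identify $\mathcal H_N^{s,0}(E)$ with the tail $\sum_{n\ge N}\mathcal H_n^s(E)$ and use that the tails of a divergent series are infinite; you compare $(N,\theta)$-covers with macroscopic covers for the lower bound, use unit-ball covers for the bound by $d$, and use emptiness of far shells for bounded sets. The only cosmetic difference is that you cover all of $S_n$ rather than a separated subset of $E\cap S_n$, which is harmless because admissible centres need only lie in $S_n$.
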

\begin{proof}
At $\theta=0$ the admissible radii in
$\mathcal H_{n,N}^{s,0}(E)$ are precisely $1\le r_i\le2^n$; hence
$
 \mathcal H_N^{s,0}(E)=\sum_{n\ge N}\mathcal H_n^s(E).
$
Let $s>\hDim E$. It follows from  \eqref{eq:macro} that $\sum_{n=1}^\infty \mathcal{H}_n^s(E)<\infty$ and hence $\mathcal H_N^{s,0}(E)\to 0$ as $N\to \infty$. This proves that $\Dt^-_0(E)\le\Dt^+_0(E)\le\hDim E$.  Conversely, if $s<\hDim E$, then the nonnegative series
$\sum_n\mathcal H_n^s(E)$ diverges, so every tail
$\mathcal H_N^{s,0}(E)$ is infinite. Hence $s$ is inadmissible for both
critical exponents. Therefore
$\Dt^-_0(E)=\Dt^+_0(E)=\hDim E$, and the zero-limit characterization
in Lemma~\ref{lem:zero-limit} also holds at $\theta=0$.

For $0\le\theta\le1$ and $n\ge N$, every admissible
$(N,\theta)$-cover is also admissible in the definition of
$\mathcal H_n^s(E)$, since $2^{n-N\theta}\le2^n$. Therefore
$
 \mathcal H_N^{s,\theta}(E)
 \ge \sum_{n\ge N}\mathcal H_n^s(E).
$
If $s<\hDim E$, the right-hand side is infinite for every $N$, so $s$
is inadmissible in either \eqref{eq:def-minus} or \eqref{eq:def-plus}.
Thus $\hDim E\le\Dt^-_\theta(E)$. 

We next prove $\Dt^+_\theta(E)\le d$. A maximal
$\tfrac12$-separated subset of $E\cap S_n$ has at most $C_d2^{nd}$
points, by comparison of the disjoint balls of radius $1/4$ about its
points with a ball of radius $2^n+1$. The corresponding unit balls,
whose centres lie in $S_n$, cover $E\cap S_n$. They are admissible for
all $n\ge N$, because $2^{n-N\theta}\ge1$. Consequently, for $s>d$,
\[
 \mathcal H_N^{s,\theta}(E)
 \le C_d\sum_{n\ge N}2^{-n(s-d)}
 \le C_{d,s}2^{-N(s-d)}.
\]
Both normalized logarithmic limits are therefore negative, and letting
$s\downarrow d$ gives $\Dt^+_\theta(E)\le d$. Finally, if $E$ is
bounded, then $E\cap S_n=\varnothing$ for all sufficiently large $n$;
hence the tail contents eventually vanish and both dimensions are
zero.
\end{proof}

\subsection{Mass dimensions and the upper endpoint}\label{sec:mass}
For a locally finite set $E\subset\R^d$, its upper and lower mass
dimensions are, respectively,
\[
\begin{aligned}
 \UMdim E&=\limsup_{R\to\infty}
 \frac{\log\max\{1,\#(E\cap B(0,R))\}}{\log R},\\
 \LMdim E&=\liminf_{R\to\infty}
 \frac{\log\max\{1,\#(E\cap B(0,R))\}}{\log R}.
\end{aligned}
\]
Let $\mathcal N_r(A)$ be the least number of radius-$r$ balls covering
$A$, with arbitrary centres. Write $M_n(E)$ for the least number of
unit balls centred in $S_n$ covering $E\cap S_n$.

The following result shows that the upper macroscopic intermediate
dimension coincides with the upper mass dimension for uniformly discrete
sets. Recall that a set $E\subset\R^d$ is called \emph{uniformly discrete}
if there exists $c>0$ such that $|x-y|\ge c$ for $x,y\in E$ with $x\ne y$.

\begin{proposition}\label{prop:mass-endpoint}
If $E\subset\R^d$ is uniformly discrete, then
$
 \Dt^+_1(E)=\UMdim E.
$
\end{proposition}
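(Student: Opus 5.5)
At $\theta=1$ the constraint $1\le r_i\le 2^{n-N}$ in $\mathcal H_{n,n}^{s,1}(E)$ forces every radius to equal $1$, so the proposition reduces to a counting comparison. The plan is to invoke \eqref{eq:tddy} of Proposition~\ref{prop:single}, which applies since $\theta=1>0$:
\[
\Dt^+_1(E)=\inf\Bigl\{s\ge0:\limsup_{n\to\infty}\tfrac{\log\mathcal H_{n,n}^{s,1}(E)}{n\log2}\le0\Bigr\}.
\]
With all radii equal to $1$ we get exactly $\mathcal H_{n,n}^{s,1}(E)=2^{-ns}M_n(E)$, where $M_n(E)$ is the least number of unit balls centred in $S_n$ covering $E\cap S_n$. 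Hence the condition on $s$ reads $\limsup_n \log M_n(E)/(n\log 2)\le s$, with the convention $\log 0=-\infty$. This gives
\[
\Dt^+_1(E)=\max\Bigl\{0,\ \limsup_{n\to\infty}\tfrac{\log\max\{1,M_n(E)\}}{n\log2}\Bigr\}.
\]
Since $M_n(E)\le C_d2^{nd}$, this value is finite. If $E$ is bounded, both sides of the proposition are $0$: a bounded uniformly discrete set is finite, and Proposition~\ref{prop:basic-endpoints} covers the left side. So we may assume $E$ is unbounded.

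Next compare $M_n(E)$ with $\#(E\cap S_n)$, using uniform discreteness with constant $c$.
\begin{itemize}
\item The unit balls centred at the points of $E\cap S_n$ are admissible and cover $E\cap S_n$, so $M_n(E)\le\#(E\cap S_n)$.
\item Each unit ball contains at most $C(c,d)$ points of $E$, by a volume comparison of the disjoint balls $B(x,c/2)$ inside a ball of radius $1+c/2$. Hence $\#(E\cap S_n)\le C(c,d)M_n(E)$.
\end{itemize}
Therefore $\limsup_n \log\max\{1,M_n\}/n$ equals $\limsup_n \log\max\{1,\#(E\cap S_n)\}/n$.

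Finally relate shell counts to ball counts. Write $a_n=\#(E\cap S_n)$ and $b_n=\#(E\cap B(0,2^n))=\sum_{k\le n}a_k$. Since $a_n\le b_n$, we have $\limsup\log\max\{1,a_n\}/n\le\limsup\log\max\{1,b_n\}/n$. For the reverse inequality, fix $s$ above the left side. Then $a_k\le C_\varepsilon 2^{k(s+\varepsilon)}$ for all $k$, so $b_n\le C'_\varepsilon (n+1)2^{n(s+\varepsilon)}$, which gives the reverse inequality after letting $\varepsilon\to0$. To pass from dyadic radii to general $R$ in $\UMdim E$, take $2^{n-1}\le R<2^n$. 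Then $b_{n-1}\le\#(E\cap B(0,R))\le b_n$ and $\log R/(n\log2)\to1$, so the limsup over $R$ equals the limsup over $2^n$. Since $\UMdim E\ge0$ automatically, the $\max\{0,\cdot\}$ is harmless.

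I expect no real obstacle here. The points needing care are the bookkeeping of empty shells (the $\max\{1,\cdot\}$ and $\log0$ conventions) and checking that \eqref{eq:tddy} is applied correctly. For the latter, the exponent-gap inequality \eqref{eq:single-exponent-gap} is vacuous-free at $\theta=1$, since there $r_i/2^n=2^{-n}$ exactly.
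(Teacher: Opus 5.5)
Your proof is correct and follows the paper's argument: at $\theta=1$ you reduce to unit-ball counts $M_n(E)$ via Proposition~\ref{prop:single}, compare $M_n(E)\le\#(E\cap S_n)\le C\,M_n(E)$ using uniform discreteness, and then pass from shell counts to dyadic ball counts and finally to arbitrary radii. The only difference is that you apply the limsup characterization \eqref{eq:tddy} directly, whereas the paper uses the series form \eqref{eq:single-series} and computes the critical exponent of $\sum_n M_n(E)2^{-ns}$; the two routes give the same value.
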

\begin{proof}
At $\theta=1$, the series in Proposition~\ref{prop:single} is
$\sum_n M_n(E)2^{-ns}$, because the only admissible radius in
$\mathcal H_{n,n}^{s,1}(E)$ is one. Its critical exponent is
\[
 L:=\limsup_{n\to\infty}
 \frac{\log\max\{1,M_n(E)\}}{n\log2}.
\]
To see this, if $s>L$, then for some $\varepsilon>0$ and all large $n$,
$M_n(E)\le2^{n(s-\varepsilon)}$, so the series converges. If $s<L$,
then $M_n(E)2^{-ns}\ge2^{\varepsilon n}$ along an infinite sequence
for some $\varepsilon>0$, and its terms do not tend to zero. Hence
Proposition~\ref{prop:single} yields $\Dt^+_1(E)=L$.

Let $a_n=\#(E\cap S_n)$. If $c>0$ is a separation constant for \BlueChange{$E$},
volume comparison of the disjoint $c/2$-balls centred at points of
$E\cap B(x,1)$ gives a constant $Q=Q(c,d)$ such that every unit ball
contains at most $Q$ points of $E$. Therefore
$
 M_n(E)\le a_n\le Q M_n(E).
$
The first inequality uses unit balls centred at the points of
$E\cap S_n$; the second applies the preceding bound to a minimal unit
cover. Thus $L$ is also the upper exponential growth rate of $a_n$.

Put $A_N=\#(E\cap B(0,2^N))=\sum_{n=0}^N a_n$. Since $a_N\le A_N$,
the upper growth rate of $a_n$ does not exceed that of $A_N$. Conversely,
if $\varepsilon>0$, then eventually
$a_n\le2^{n(L+\varepsilon)}$; summing the resulting geometric bound
(and absorbing the finitely many initial shells) gives
$A_N\le C_\varepsilon2^{N(L+\varepsilon)}$. Hence
\[
 L=\limsup_{N\to\infty}
 \frac{\log\max\{1,A_N\}}{N\log2}.
\]
For $2^{N-1}\le R<2^N$, monotonicity of the counting function places
$\#(E\cap B(0,R))$ between the corresponding dyadic counts. It follows
that the last limsup is exactly $\UMdim E$, completing the proof.
\end{proof}

\begin{remark}\label{rem:mass-cover-comparison}
The covering argument in the preceding proof applies to arbitrary
$E\subset\R^d$. For a locally finite set it yields
\begin{equation}
 \Dt^+_1(E)=\limsup_{R\to\infty}
 \frac{\log\max\{1,\mathcal N_1(E\cap B(0,R))\}}{\log R}
 \le\UMdim E.
 \label{eq:mass-cover-comparison}
\end{equation}
Similarly,
\[
 \liminf_{R\to\infty}
 \frac{\log\max\{1,\mathcal N_1(E\cap B(0,R))\}}{\log R}
 \le\LMdim E,
\]
with equality when $E$ is uniformly discrete. Both inequalities follow
from $\mathcal N_1(A)\le\#A$ and can be strict without uniform
discreteness. For example, let
\[
 F=\bigcup_{n\ge1}
 \{3\cdot2^{n-2}+j2^{-3n}:0\le j<2^n\}\subset\R.
\]
Each shell contains $2^n$ points in one unit ball, so
$\UMdim F=\LMdim F=1$, whereas both unit-cover exponents and
$\Dt^-_1(F),\Dt^+_1(F)$ are zero. We remark that even for uniformly discrete sets,
$\Dt^-_1(E)=\LMdim E$ can fail, see Example \ref{ex:separating-lower}.
\end{remark}

\subsection{Monotonicity and continuity}
Analogously to the discrete intermediate dimensions introduced in
\cite{LXZ}, the macroscopic intermediate dimensions, as functions of
$\theta$, are nondecreasing on $[0,1]$ and continuous on $(0,1]$.

\begin{proposition}\label{prop:dimension-continuity}
Both spectra are nondecreasing on $[0,1]$. For either sign and
$0<\theta<\varphi\le1$,
\begin{equation}
 0\le\Dt^\pm_\varphi(E)-\Dt^\pm_\theta(E)
 \le\left(1-\frac{\theta}{\varphi}\right)
 \bigl(d-\Dt^\pm_\theta(E)\bigr).
 \label{eq:dimension-continuity}
\end{equation}
In particular, both spectra are continuous on $(0,1]$.
\end{proposition}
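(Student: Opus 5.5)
Monotonicity comes first. For $\theta<\varphi$ and $n\ge N$ we have $2^{n-N\varphi}\le2^{n-N\theta}$. Every cover admissible for $\mathcal H_{n,N}^{s,\varphi}(E)$ is therefore admissible for $\mathcal H_{n,N}^{s,\theta}(E)$, so $\mathcal H_N^{s,\theta}(E)\le\mathcal H_N^{s,\varphi}(E)$ for every $N$. Any $s$ admissible in \eqref{eq:def-minus} or \eqref{eq:def-plus} at parameter $\varphi$ is then admissible at $\theta$, which gives $\Dt^\pm_\theta(E)\le\Dt^\pm_\varphi(E)$ on all of $[0,1]$. Continuity on $(0,1]$ follows from \eqref{eq:dimension-continuity}: the right-hand side tends to $0$ as $\varphi\downarrow\theta$ or $\theta\uparrow\varphi$ with $\theta>0$, and the dimensions are bounded by $d$ (Proposition~\ref{prop:basic-endpoints}).

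The main step is the upper bound in \eqref{eq:dimension-continuity}. I will convert a $\theta$-cover into a $\varphi$-cover by splitting large balls. Fix $s>\Dt^\pm_\theta(E)$ with $s<d$; the case $\Dt^\pm_\theta(E)=d$ is trivial. Set
\[
t=s+\Bigl(1-\frac{\theta}{\varphi}\Bigr)(d-s),\qquad\text{equivalently}\qquad d-t=\frac{\theta}{\varphi}(d-s).
\]
Fix $n\ge N$ and a cover $\{B(x_i,r_i)\}$ of $E\cap S_n$ admissible for $\mathcal H_{n,N}^{s,\theta}(E)$, so $1\le r_i\le2^{n-N\theta}$. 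Balls with $r_i\le2^{n-N\varphi}$ are kept. Each ball with $2^{n-N\varphi}<r_i$ is replaced by at most $C_d(r_i/\rho)^d$ balls of radius $\rho=2^{n-N\varphi}$ covering $B(x_i,r_i)\cap S_n$. To keep the centres in $S_n$, I re-centre each small ball at a point of its intersection with $E\cap S_n$ and double the radius. This costs only a constant, and the radius bound can be restored by working with $N+1$, or by absorbing a factor $2$ since $2^{n-N\varphi}\ge1$.

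Next I compare costs. A kept ball satisfies $r_i/2^n\le2^{-N\varphi}$ and $r_i/2^n\ge 2^{-n}$, so $(r_i/2^n)^t\le(r_i/2^n)^s$ because $t\ge s$. For a split ball, write $u=r_i/2^n\in(2^{-N\varphi},2^{-N\theta}]$. The new cost is
\[
\lesssim (u/2^{-N\varphi})^d\,2^{-N\varphi t}=u^d\,2^{N\varphi(d-t)}=u^d\,2^{N\theta(d-s)}.
\]
Since $u\le2^{-N\theta}$, we have $u^{d-s}\le2^{-N\theta(d-s)}$, and hence the new cost is at most $u^s$. This gives $\mathcal H_{n,N}^{t,\varphi}(E)\lesssim\mathcal H_{n,N}^{s,\theta}(E)$ uniformly in $n\ge N$.

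Summing over $n\ge N$ yields $\mathcal H_N^{t,\varphi}(E)\le C_d\,\mathcal H_N^{s,\theta}(E)$. Consequently the liminf (respectively limsup) condition at $s,\theta$ implies the same condition at $t,\varphi$, and so $\Dt^\pm_\varphi(E)\le t$. Letting $s\downarrow\Dt^\pm_\theta(E)$ gives \eqref{eq:dimension-continuity}. The delicate points are bookkeeping ones: the centres must lie in $S_n$ (handled by the re-centring step), and the constant $C_d$ must be harmless. It is, because the criterion involves $\log(\cdot)/N$. The exponent identity $\varphi(d-t)=\theta(d-s)$ is exactly what makes the split cost estimate close.
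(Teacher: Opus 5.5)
Your proof is correct and is essentially the paper's argument: the same exponent $t=d-(\theta/\varphi)(d-s)$, the same splitting of each ball of radius $r>2^{n-N\varphi}$ into $O((r/2^{n-N\varphi})^d)$ balls of radius $\rho=2^{n-N\varphi}$, and the same cost identity $\varphi(d-t)=\theta(d-s)$. There are two small bookkeeping points. First, passing to $N+1$ shrinks the radius bound rather than restoring it, so split at radius $\rho/2$ before doubling; alternatively, as the paper does, take a maximal $\rho/2$-separated subset of $B(x_i,r_i)\cap E\cap S_n$, whose $\rho$-balls are already centred in $S_n$. Second, the admissibility of an arbitrary $s>\Dt^\pm_\theta(E)$ should be justified by the upward closure coming from \eqref{eq:strict-cover}, or you can simply let $s$ decrease to the infimum through admissible exponents.
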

\begin{proof}
If $0\le\theta\le\varphi\le1$, every $(N,\varphi)$-admissible cover is
also $(N,\theta)$-admissible. Hence
$\mathcal H_N^{s,\theta}(E)\le\mathcal H_N^{s,\varphi}(E)$, and both
critical exponents are nondecreasing.

Fix $0<\theta<\varphi\le1$, $0\le s<d$, and put
$t=d-(\theta/\varphi)(d-s)$. We prove the content comparison
\begin{equation}
 \mathcal H_N^{t,\varphi}(E)\le C_d\mathcal H_N^{s,\theta}(E).
 \label{eq:continuity-content}
\end{equation}
For $n\ge N$, take a cover admissible for
$\mathcal H_{n,N}^{s,\theta}(E)$ and set $u=2^{n-N\varphi}$. Retain each ball of radius $r\le u$. Since $t\ge s$ and $r/2^n\le1$,
$
\left(\frac r{2^n}\right)^t
\le
\left(\frac r{2^n}\right)^s.
$ If $r>u$, choose a maximal
$u/2$-separated subset of $B(x,r)\cap E\cap S_n$. Its $u$-balls cover
that set, their centres belong to $S_n$, and volume comparison shows
that their number is at most $C_d(r/u)^d$. The corresponding contribution to the \(t\)-covering sum is at most
$
 C_d(r/2^n)^s(r/2^n)^{d-s}(u/2^n)^{t-d}
 \le C_d(r/2^n)^s,
$
because $r/2^n\le2^{-N\theta}$,
$u/2^n=2^{-N\varphi}$, and
$\theta(d-s)+\varphi(t-d)=0$. Taking infima, summing over $n\ge N$,
and then approximating the infima if necessary proves
\eqref{eq:continuity-content}.

Let $D=\Dt^\pm_\theta(E)<d$. By \eqref{eq:strict-cover},
if an exponent satisfies the corresponding defining condition,
then every larger exponent also satisfies it.
Since $D$ is the infimum of the exponents satisfying this condition,
every $s$ with $D<s<d$ satisfies it. After taking the relevant lower or upper limit
in \eqref{eq:continuity-content}, $t$ is admissible at $\varphi$.
Consequently
$
 \Dt^\pm_\varphi(E)
 \le d-\frac{\theta}{\varphi}(d-s).
$
Letting $s\downarrow D$ proves \eqref{eq:dimension-continuity}. If
$D=d$, that estimate follows from Proposition~\ref{prop:basic-endpoints}.
Right continuity at every $\theta>0$ follows by letting
$\varphi\downarrow\theta$ in the estimate, and left continuity follows
by letting $\theta\uparrow\varphi$. This includes left continuity at
$1$.
\end{proof}

The following example shows that there may be a jump at the macroscopic endpoint.
\begin{example}\label{ex:zero-jump}
Consider the subset of a line in $\R^2$
\[
 F=\bigcup_{n\ge n_0}
 \{(3\cdot2^{n-2}+j,0): |j|\le L_n,\ j\in\Z\},
 \qquad L_n=\lfloor2^{n-\sqrt n}\rfloor,
\]
where $n_0$ is large enough that every displayed block is contained in
$S_n$.  Indeed, its centre is $3\cdot2^{n-2}$ and
$L_n=o(2^n)$.

At $\theta=0$, the $n$-th block is covered by one ball of radius
$L_n+1$. Hence, for every $s>0$,
\[
 \mathcal H_n^s(F)\le
 C_s\left(\frac{L_n+1}{2^n}\right)^s
 \le C_s2^{-s\sqrt n}.
\]
Since $\sum_n2^{-s\sqrt n}<\infty$, Proposition~\ref{prop:basic-endpoints}
gives
\[
 \Dt^-_0(F)=\Dt^+_0(F)=0.
\]

Now fix $\theta>0$ and $0<s<1$.  At the outer scale $N$, the admissible
radii in the shell $S_N$ satisfy
$r_i\le R_N:=2^{(1-\theta)N}$.  The $N$-th block contains
$2L_N+1\asymp2^{N-\sqrt N}$ integer-spaced points.  A ball of radius
$r_i\ge1$ contains at most $2r_i+1\le3r_i$ of these points, so every
admissible cover satisfies
$
 \sum_i r_i\ge cL_N.
$
Because $s<1$ and $r_i\le R_N$, we also have
$r_i^s\ge r_iR_N^{s-1}$.  Therefore
\[
 \mathcal H_N^{s,\theta}(F)
 \ge \mathcal H_{N,N}^{s,\theta}(F)\ge c_s2^{-Ns}R_N^{s-1}L_N
 \ge c_s2^{\theta(1-s)N-\sqrt N}.
\]
The normalized lower logarithmic rate is thus positive, and
$\Dt^-_\theta(F)\ge1$.  On the other hand, every shell of the line can
be covered by $O(2^n)$ unit balls, so Proposition~\ref{prop:single}
(or directly the geometric series estimate) gives
$\Dt^+_\theta(F)\le1$. Consequently
\[
 \Dt^-_\theta(F)=\Dt^+_\theta(F)=1\qquad(\theta>0).
\]
\end{example}

The preceding example shows that continuity at $\theta=0$ may fail in general. The following proposition gives a simple quantitative condition ensuring continuity at \(\theta=0\).
\begin{proposition}\label{prop:zero-cont}
Let $h=\hDim E<d$. Suppose that for every $q>h$ there are
$\varepsilon_q>0$ and $C_q<\infty$ such that
\begin{equation}
 A_N^q(E):=\sum_{n\ge N}\mathcal H_n^q(E)\le C_q2^{-\varepsilon_q N}
 \quad\text{for all large }N.
 \label{eq:macro-decay}
\end{equation}
Then $
\lim_{\theta\downarrow0}\Dt_\theta^\pm(E)
=\operatorname{Dim}_{\mathrm H}E.
$ It suffices to check
\eqref{eq:macro-decay} for exponents decreasing to $h$.
\end{proposition}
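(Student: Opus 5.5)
The lower bound is already available, so the plan is to prove a matching upper bound by a covering comparison. Proposition~\ref{prop:basic-endpoints} gives $h=\hDim E\le\Dt^-_\theta(E)\le\Dt^+_\theta(E)$ for every $\theta$. By Proposition~\ref{prop:dimension-continuity} both spectra are nondecreasing in $\theta$, so the limits as $\theta\downarrow0$ exist and are at least $h$. It therefore suffices to show: for every $q$ in a sequence decreasing to $h$ with $h<q<d$, there is $\theta_q>0$ with $\Dt^+_\theta(E)\le q$ for all $0<\theta<\theta_q$. This also accounts for the final sentence of the statement, since only such a sequence of exponents is used.

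\textbf{Key step: a content comparison between $\mathcal H_n^q$ and $\mathcal H_{n,N}^{q,\theta}$.} This is modelled on the proof of \eqref{eq:continuity-content}. Fix $q\in(h,d)$, $0<\theta\le1$, $n\ge N$, and put $u=2^{n-N\theta}\ge1$. Take a cover of $E\cap S_n$ admissible for $\mathcal H_n^q(E)$: centres in $S_n$ and radii $1\le r\le2^n$.
\begin{itemize}
\item Keep each ball with $r\le u$. It is admissible at level $(N,\theta)$ and costs $(r/2^n)^q$.
\item Replace each ball $B(x,r)$ with $r>u$ by the $u$-balls about a maximal $u/2$-separated subset of $B(x,r)\cap E\cap S_n$. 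Their centres lie in $S_n$, they cover that piece, and volume comparison bounds their number by $C_d(r/u)^d$.
\end{itemize}
The cost of the replacement balls is at most
\[
C_d\Bigl(\frac r{2^n}\Bigr)^d\Bigl(\frac u{2^n}\Bigr)^{q-d}
= C_d\Bigl(\frac r{2^n}\Bigr)^d2^{N\theta(d-q)}
\le C_d2^{N\theta(d-q)}\Bigl(\frac r{2^n}\Bigr)^q,
\]
using $r\le2^n$ and $q\le d$. Taking infima, with an $\epsilon$-approximation if needed, and summing over $n\ge N$ gives
\[
\mathcal H_N^{q,\theta}(E)\le C_d\,2^{N\theta(d-q)}A_N^q(E)\le C_dC_q\,2^{-N(\varepsilon_q-\theta(d-q))}
\]
for large $N$, by \eqref{eq:macro-decay}.

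\textbf{Conclusion.} Set $\theta_q=\varepsilon_q/(d-q)$. For $0<\theta<\theta_q$ the right-hand side tends to $0$, so the normalized $\limsup$ in \eqref{eq:def-plus} is negative at exponent $q$. Hence $\Dt^+_\theta(E)\le q$, and therefore $h\le\Dt^-_\theta(E)\le\Dt^+_\theta(E)\le q$. Letting $\theta\downarrow0$ and then $q\downarrow h$ along the given sequence proves the claim for both signs.

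The only delicate point is that the replacement balls must satisfy the constraints in the definition of $\mathcal H_{n,N}^{q,\theta}$: centres in $S_n$ and radii between $1$ and $2^{n-N\theta}$. Taking the separated set inside $E\cap S_n$, and noting that $u\ge1$ because $n\ge N$, handles both. I do not expect any other obstacle, since the exponent bookkeeping only needs $q<d$, which holds because $h<d$.
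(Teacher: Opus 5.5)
Your proposal is correct and follows the paper's proof: a macroscopic $q$-cover is converted into an $(N,\theta)$-admissible cover by keeping the small balls and subdividing the large ones, at the cost of a factor $C_d2^{N\theta(d-q)}$, and then $\theta<\varepsilon_q/(d-q)$ is chosen. The only difference is that you keep the exponent $q$ on both sides, whereas the paper passes to an auxiliary exponent $s\in(q,d)$ and proves $\mathcal H_N^{s,\theta}(E)\lesssim 2^{N\theta(d-s)}A_N^q(E)$; your version is slightly leaner and handles the sequence $q_j\downarrow h$ directly, without choosing an intermediate exponent.
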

\begin{proof}
Fix $h<q<s<d$, $n\ge N$, and put $R=2^{n-N\theta}$. Start with a
cover admissible for $\mathcal H_n^q(E)$. A ball of radius $r\le R$ is retained. Since $s>q $ and $r/2^n\le1$,
$
\left(\frac r{2^n}\right)^s
\le
\left(\frac r{2^n}\right)^q.$ If $r>R$, a maximal $R/2$-separated subset of
$B(x,r)\cap E\cap S_n$ supplies at most $C_d(r/R)^d$ radius-$R$ balls
with centres in $S_n$. The corresponding contribution to the \(s\)-covering sum is at most
$
 C_d(r/R)^d(R/2^n)^s
 =C_d(r/2^n)^d2^{N\theta(d-s)}
 \le C_d(r/2^n)^q2^{N\theta(d-s)}.
$
Taking infima over the original covers and summing over $n\ge N$ gives
\begin{equation}
 \mathcal H_N^{s,\theta}(E)
 \le C_{d,q,s}2^{N\theta(d-s)}A_N^q(E).
 \label{eq:zero-cont-content}
\end{equation}
By \eqref{eq:macro-decay}, if
$0\le\theta<\varepsilon_q/(d-s)$, then
$
 \mathcal H_N^{s,\theta}(E)
 \le C2^{-N(\varepsilon_q-\theta(d-s))}.
$
Thus $\Dt^+_\theta(E)\le s$. Proposition~\ref{prop:basic-endpoints}
now gives
$
 h\le\Dt^-_\theta(E)\le\Dt^+_\theta(E)\le s
$
for all sufficiently small $\theta$. Given $\eta>0$, choose
$h<q<s<\min\{h+\eta,d\}$; the preceding inequalities prove that both
dimensions tend to $h$ as $\theta\downarrow0$. Since their values at
zero equal $h$, both spectra are continuous there. The same argument
only requires a sequence $q_j\downarrow h$ for which
\eqref{eq:macro-decay} holds: given $\eta$, choose $q_j<h+\eta$ and
then choose $s$ between $q_j$ and $\min\{h+\eta,d\}$.
\end{proof}

\subsection{Quasi-isometries}\label{sec:qi}
A map $f:E\to F$ is a quasi-isometry if, for some $L\ge1$ and $c\ge0$,
\[
 L^{-1}|x-x'|-c\le|f(x)-f(x')|\le L|x-x'|+c
\]
for all $x,x'\in E$, and every point of $F$ is within distance $c$
of $f(E)$. The ambient dimensions of $E$ and $F$ may differ.  Translations, nonzero dilations, and bi-Lipschitz maps are special cases. 

The following proposition, whose idea of proof comes from \cite{LXZ}, shows that the lower and upper macroscopic intermediate dimensions are invariant under quasi-isometries. 

\begin{proposition}\label{prop:qi}
If $E\subset\R^d$ and $F\subset\R^k$ are quasi-isometric, then
$\Dt^\pm_\theta(E)=\Dt^\pm_\theta(F)$ for all $\theta\in[0,1]$.
In particular, both spectra are independent of the origin and of the
fixed unit scale used consistently in the definition.
\end{proposition}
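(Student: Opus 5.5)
By symmetry it suffices to prove $\Dt^\pm_\theta(F)\le\Dt^\pm_\theta(E)$. A quasi-isometry $f:E\to F$ has a quasi-inverse $g:F\to E$: choose $g(y)\in E$ with $|f(g(y))-y|\le c$. This $g$ is again a quasi-isometry with constants depending only on $L,c$, so applying the one-sided inequality to $g$ gives the reverse. The target is the content comparison
\[
 \mathcal H_N^{s,\theta}(F)\le C\,\mathcal H_{N-a}^{s,\theta}(E)\qquad(N\text{ large}),
\]
with $C,a$ depending only on $L,c,d,k,s$ (in particular not on $N$). A fixed shift $N\mapsto N-a$ and a fixed constant $C$ do not change either normalized logarithmic limit in \eqref{eq:def-minus} or \eqref{eq:def-plus}. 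This gives the inequality for every $\theta\in[0,1]$ and both signs at once.

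\emph{Step 1: shell displacement.} Fix $x_0\in E$ (assume $E$ unbounded; otherwise $F$ is bounded too and both dimensions vanish by Proposition~\ref{prop:basic-endpoints}). From the quasi-isometry inequalities and $|x-x_0|\asymp2^n$ for $x\in S_n$, $n$ large, we get $|f(x)|\asymp 2^n$ up to factors $L^{\pm1}$ and additive constants depending on $|x_0|,|f(x_0)|,c$. Every point of $F$ lies within $c$ of $f(E)$. Hence there is an integer $a$ such that, for $m$ larger than some $m_0$, the set $F\cap S_m$ lies in the $c$-neighbourhood of $f\bigl(E\cap\bigcup_{|n-m|\le a}S_n\bigr)$. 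Also, each source shell $n$ feeds at most $2a+1$ target shells. The finitely many shells below $m_0$, and the bounded region near the base points, are irrelevant once $N>m_0+a$.

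\emph{Step 2: transporting covers.} Take near-optimal admissible covers $\{B(x_i,r_i)\}$ of $E\cap S_n$ for $\mathcal H_{n,N-a}^{s,\theta}(E)$, $n\ge N-a$. The image of $B(x_i,r_i)\cap E$ has diameter at most $2Lr_i+c$. After the $c$-enlargement, its contribution to $F\cap S_m$ lies in one ball of radius $\rho_i\le C_0r_i$ (using $r_i\ge1$). If that ball meets $F\cap S_m$, recentre it at a point of $F\cap S_m$ and double the radius, so the centre constraint holds.

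The radius constraint for $F$ at outer scale $N$ is $\rho\le2^{m-N\theta}$. We have $2^{n-(N-a)\theta}\le 2^{2a}\,2^{m-N\theta}$, so $\rho_i$ may exceed the bound by a fixed factor $2^{2a}C_0$. In that case replace the ball by $O_{k}\bigl((2^{2a}C_0)^k\bigr)$ balls of the allowed radius, centred at points of $F\cap S_m$ via a maximal separated subset, as in the proof of Proposition~\ref{prop:dimension-continuity}. All these radii remain $\ge1$. The normalizations $2^{-n}$ and $2^{-m}$ differ by at most $2^{a}$.

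\emph{Step 3: summing.} Summing over $m\ge N$, and using that each source shell is charged at most $2a+1$ times, yields the displayed comparison.

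Invariance under change of origin and of unit scale follows because translations and dilations are quasi-isometries. The only delicate point is the bookkeeping in Steps 1--2. One has to check that the shell mismatch $a$, the additive constant $c$ (harmless only because $r_i\ge1$), and the recentring into the target shell all cost constants independent of $N$ and $n$. Only then does the comparison survive the normalization by $N\log2$.
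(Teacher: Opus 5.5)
Your proposal is correct and follows essentially the same route as the paper: shell localization with a fixed index shift of the outer scale, transport of near-optimal source covers with bounded recentring and subdivision, multiplicity at most $2a+1$, and a coarse inverse for the reverse inequality. The only small slip is that the coarse inverse should be chosen with $|f(g(y))-y|\le c+1$ rather than $\le c$, because the distance from $y$ to $f(E)$ need not be attained; the paper makes exactly this choice.
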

\begin{proof}
The empty-set case is immediate. Quasi-isometric spaces are bounded simultaneously, so the bounded case follows from Proposition~\ref{prop:basic-endpoints}. We may therefore suppose that $E$ and $F$ are nonempty and unbounded.
Choose for each $y\in F$ a point $p(y)\in E$ with
$|y-f(p(y))|\le c+1$. Fix a point of $E$.
The distortion inequalities, applied relative to this point, imply
that for all large shells
\[
 y\in S_j\ \Longrightarrow\ p(y)\in\bigcup_{|n-j|\le K}S_n
\]
for a fixed integer $K$. If several preimages belong to one source
ball of radius $r\ge1$, their target points have diameter at most
$Ar$, where $A$ is independent of the shell.

Use the source outer scale $N-K$ for the target scale $N$.
For $|n-j|\le K$, the ratio between the maximal admissible radii at the source and target scales is
$
 \frac{2^{n-(N-K)\theta}}{2^{j-N\theta}}
 =2^{n-j+K\theta}\le2^{2K}.
$
Thus a target piece of diameter at most $Ar$ can be covered by a
bounded number of legal target balls, with centres in $S_j$.
If its diameter is below one, a unit ball is sufficient. The normalized
cost is at most $C_s(r/2^n)^s$. A source shell contributes to at most
$2K+1$ target shells. Taking infima and summing gives
\begin{equation}
 \mathcal H_N^{s,\theta}(F)\le C_s\mathcal H_{N-K}^{s,\theta}(E)
 \quad\text{for all large }N.
 \label{eq:qi-content}
\end{equation}
The chosen map $p$ is a coarse inverse: the lower distortion inequality
bounds $|p(f(x))-x|$ uniformly. It satisfies analogous distortion bounds
and gives the reverse content inequality. Fixed index shifts and fixed
multiplicative constants do not change either critical exponent, proving the result. Translating the origin changes only the shell-localization constant. For a fixed unit scale $a>0$, use shells with radii $a2^n$, allowed radii $a\le r_i\le a2^{n-N\theta}$, and normalization $a2^n$. Dilation by $a^{-1}$ reduces this convention to the one used above. This proves the final assertion.
\end{proof}

\subsection{Comparison with discrete intermediate dimensions}\label{sec:old}
Li, Xi and Zhang \cite{LXZ} introduced lower and upper discrete
intermediate dimensions for unbounded sets in $\R^d$. Their definitions
have the following equivalent dyadic form. For $\eta>0$ and $0\le \theta\le 1$, put
\[
 J_{N,\eta}=\{N,\ldots,\lceil N/\eta\rceil\},
 \qquad J_{N,0}=\{n:n\ge N\},
\]
and
\[
 \Phi_{2^{-N}}^{s,\theta}(F)=\sup_{0\le\eta\le\theta}
 \sum_{n\in J_{N,\eta}}\mathcal H_{n,N}^{s,\eta}(F).
\]
Then
\[
\begin{aligned}
 \lDim_\theta F
 &=\inf\left\{s\ge0:\liminf_{N\to\infty}
 \frac{\log\Phi_{2^{-N}}^{s,\theta}(F)}{N\log2}\le0\right\},\\
 \uDim_\theta F
 &=\inf\left\{s\ge0:\limsup_{N\to\infty}
 \frac{\log\Phi_{2^{-N}}^{s,\theta}(F)}{N\log2}\le0\right\}.
\end{aligned}
\]
To compare with the definitions for all $\delta\downarrow0$, let
$2^{-N}\le\delta<2^{-N+1}$. The ranges of dyadic annuli involved in the definition remain the same, while the corresponding maximal admissible radii satisfy $2^{n-N\eta}\le2^n\delta^\eta\le2\,2^{n-N\eta}$.
Subdivision and recentering by a bounded number of balls therefore give
$
 \Phi_\delta^{s,\theta}(F)\le\Phi_{2^{-N}}^{s,\theta}(F)
 \le C_{d,s}\Phi_\delta^{s,\theta}(F).
$
Since $-\log\delta=N\log2+O(1)$, both critical exponents agree.
Thus these are the existing discrete intermediate dimensions, with no
additional dimension notation. When indexing by the dyadic level, we
also write $\Phi_N^{s,\theta}(F)=\Phi_{2^{-N}}^{s,\theta}(F)$.

\begin{proposition}\label{prop:old-comparison}
For every $F\subset\R^d$ and $0\le\theta\le1$,
\begin{equation}
 \lDim_\theta F\le\Dt^-_\theta(F)
 \le\Dt^+_\theta(F)=\uDim_\theta F.
 \label{eq:old-comparison}
\end{equation}
\end{proposition}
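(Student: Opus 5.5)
The plan is to compare the two functionals $\Phi_N^{s,\theta}(F)$ and $\mathcal H_N^{s,\theta}(F)$ directly. One pointwise inequality gives both lower bounds $\lDim_\theta F\le\Dt^-_\theta(F)$ and $\uDim_\theta F\le\Dt^+_\theta(F)$. For the reverse inequality in the upper case I use the single-shell characterization of Proposition~\ref{prop:single}. The middle inequality $\Dt^-_\theta(F)\le\Dt^+_\theta(F)$ holds simply because a $\liminf$ is at most the corresponding $\limsup$.

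\emph{Step 1: $\Phi_N^{s,\theta}(F)\le\mathcal H_N^{s,\theta}(F)$.} Fix $0\le\eta\le\theta$ and $n\ge N$. Then $2^{n-N\eta}\ge2^{n-N\theta}$, so every cover admissible for $\mathcal H_{n,N}^{s,\theta}(F)$ is also admissible for $\mathcal H_{n,N}^{s,\eta}(F)$. Hence
\[
\mathcal H_{n,N}^{s,\eta}(F)\le\mathcal H_{n,N}^{s,\theta}(F).
\]
Since $J_{N,\eta}\subset\{n:n\ge N\}$ and all terms are nonnegative, summing over $n\in J_{N,\eta}$ and taking the supremum over $\eta$ gives the claim. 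Taking normalized logarithms, every exponent $s$ admissible in \eqref{eq:def-minus} (respectively \eqref{eq:def-plus}) is admissible for $\lDim_\theta$ (respectively $\uDim_\theta$). This proves $\lDim_\theta F\le\Dt^-_\theta(F)$ and $\uDim_\theta F\le\Dt^+_\theta(F)$.

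\emph{Step 2: $\Dt^+_\theta(F)\le\uDim_\theta F$.} When $\theta=0$ the supremum is over the single value $\eta=0$, and $J_{N,0}=\{n\ge N\}$, so $\Phi_N^{s,0}(F)=\mathcal H_N^{s,0}(F)$ and the two dimensions coincide outright.

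For $\theta>0$, take $\eta=\theta$ and the index $n=N\in J_{N,\theta}$. This gives
\[
\mathcal H_{N,N}^{s,\theta}(F)\le\Phi_N^{s,\theta}(F).
\]
Hence, for every $s>\uDim_\theta F$ admissible for $\uDim_\theta$,
\[
\limsup_{N\to\infty}\frac{\log\mathcal H_{N,N}^{s,\theta}(F)}{N\log2}\le0.
\]
By the characterization \eqref{eq:tddy} of Proposition~\ref{prop:single}, valid for $\theta>0$, this gives $\Dt^+_\theta(F)\le s$. Letting $s\downarrow\uDim_\theta F$ yields the reverse inequality.

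Step 2 is where an obstacle would normally sit. The tail over all $n\ge N$ in $\mathcal H_N^{s,\theta}$ is not controlled by the finite range $J_{N,\eta}$, so a direct comparison fails. That difficulty has already been absorbed by the exponent-gap argument behind Proposition~\ref{prop:single}, which reduces everything to the diagonal term $\mathcal H_{N,N}^{s,\theta}$. What remains is to check that the diagonal term is indeed among those in $\Phi_N^{s,\theta}$, which holds since $N\in J_{N,\theta}$.

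The lower inequality $\lDim_\theta F\le\Dt^-_\theta(F)$ is expected to be strict in general, as Example~\ref{ex:separating-lower} will illustrate. So no reverse argument is attempted there.
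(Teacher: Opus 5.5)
Your proposal is correct and follows the paper's proof essentially step by step. The sandwich $\mathcal H_{N,N}^{s,\theta}(F)\le\Phi_N^{s,\theta}(F)\le\mathcal H_N^{s,\theta}(F)$ gives both comparisons, the case $\theta=0$ reduces to the identity $\Phi_N^{s,0}(F)=\mathcal H_N^{s,0}(F)$, and for $\theta>0$ the diagonal bound combined with \eqref{eq:tddy} gives $\Dt^+_\theta(F)\le\uDim_\theta F$; your phrase ``admissible for $\uDim_\theta$'' is harmless either way, since any admissible $s$ yields $\Dt^+_\theta(F)\le s$ and one then takes the infimum.
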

\begin{proof}
For $0\le\eta\le\theta$, the upper radius bound at $\eta$ is no
smaller than the one at $\theta$, and $J_{N,\eta}\subset\{n:n\ge N\}$.
Moreover, $N\in J_{N,\theta}$. It follows directly from the definitions
that
\[
 \mathcal H_{N,N}^{s,\theta}(F)
 \le\Phi_{2^{-N}}^{s,\theta}(F)\le\mathcal H_N^{s,\theta}(F).
\]
Indeed, the first inequality uses the term $n=N$ with $\eta=\theta$;
for the second, for each fixed $\eta\le\theta$,
\[
 \sum_{n\in J_{N,\eta}}\mathcal H_{n,N}^{s,\eta}(F)
 \le\sum_{n\in J_{N,\eta}}\mathcal H_{n,N}^{s,\theta}(F)
 \le\mathcal H_N^{s,\theta}(F),
\]
and one then takes the supremum over $\eta$. Passing to the lower and
upper normalized logarithmic limits gives
$\lDim_\theta F\le\Dt^-_\theta(F)$ and
$\uDim_\theta F\le\Dt^+_\theta(F)$.

Assume $\theta>0$. If $t>\uDim_\theta F$, exponent monotonicity makes
$t$ admissible in the definition of $\uDim_\theta F$. The first
inequality above then yields
\[
 \limsup_{N\to\infty}
 \frac{\log\mathcal H_{N,N}^{t,\theta}(F)}{N\log2}\le0.
\]
The second characterization in Proposition~\ref{prop:single} implies
$\Dt^+_\theta(F)\le t$. Letting $t\downarrow\uDim_\theta F$ proves the
reverse upper inequality. If $\theta=0$, then $J_{N,0}=\{n:n\ge N\}$
and the only auxiliary parameter is $\eta=0$, so
$\Phi_{2^{-N}}^{s,0}(F)=\mathcal H_N^{s,0}(F)$; equality is immediate\BlueChange{.}
\end{proof}

The folllwoing example shows that  the new lower spectrum need not agree with either lower mass dimension or the  lower discrete intermediate dimension.
\begin{example}\label{ex:separating-lower}
Put $q=5/4$, $n_k=\lfloor q^k\rfloor$, and, after discarding finitely many initial levels, let
\[
 A=\bigcup_{k\ge k_0}
 \{(3\cdot2^{n_k-2}+j)\ee: j\in\Z,\ |j|\le L_k\},
 \quad L_k=\lfloor2^{3n_k/4}\rfloor.
\]
Each block lies in $S_{n_k}$. At parameter one,
\begin{equation}
 \LMdim A=\frac35,
 \qquad \lDim_1 A=\frac{39}{64},
 \qquad \Dt^-_1(A)=\frac{11}{16},
 \qquad \Dt^+_1(A)=\frac34.
 \label{eq:separating-values}
\end{equation}
\end{example}
\begin{proof}
	Write $\delta=1/4$ and
	$
	A_k=\{(3\cdot 2^{n_k-2}+j)e_1:j\in\mathbb Z,\ |j|\le L_k\}.
	$
	After discarding finitely many initial terms, $(n_k)$ is strictly
	increasing and
$
	L_k\asymp 2^{(1-\delta)n_k}=2^{3n_k/4}=o(2^{n_k}).
$
	Hence
	\[
	2^{n_k-1}
	<
	3\cdot2^{n_k-2}-L_k
	<
	3\cdot2^{n_k-2}+L_k
	<
	2^{n_k},
	\]
	so $A_k\subset S_{n_k}$, and all other sufficiently large annuli are
	empty. Moreover,
$
	\#A_k=2L_k+1\asymp 2^{(1-\delta)n_k}.
$
	
	For $N\ge1$, put
$
	a_N=\min\{n_k:n_k\ge N\}.
$
	Fix $0<s<1$, let $n=n_k\ge N$, and let $0\le\eta\le1$. Set
$
	R=2^{n-N\eta}.
$
	If $\{B(x_i,r_i)\}_i$ is an admissible cover of $A_k$, then each ball
	contains at most $2r_i+1\le3r_i$ points. Thus
$
	\sum_i r_i\gtrsim 2^{(1-\delta)n}.
$
	Since $r_i\le R$ and $s<1$,
$
	r_i^s\ge r_iR^{s-1},
$
	and therefore
	\[
	\mathcal H^{s,\eta}_{n,N}(A)
	\gtrsim
	2^{-ns}R^{s-1}2^{(1-\delta)n}
	=
	2^{-\delta n+(1-s)N\eta}.
	\]
	
Conversely, if $L_k+1\le R$, one admissible ball covers $A_k$ and gives
	a contribution $O(2^{-s\delta n})$. If $L_k+1>R$, subdividing the block
	at scale $R$, with centres chosen in $A_k$, gives
	\[
	\mathcal H^{s,\eta}_{n,N}(A)
	\lesssim
	\frac{L_k}{R}\left(\frac{R}{2^n}\right)^s
	\lesssim
	2^{-\delta n+(1-s)N\eta}.
	\]
	Consequently,
	\begin{equation}
		c_s2^{-\delta n+(1-s)N\eta}
		\le
		\mathcal H^{s,\eta}_{n,N}(A)
		\le
		C_s\left(
		2^{-s\delta n}
		+
		2^{-\delta n+(1-s)N\eta}
		\right).
		\tag{2.16}
	\end{equation}
	
	We first compute the full-tail dimensions at $\theta=1$. Summing
	(2.16) over the occupied annuli and using the geometric growth of
	$n_k$ gives
	\[
	c_s2^{-\delta a_N+(1-s)N}
	\le
	\mathcal H^{s,1}_N(A)
	\le
	C_s\left(
	2^{-s\delta a_N}
	+
	2^{-\delta a_N+(1-s)N}
	\right).
	\tag{2.17}
	\]
	Furthermore,
	\[
	\liminf_{N\to\infty}\frac{a_N}{N}=1,
	\qquad
	\limsup_{N\to\infty}\frac{a_N}{N}=q.
	\tag{2.18}
	\]
	Indeed, the first value is attained along $N=n_k$, whereas for
	$N=n_k+1$ we have $a_N=n_{k+1}$ and $a_N/N\to q$. Since the first term
	on the right-hand side of (2.17) decays exponentially for every
	$s>0$, the critical behaviour is determined by
	$-\delta a_N+(1-s)N$. Hence
	\[
	D^-_1(A)=1-q\delta=\frac{11}{16},
	\qquad
	D^+_1(A)=1-\delta=\frac34.
	\tag{2.19}
	\]
		
For the  lower discrete intermediate dimension, choose
	$\eta=N/a_N$. Then $a_N\in J_{N,\eta}$, and (2.16) gives
$
	\Phi^{s,1}_N(A)
	\gtrsim
	2^{-\delta a_N+(1-s)N^2/a_N}.
$
	Conversely, if $n>N$ belongs to $J_{N,\eta}$, then
	$\eta\le N/(n-1)$. Put
	$
	\omega_N(N)=1 $ and 
	$\omega_N(n)=\min\left\{1,\frac{N}{n-1}\right\}$ for $n>N.$
	The function
$
	-\delta n+(1-s)N\omega_N(n)
$
	decreases by at least $\delta$ as $n$ increases. Hence the sum over
	the occupied annuli is controlled by its first term, and
	\[
	c_s2^{-\delta a_N+(1-s)N^2/a_N}
	\le
	\Phi^{s,1}_N(A)
	\le
	C_s\left(
	2^{-s\delta a_N}
	+
	2^{-\delta a_N+(1-s)N^2/a_N}
	\right).
	\tag{2.20}
	\]
	Writing $r_N=a_N/N$, the relevant normalized exponent is
$	-\delta r_N+\frac{1-s}{r_N}.
$
	This is decreasing in $r_N$, so its lower limit is obtained along
	$r_N\to q$. The critical equation
$
	-\delta q+\frac{1-s}{q}=0
$
	therefore gives
	\[
\lDim_1 A
	=
	1-\delta q^2
	=
	1-\frac14\left(\frac54\right)^2
	=
	\frac{39}{64}.
	\tag{2.21}
	\]
	
	Finally, since $n_k$ grows geometrically we have
$
	\#A_k\asymp 2^{(1-\delta)n_k}$ and 
$
	\sum_{j\le k}\#A_j\asymp 2^{(1-\delta)n_k}.
$
 Between the $k$-th and $(k+1)$-st
	blocks, the counting function is therefore comparable to
	$2^{(1-\delta)n_k}$, while the radius can be comparable to
	$2^{n_{k+1}}$. It follows that
	\[
	\dim_{\mathrm{LM}}A
	=
	(1-\delta)\lim_{k\to\infty}\frac{n_k}{n_{k+1}}
	=
	\frac{1-\delta}{q}
	=
	\frac35.
	\]

\end{proof}

\section{Capacities and macroscopic intermediate dimension profiles}\label{sec:capacity}

We establish the basic properties of the dimension profiles and
compare capacities with covering sums to prepare for the proof
of the projection theorem.
\subsection{Kernel comparisons}
For $R'\ge R\ge1$ and $0<u<v<m$, we have
\begin{equation}
 K_R^{v,m}(z)\le K_{R'}^{v,m}(z)\le\left(\frac{R'}{R}\right)^{m-v}K_R^{v,m}(z),
 \qquad K_R^{u,m}(z)\le R^{v-u}K_R^{v,m}(z).
 \label{eq:kernel-comparisons}
\end{equation}
In fact, write \(t=|z|\). If \(R'\geq R\), the definition gives

$$ \frac{K_{R'}^{v,m}(z)}{K_R^{v,m}(z)} = \begin{cases} 1,&t\leq R,\\ (t/R)^{m-v},&R<t\leq R',\\ (R'/R)^{m-v},&t>R'. \end{cases} $$
Since \(m-v>0\), this ratio lies between \(1\) and \((R'/R)^{m-v}\). Hence
$$ K_R^{v,m}(z)\leq K_{R'}^{v,m}(z) \leq (R'/R)^{m-v}K_R^{v,m}(z). $$
Likewise, for \(0<u<v<m\),
$$ \frac{K_R^{u,m}(z)}{K_R^{v,m}(z)} = \begin{cases} 1,&t\leq1,\\ t^{v-u},&1<t\leq R,\\ R^{v-u},&t>R. \end{cases} $$
Each expression is at most \(R^{v-u}\), proving
$ K_R^{u,m}(z)\leq R^{v-u}K_R^{v,m}(z). $
The second comparison in \eqref{eq:kernel-comparisons}  gives
\begin{equation}
 \Ct_N^{v,\theta,m}(E)
 \le 2^{-N\theta(v-u)}\Ct_N^{u,\theta,m}(E).
 \label{eq:strict-capacity}
\end{equation}

%This holds also for infinite sums. Thus the defining exponent sets for the profiles are upward closed. Below a profile the corresponding logarithmic growth rate is positive, possibly infinite; above it the rate is nonpositive. For $\theta>0$, a further increase of the exponent provides a strictly negative growth margin at the relevant scales. No continuity or zero of a growth-rate function is assumed.

We include the following approximation lemma because the capacities are defined using finitely supported probability measures, whereas the Radon measures arising at the macroscopic endpoint ($\theta=0$) need not be finitely supported.

\begin{lemma}\label{lem:finite-energy}
Let $A\subset\R^d$ be bounded, nonempty and Borel, $R\ge1$, and $0<s<m$.  If $\nu$ is a
finite Borel measure concentrated on $A$, then
\begin{equation}
 \iint K_R^{s,m}(x-y)\,d\nu(x)d\nu(y)
 \ge \frac{\nu(A)^2}{C_R^{s,m}(A)}.
 \label{eq:finite-energy-capacity}
\end{equation}
Moreover, $A\subset B$ implies
$C_R^{s,m}(A)\le C_R^{s,m}(B)$.
\end{lemma}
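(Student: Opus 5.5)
The monotonicity assertion is immediate, since $\Pf(A)\subset\Pf(B)$ when $A\subset B$. The infimum of the energy over the larger family is therefore no larger, and the capacity, being its reciprocal, is no smaller. The empty case is covered by the convention $C_R^{s,m}(\varnothing)=0$.

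For \eqref{eq:finite-energy-capacity}, we first normalize. If $\nu(A)=0$ there is nothing to prove, so we assume $\nu(A)>0$ and replace $\nu$ by the probability measure $\mu=\nu/\nu(A)$, concentrated on $A$. The claim then reads $I(\mu):=\iint K_R^{s,m}(x-y)\,d\mu\,d\mu\ge 1/C_R^{s,m}(A)=\inf_{\lambda\in\Pf(A)}I(\lambda)$. So it suffices to show that finitely supported probability measures on $A$ can approximate $I(\mu)$ from below or equal to it.

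The plan is a probabilistic (empirical measure) argument. We draw i.i.d.\ points $X_1,\dots,X_k$ with law $\mu$; they lie in $A$ almost surely because $\mu$ is concentrated on $A$. We form $\lambda_k=\frac1k\sum_{i}\delta_{X_i}\in\Pf(A)$ and compute
\[
 \mathbb E\, I(\lambda_k)=\frac{1}{k^2}\sum_{i\ne j}\mathbb E K(X_i-X_j)+\frac{1}{k^2}\sum_i K(0)=\frac{k-1}{k}I(\mu)+\frac{K(0)}{k},
\]
using $K(0)=1$. This is legitimate because the kernel is Borel and bounded by $1$, so all integrals are finite and Fubini applies. Some realization therefore satisfies $I(\lambda_k)\le \frac{k-1}{k}I(\mu)+\frac1k$, giving $\inf_{\Pf(A)}I\le I(\mu)+\frac1k(1-I(\mu))$. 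Letting $k\to\infty$ yields $\inf_{\Pf(A)}I\le I(\mu)$, and multiplying back by $\nu(A)^2$ gives \eqref{eq:finite-energy-capacity}.

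The only point needing care is that each $X_i$ lands in $A$ itself and not merely in its closure or support. This is handled by drawing from $\mu$ restricted to $A$, which has full mass. Boundedness of $A$ is used only to ensure $0<C_R^{s,m}(A)<\infty$, so that the right-hand side is meaningful.
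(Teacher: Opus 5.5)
Your proof is correct, and it takes a different route from the paper's.

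\textbf{The paper's route.} The paper discretizes deterministically. It partitions a cube containing $A$ into subcubes of side $\delta_j\downarrow0$ and moves the mass $\sigma(A\cap Q)$ to a chosen point $x_Q\in A\cap Q$, which gives $\sigma_j=\sigma\circ T_j^{-1}$. The displacement bound $|T_jx-x|\le\sqrt d\,\delta_j$ and uniform continuity of $(x,y)\mapsto K_R^{s,m}(x-y)$ on $\overline A\times\overline A$ then show that the energies of $\sigma_j$ converge to that of $\sigma$.

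\textbf{Your route.} You average the energy of the empirical measure against $\mu^{\otimes k}$. This gives the exact identity $\int I(\lambda_k)\,d\mu^{\otimes k}=I(\mu)+\frac1k\bigl(1-I(\mu)\bigr)$. You then pick a configuration in $A^k$ where the integrand is at most its mean. Such a configuration exists because that set has positive $\mu^{\otimes k}$-measure and $A^k$ has full measure, so the two intersect; your last paragraph essentially says this.

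\textbf{What each approach buys.} Your argument uses only that the kernel is Borel and bounded, with finite diagonal value $K(0)=1$. It needs neither continuity of the kernel nor boundedness of $A$, and it yields an explicit $O(1/k)$ approximation by equal-weight atoms. It would therefore apply unchanged to discontinuous bounded kernels such as the truncated kernels $Q_\ell^s$ of Section~\ref{sec:projection}. The paper's construction is deterministic and keeps each atom tied to the mass of a small cube. It is the more standard route, but it depends on the continuity of $K_R^{s,m}$.
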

\begin{proof}

We may assume that $\nu(A)>0$ since the case $\nu(A)=0$ is immediate, and normalize $\nu$ to a probability measure
$
\sigma=\frac{\nu}{\nu(A)}.
$
Choose a cube containing $A$, and for each $j\ge1$ partition it into finitely many half-open cubes $Q$ of side length $\delta_j$, where $\delta_j\downarrow0$. For every $Q$ with $A\cap Q\neq\varnothing$, choose a point $x_Q\in A\cap Q$, and define
$
\sigma_j=\sum_Q \sigma(A\cap Q)\delta_{x_Q}.
$
Then $\sigma_j$ is a finitely supported probability measure on $A$.

Let $T_j:A\to A$ send each $x\in A\cap Q$ to $x_Q$. Since
$
|T_jx-x|\le \sqrt d\delta_j,
$
we have
$
|(T_jx-T_jy)-(x-y)|
\le 2\sqrt d\delta_j
$
for all $x,y\in A$.   Since \(A\) is bounded and \(K_R^{s,m}\) is continuous, the function \((x,y)\mapsto K_R^{s,m}(x-y)\) is uniformly continuous on \(\overline A\times\overline A\).   Hence
\[
\sup_{x,y\in A}
\left|
K_R^{s,m}(T_jx-T_jy)-K_R^{s,m}(x-y)
\right|
\longrightarrow0.
\]

As $\sigma_j=\sigma \circ T_j^{-1}$, it follows that

\[
\iint K_R^{s,m}(x-y)\,d\sigma_j(x)\,d\sigma_j(y)
\longrightarrow
\iint K_R^{s,m}(x-y)\,d\sigma(x)\,d\sigma(y).
\]
By the definition of $C_R^{s,m}(A)$,
\[
\frac1{C_R^{s,m}(A)}
\le
\iint K_R^{s,m}(x-y)\,d\sigma_j(x)\,d\sigma_j(y)
\]
for every $j$. Letting $j\to\infty$ gives
\[
\frac1{C_R^{s,m}(A)}
\le
\iint K_R^{s,m}(x-y)\,d\sigma(x)d\sigma(y).
\]
Multiplying by $\nu(A)^2$ yields \eqref{eq:finite-energy-capacity}.

The monotonicity statement follows because every finitely supported probability on $A$ is also one on $B$.
\end{proof}

The following proposition shows that the dimension profiles enjoy the same monotonicity and continuity properties as the macroscopic intermediate dimensions.
\begin{proposition}\label{prop:profile-shape}
Let $E\subset\R^d$ and $m\in\{1,\ldots,d\}$. Both $\theta\mapsto\lD_\theta^m(E)$ and
$\theta\mapsto\uD_\theta^m(E)$ are nondecreasing. If
$0<\theta<\varphi\le1$, then
\[
 0\le \lD^m_\varphi(E)-\lD^m_\theta(E)
 \le\left(1-\frac{\theta}{\varphi}\right)\bigl(m-\lD^m_\theta(E)\bigr).
\]
and
\[
 0\le \uD^m_\varphi(E)-\uD^m_\theta(E)
\le\left(1-\frac{\theta}{\varphi}\right)\bigl(m-\uD^m_\theta(E)\bigr).
\]
\end{proposition}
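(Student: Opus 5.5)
The plan mirrors the proof of Proposition~\ref{prop:dimension-continuity}, with capacities and kernel comparisons replacing covering sums.

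\emph{Monotonicity.} If $\theta\le\varphi$, then for $n\ge N$ we have $R'=2^{n-N\theta}\ge R=2^{n-N\varphi}$. The first inequality in \eqref{eq:kernel-comparisons} gives $K_R^{s,m}\le K_{R'}^{s,m}$, so every energy at parameter $\varphi$ is at most the corresponding energy at parameter $\theta$. Taking infima over $\Pf(E\cap S_n)$ and inverting, we get $C_{R'}^{s,m}(E\cap S_n)\le C_R^{s,m}(E\cap S_n)$, hence $\Ct_N^{s,\theta,m}(E)\le\Ct_N^{s,\varphi,m}(E)$. Every exponent admissible at $\varphi$ is therefore admissible at $\theta$, and both profiles are nondecreasing in $\theta$.

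\emph{Continuity estimate.} Fix $0<\theta<\varphi\le1$ and $0<s<m$, and put $t=m-(\theta/\varphi)(m-s)$, so that $s\le t<m$ and $\varphi(m-t)=\theta(m-s)$. The goal is the content comparison
\[
\Ct_N^{t,\varphi,m}(E)\le\Ct_N^{s,\theta,m}(E),
\]
up to a harmless constant. It suffices to prove the pointwise kernel inequality
\[
2^{nt}K_{R_\varphi}^{t,m}(z)\ \ge\ 2^{ns}K_{R_\theta}^{s,m}(z),\qquad R_\theta=2^{n-N\theta},\ R_\varphi=2^{n-N\varphi},
\]
for $|z|\lesssim2^n$; note that differences of points in $S_n$ satisfy $|z|\le2^{n+1}$. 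This inequality gives $2^{-nt}C_{R_\varphi}^{t,m}\le 2^{-ns}C_{R_\theta}^{s,m}$ shell by shell, and summing over $n\ge N$ gives the comparison.

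The kernel inequality is checked by cases on $|z|$.
\begin{itemize}
\item For $|z|\le1$: the inequality reduces to $2^{n(t-s)}\ge1$, which holds since $t\ge s$.
\item For $1<|z|\le R_\varphi$: it reads $(2^n/|z|)^{t}\ge(2^n/|z|)^{s}$, which holds since $|z|\le2^n$.
\item For $R_\varphi<|z|\le R_\theta$: the left side is $2^{nt}R_\varphi^{m-t}|z|^{-m}$. Using $2^{n}/R_\varphi=2^{N\varphi}$ and $\varphi(m-t)=\theta(m-s)$, it equals $2^{nm}2^{-N\theta(m-s)}|z|^{-m}$. The right side is $2^{ns}|z|^{-s}$. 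Their ratio is $(2^n/|z|)^{m-s}\,2^{-N\theta(m-s)}=(R_\theta/|z|)^{m-s}\ge1$.
\item For $|z|>R_\theta$: the same computation shows the two sides are equal, namely $2^{nm}2^{-N\theta(m-s)}|z|^{-m}$ each.
\end{itemize}
This case analysis is the step to check carefully, in particular the range $|z|$ slightly above $2^n$, where a constant factor $2^{m}$ may enter.

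\emph{Passing to the profiles.} Let $D$ denote either profile at $\theta$, and assume $D<m$. By \eqref{eq:strict-capacity}, the set of exponents satisfying the defining liminf or limsup condition is upward closed in $(0,m)$, so every $s\in(D,m)$ satisfies it. The content comparison then shows that the corresponding $t$ satisfies the same condition at $\varphi$. Hence the profile at $\varphi$ is at most $m-(\theta/\varphi)(m-s)$. Letting $s\downarrow D$ gives the stated bound. If $D=m$, the bound is trivial because both profiles are at most $m$.

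A minor point: when $D=0$, exponents $s$ arbitrarily close to $0$ are available, so the limit $s\downarrow0$ is still legitimate.
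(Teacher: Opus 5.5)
Your proposal is correct and follows the paper's proof essentially step for step: the same monotonicity argument via the first kernel comparison, the same choice $t=m-(\theta/\varphi)(m-s)$ with the pointwise bound $K_{R_\theta}^{s,m}(z)\le 2^{n(t-s)}K_{R_\varphi}^{t,m}(z)$ giving $\Ct_N^{t,\varphi,m}(E)\le\Ct_N^{s,\theta,m}(E)$, and the same passage to the profiles using the upward closedness from \eqref{eq:strict-capacity}. Your caveat about $|z|$ slightly above $2^n$ is unnecessary: since $R_\varphi\le R_\theta\le 2^n$, every such $z$ lies in your last case, where the two sides are equal, so the kernel inequality holds for all $z\in\R^d$ with no constant.
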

\begin{proof}
Increasing $\theta$ reduces $R=2^{n-N\theta}$. By the first comparison
in \eqref{eq:kernel-comparisons}, this decreases the kernel and its
minimum energy, and hence increases capacity. Thus
$\Ct_N^{s,\theta,m}(E)$ is nondecreasing in $\theta$ for every $s$ and
$N$. The definitions of the two dimension profiles then give their monotonicity.

We prove the lower estimate; the upper one is identical with
$\limsup$ in place of $\liminf$. Fix $0<\theta<\varphi\le1$ and $0<s<m$, and put
\[
 t=m-\frac{\theta}{\varphi}(m-s),\quad
 R_\theta=2^{n-N\theta},\quad R_\varphi=2^{n-N\varphi}.
\]
Then $s<t<m$, $1\le R_\varphi\le R_\theta$, and
$\varphi(m-t)=\theta(m-s)$. Writing $r=|z|$, we obtain
\[
 \frac{K_{R_\theta}^{s,m}(z)}{K_{R_\varphi}^{t,m}(z)}
 =\begin{cases}
 1,&r\le1,\\
 r^{t-s},&1<r\le R_\varphi,\\
 r^{m-s}R_\varphi^{t-m},&R_\varphi<r\le R_\theta,\\
 R_\theta^{m-s}R_\varphi^{t-m},&r>R_\theta.
 \end{cases}
\]
The last two expressions are at most
$
 R_\theta^{m-s}R_\varphi^{t-m}
 =2^{n(t-s)-N[\theta(m-s)-\varphi(m-t)]}
 =2^{n(t-s)}.
$
The same bound holds for the first two expressions, since
$R_\varphi\le2^n$. Consequently,
$K_{R_\theta}^{s,m}(z)\le2^{n(t-s)}K_{R_\varphi}^{t,m}(z).$
Integrating against $\mu\times\mu$, taking the infimum over $\mu\in\Pf(E\cap S_n)$, and then taking reciprocals gives $
 2^{-nt}C_{R_\varphi}^{t,m}(E\cap S_n)
 \le2^{-ns}C_{R_\theta}^{s,m}(E\cap S_n).$
For empty shells both sides are zero. Summing over $n\ge N$ yields
\begin{equation}\label{capbds}
\Ct_N^{t,\varphi,m}(E)\le\Ct_N^{s,\theta,m}(E).
\end{equation}

Let $D^m$ denote either dimension profile. If $D^m_\theta(E)<m$, choose
$D^m_\theta(E)<s<m$. The definition yields an exponent \(0<u<s\) at which the corresponding logarithmic growth rate is nonpositive. The comparison
$
\Ct_N^{s,\theta,m}(E)
\le 2^{-N\theta(s-u)}\Ct_N^{u,\theta,m}(E)
$ then shows that the corresponding lower or upper logarithmic growth rate at \(s\) is at most \(-\theta(s-u)<0\). Thus \(s\) also satisfies the defining condition.
The inequality \eqref{capbds} gives the same conclusion at $t$ and parameter
$\varphi$, whence $D^m_\varphi(E)\le t$.
Letting $s\downarrow D^m_\theta(E)$ gives
$
 D^m_\varphi(E)\le
 m-\frac{\theta}{\varphi}\bigl(m-D^m_\theta(E)\bigr),
$
which is the required upper bound. If $D^m_\theta(E)=m$,
monotonicity and the bound $D^m_\varphi(E)\le m$ give equality.
\end{proof}

\subsection{Covering functionals and capacity estimates}\label{sec:mixed}
For $R\ge1$ and $0<v<m$ define a cost on radii $r\ge1$ by
\[
 \psi_R^{v,m}(r)=
 \begin{cases}r^v,&1\le r\le R,\\ r^mR^{v-m},&r>R,\end{cases}
\]

For a nonempty bounded set $A\subset\R^d$, define
\[
 \Mt_R^{v,m}(A)=
 \inf\left\{\sum_{i=1}^k\psi_R^{v,m}(r_i):
 \begin{array}{l}
 k\in\N,\quad A\subset\displaystyle\bigcup_{i=1}^k B(x_i,r_i),\\
 x_i\in A,\quad r_i\ge1
 \end{array}\right\}.
\]
Set $\Mt_R^{v,m}(\varnothing)=0$. Allowing countable covers gives the same value, since
$\psi_R^{v,m}(r)\ge1$ for $r\ge1$.
The identity $\psi_R^{v,m}(r)=1/K_R^{v,m}(r)$ holds for $r\ge1$.
For a fixed target dimension $m$, we also write $\psi_R^v$ and $\Mt_R^v$.
The term \(r^mR^{v-m}\) corresponds to covering an \(m\)-dimensional projected ball of radius comparable to \(r\) by balls of radius \(R\). It should not be interpreted as subdividing a \(d\)-dimensional source ball when \(m<d\).

\begin{lemma}\label{lem:mixed}
If $A\subset \R^d$ is  bounded and nonempty, $R\ge1$ and $0<u<v<m$, then
\begin{equation}
 \Mt_R^{v,m}(A)\le
 C_{u,v,m}R^{v-u}
 \left(1+\log_+\frac{2+\diam A}{R}\right)C_R^{u,m}(A).
 \label{eq:mixed-general}
\end{equation}
If $A\subset S_n$, $n\ge N$ and $R=2^{n-N\theta}$, then
\begin{equation}
 2^{-nv}\Mt_R^{v,m}(A)
 \le C_{u,v,m}(N+1)2^{-N\theta(v-u)}
             2^{-nu}C_R^{u,m}(A).
 \label{eq:mixed-outer}
\end{equation}
\end{lemma}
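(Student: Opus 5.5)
The plan is to prove \eqref{eq:mixed-general} by duality: build a discrete Frostman-type measure $\mu$ on $A$ whose mass is comparable to $\Mt_R^{v,m}(A)$ and whose ball masses are bounded by the cost $\psi_R^{v,m}$. Then I bound its $K_R^{u,m}$-energy by a dyadic layer-cake computation. Estimate \eqref{eq:mixed-outer} then follows by substituting the shell parameters.

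\emph{Step 1 (discrete Frostman lemma for the cost $\psi=\psi_R^{v,m}$).} Take the dyadic cubes of side $2^k$, $k\ge0$, in a fixed grid. In each side-one cube $Q$ meeting $A$, put an atom of mass $\psi(1)=1$ at a chosen point of $A\cap Q$. Then proceed upward through the generations in Mattila's manner: whenever the mass inside a cube $Q$ of side $2^k$ exceeds $\psi(2^k)$, rescale the measure restricted to $Q$ down to total mass $\psi(2^k)$.

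The result is a finitely supported measure $\mu$ on $A$ with $\mu(Q)\le\psi(\operatorname{side}Q)$ for every dyadic $Q$ of side $\ge1$. Moreover, every point of $A$ lies in a maximal cube $Q$ with $\mu(Q)=\psi(\operatorname{side}Q)$. These maximal cubes cover $A$, and each has side at most about $2+\diam A$. Replacing each cube by a ball centred at a point of $A\cap Q$ with radius $\asymp\operatorname{side}Q$ gives an admissible cover. Since $\psi(\lambda r)\le\lambda^m\psi(r)$ for $\lambda\ge1$, this yields
\[
\Mt_R^{v,m}(A)\lesssim_{d,m}\mu(A).
\]
Conversely, a ball $B(x,r)$ with $r\ge1$ meets $O_d(1)$ dyadic cubes of side comparable to $r$. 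The doubling property of $\psi$ then gives $\mu(B(x,r))\lesssim\psi(r)$ for all $x$ and all $r\ge1$.

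\emph{Step 2 (energy bound).} Fix $x$ and split $\int K_R^{u,m}(x-y)\,d\mu(y)$ into the unit ball and the dyadic annuli $2^{j-1}<|x-y|\le2^j$ for $1\le 2^j\lesssim 2+\diam A$. The unit ball contributes at most $\mu(B(x,1))\lesssim1\le R^{v-u}$. The annuli are handled in two ranges.
\begin{itemize}
\item For $2^j\le R$, an annulus contributes $\lesssim 2^{-ju}\psi(2^j)=2^{j(v-u)}$. This is a geometric sum dominated by its top term, so these annuli contribute $\lesssim R^{v-u}$ in total.
\item For $2^j>R$, an annulus contributes $\lesssim R^{m-u}2^{-jm}\cdot2^{jm}R^{v-m}=R^{v-u}$. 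There are at most $\lesssim 1+\log_+\frac{2+\diam A}{R}$ such annuli.
\end{itemize}
Integrating in $x$ gives energy at most $C R^{v-u}\bigl(1+\log_+\frac{2+\diam A}{R}\bigr)\mu(A)$. Apply the definition of capacity to $\mu/\mu(A)\in\Pf(A)$ (or Lemma~\ref{lem:finite-energy}):
\[
C_R^{u,m}(A)\ge\frac{\mu(A)^2}{\text{energy}}\gtrsim\frac{\mu(A)}{R^{v-u}\bigl(1+\log_+\frac{2+\diam A}{R}\bigr)}.
\]
Combining this with Step 1 proves \eqref{eq:mixed-general}.

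\emph{Step 3 (the shell form).} For $A\subset S_n$ we have $\diam A\le2^{n+1}$. With $R=2^{n-N\theta}$, this gives $\log_+\frac{2+\diam A}{R}\lesssim N\theta+1\le N+1$. Also
\[
2^{-nv}R^{v-u}=2^{-nu}2^{-N\theta(v-u)}.
\]
Multiplying \eqref{eq:mixed-general} by $2^{-nv}$ therefore yields \eqref{eq:mixed-outer}.

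The main obstacle is Step 1. One must check carefully that the greedy construction yields both the upper bound on ball masses and the lower bound $\mu(A)\gtrsim\Mt_R^{v,m}(A)$. The delicate points are the requirement that centres lie in $A$, and the treatment of the top scale, where one cube, or boundedly many cubes, of side $\asymp\diam A$ must be allowed. If a clean cube version proves awkward, I would instead use the ``$\psi$-saturated cubes'' argument from the proof of Frostman's lemma in \cite{Mattila}, with the Hausdorff gauge replaced by $\psi$.
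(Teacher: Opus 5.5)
Your proof is correct, but it runs in the opposite direction from the paper's. The paper never builds a Frostman measure. It first treats finite $A$ and takes an energy minimizer $\mu$ for $K_R^{u,m}$; the first variation gives $U^\mu(x)\ge I=1/C_R^{u,m}(A)$ at every $x\in A$. It then splits $U^\mu(x)$ into the same dyadic annuli you use. Setting $W=\sum_j K_R^{u,m}(2^j)\psi_R^{v,m}(2^j)$, it obtains for each $x$ a radius $\rho_x$ with $\mu(\overline B(x,\rho_x))\ge I\,\psi_R^{v,m}(\rho_x)/(2^mW)$. A Vitali selection of disjoint closed balls, followed by tripling, gives $\Mt_R^{v,m}(A)\le 6^mW/I$ directly, and a $1/4$-net passes from finite to general bounded $A$.

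Both proofs rest on the identity $K_R^{u,m}(\rho)\psi_R^{v,m}(\rho)=\min\{\rho,R\}^{v-u}$ for $\rho\ge1$, summed over dyadic $\rho$; this is where both $R^{v-u}$ and the factor $1+\log_+\frac{2+\diam A}{R}$ come from. You use it to bound the energy of a measure satisfying $\mu(B(x,r))\lesssim\psi_R^{v,m}(r)$. The paper uses it to extract heavy balls from an equilibrium measure.

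The paper's route never refers to a grid in $\R^d$, so its constant really is $C_{u,v,m}$, as stated. Your dyadic-cube construction brings in factors such as $3^d$ (cubes of comparable side meeting a ball), $2^d$ (top-level cubes, since no single dyadic cube need contain $A$) and $d^{m/2}$ (a ball centred in $A\cap Q$ containing $Q$). You therefore prove the inequality with a constant $C_{d,u,v,m}$. This is harmless for every application in the paper, where $d$ is fixed, but it is not literally the stated constant.

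In exchange, your route needs neither the existence of a minimizer nor the reduction to finite sets, because the construction yields a finitely supported measure on any bounded $A$. It also gives a reusable discrete Frostman lemma, $\mu(A)\asymp\Mt_R^{v,m}(A)$. The reverse bound $\mu(A)\lesssim\Mt_R^{v,m}(A)$ is immediate from the ball-mass estimate.

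The point you flagged does go through. The invariant ``every unit cube meeting $A$ lies in a saturated cube for the current measure'' is preserved at each rescaling step, so the maximal saturated cubes cover $A$.
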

\begin{proof}
First suppose that $A$ is finite. The probability measures on $A$ form
a compact simplex, and the energy is continuous on it, so there is a
minimizer $\mu$. Write
\[
 I=\iint K_R^{u,m}(x-y)\,d\mu(x)d\mu(y)
   =\frac1{C_R^{u,m}(A)},\quad
 U^\mu(x)=\int K_R^{u,m}(x-y)\,d\mu(y).
\]
For $x\in A$, the measures $(1-\varepsilon)\mu+\varepsilon\delta_x$
are admissible for $0\le\varepsilon\le1$. The right derivative of their
energy at $\varepsilon=0$ is $2(U^\mu(x)-I)$ and must be nonnegative.
Hence $U^\mu(x)\ge I>0$ for every $x\in A$.

Put $D=\diam A$, $L=\lceil\log_2(2+D)\rceil$, and
$\rho_j=2^j$ for $0\le j\le L$. Decompose the integral defining
$U^\mu(x)$ into $|x-y|\le1$ and
$2^{j-1}<|x-y|\le2^j$. The definition of the kernel gives
$K_R^{u,m}(\rho/2)\le2^mK_R^{u,m}(\rho)$, so
\[
 I\le U^\mu(x)\le
 2^m\sum_{j=0}^L K_R^{u,m}(\rho_j)
                  \mu(\overline B(x,\rho_j)).
\]
Let $\delta=v-u>0$. The sum of the corresponding kernel--cost products is
\[
 \begin{split}
 W&:=\sum_{j=0}^L K_R^{u,m}(\rho_j)\psi_R^{v,m}(\rho_j)\\
  &=\sum_{\rho_j\le R}\rho_j^\delta
    +\sum_{\rho_j>R}R^\delta\\
  &\le C_{u,v,m}R^{v-u}
       \left(1+\log_+\frac{2+D}{R}\right).
 \end{split}
\]
Here and in the sequel $\log_+a=\max\{0,\log a\}$: the first sum is geometric, and
the number of terms in the second is at most
$C(1+\log_+((2+D)/R))$, since $2^L\le2(2+D)$.
The preceding potential bound implies
$
 I\le2^mW\max_{0\le j\le L}
       \frac{\mu(\overline B(x,\rho_j))}{\psi_R^{v,m}(\rho_j)}.
$
Thus, for each $x\in A$, we may choose a radius $\rho_x$ such that
\[
 \mu(\overline B(x,\rho_x))\ge
       \frac{I}{2^mW}\psi_R^{v,m}(\rho_x).
\]
Select the auxiliary \emph{closed} balls in nonincreasing order of radius, keeping a ball only when it is disjoint from every previously selected closed ball. Write the selected balls as $\overline B_i=\overline B(x_i,\rho_i)$. If the ball associated with $x\in A$ is discarded, it meets a selected ball with radius $\rho_i\ge\rho_x$. Hence
$
 |x-x_i|\le\rho_x+\rho_i\le2\rho_i<3\rho_i.
$
The selected centres satisfy the same covering conclusion. Thus the \emph{open} balls $B(x_i,3\rho_i)$ cover $A$. Since
$\psi_R^{v,m}(3r)\le3^m\psi_R^{v,m}(r)$, the corresponding mixed covering sum satisfies
\[
 \Mt_R^{v,m}(A)
 \le3^m\sum_i\psi_R^{v,m}(\rho_i)\le\frac{6^mW}{I}\sum_i\mu(\overline B_i)
 \le\frac{6^mW}{I}.
\]
The last inequality uses disjointness of the closed balls. In particular, atoms on their boundaries are not lost. The estimate for $W$ proves \eqref{eq:mixed-general} when $A$ is finite.

For a general bounded nonempty $A$, choose a finite $1/4$-net
$F\subset A$, so every point of $A$ is within $1/4$ of $F$.
Apply the finite-set argument to $F$ and enlarge every covering radius
$r\ge1$ by $1/2$. The enlarged balls cover $A$, still have centres in
$A$, and satisfy
$
 \psi_R^{v,m}(r+1/2)\le(3/2)^m\psi_R^{v,m}(r).
$
Since $\diam F\le\diam A$ and
$C_R^{u,m}(F)\le C_R^{u,m}(A)$, this proves
\eqref{eq:mixed-general} in general, after adjusting the constant.

Finally, if $A\subset S_n$ and $R=2^{n-N\theta}$, then
$\diam A\le2^{n+1}$ and
\[
 1+\log_+\frac{2+\diam A}{R}
 \le1+\log(4\cdot2^{N\theta})\le C(N+1).
\]
Multiplying \eqref{eq:mixed-general} by $2^{-nv}$ and using $2^{-nv}R^{v-u}=2^{-N\theta(v-u)}2^{-nu}$
gives \eqref{eq:mixed-outer}.
\end{proof}

For a bounded set $A\subset\R^d$, let $\Ht_R^s(A)$ be the infimum of
$\sum_i r_i^s$ over covers with centres in $A$ and $1\le r_i\le R$.
If $A\subset S_n$, allowing centres anywhere in $S_n$ changes this
infimum by at most a dimensional constant. Indeed, for each ball
$B(x,r)$ of such a cover, a maximal $(r/2)$-separated subset of
$A\cap B(x,r)$ has at most $C_d$ points; the radius-$r$ balls centred
at these points cover $A\cap B(x,r)$ and still have legal radii.

\begin{lemma}\label{lem:ambient-cover}
If $0<s<m\le d$, then
\begin{equation}
 2^{-m} C_R^{s,m}(A)\le\Ht_R^s(A).
 \label{eq:cap-below-cover}
\end{equation}
If $0<u<v<d$, then
\begin{equation}
  \mathcal{H}_{n,N}^{v,\theta}(E)
 \le C_{d,u,v}(N+1)2^{-N\theta(v-u)}
      2^{-nu}C_{2^{n-N\theta}}^{u,d}(E\cap S_n).
 \label{eq:ambient-cover-above}
\end{equation}
\end{lemma}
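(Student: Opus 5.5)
For \eqref{eq:cap-below-cover} we use a direct energy lower bound. Fix an admissible cover $A\subset\bigcup_{i=1}^k B(x_i,r_i)$ with $x_i\in A$ and $1\le r_i\le R$; we may take it finite. Let $\mu\in\Pf(A)$ be arbitrary. Two points $x,y$ in the same ball $B(x_i,r_i)$ satisfy $|x-y|<2r_i\le 2R$. Since $K_R^{s,m}$ is radially nonincreasing, $K_R^{s,m}(x-y)\ge K_R^{s,m}(2r_i)$. The case analysis on $2r_i$ gives $K_R^{s,m}(2r_i)\ge 2^{-m}r_i^{-s}$: for $2r_i\le R$ it equals $2^{-s}r_i^{-s}$, and for $2r_i>R$ it equals $R^{m-s}(2r_i)^{-m}\ge 2^{-m}r_i^{-s}$ because $r_i\le R$. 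Hence
\[
 \iint K\,d\mu\,d\mu\ \ge\ \sum_i\iint_{B_i\times B_i}\ldots\ \text{is not directly available},
\]
since the balls overlap. We therefore first disjointify the cover: set $A_i=A\cap B_i\setminus\bigcup_{j<i}B_j$, so the $A_i$ partition $A$. Then
\[
 \iint K_R^{s,m}\,d\mu\,d\mu\ \ge\ \sum_i 2^{-m}r_i^{-s}\mu(A_i)^2\ \ge\ 2^{-m}\Bigl(\sum_i r_i^s\Bigr)^{-1},
\]
where the last step is Cauchy--Schwarz applied to $1=\sum_i\mu(A_i)$. Taking the infimum over $\mu$ and over covers yields $C_R^{s,m}(A)\le 2^m\Ht_R^s(A)$. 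For empty $A$ both sides vanish.

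For \eqref{eq:ambient-cover-above} we apply Lemma~\ref{lem:mixed} with $m=d$, which is permitted since that lemma only used $0<u<v<m$ and the kernel in $\R^d$. For nonempty $A=E\cap S_n$ and $R=2^{n-N\theta}$, \eqref{eq:mixed-outer} bounds $2^{-nv}\Mt_R^{v,d}(A)$ by the right-hand side. It remains to show that $2^{-nv}\Mt_R^{v,d}(A)$ dominates $\mathcal H_{n,N}^{v,\theta}(E)$ up to a constant $C_d$.

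Take a mixed cover with centres in $A\subset S_n$. Each ball with $r_i\le R$ is already admissible and has cost $r_i^v$. A ball with $r_i>R$ is replaced as follows. We take a maximal $R/2$-separated subset of $A\cap B(x_i,r_i)$. By volume comparison in $\R^d$ it has at most $C_d(r_i/R)^d$ points, and the radius-$R$ balls centred at these points lie in $S_n$ and cover $A\cap B(x_i,r_i)$. Their total $v$-cost is at most $C_d r_i^dR^{v-d}=C_d\psi_R^{v,d}(r_i)$. This is exactly why the full dimension $m=d$ is needed here: the cost term $r^dR^{v-d}$ matches a genuine subdivision in $\R^d$.

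Dividing by $2^{nv}$ and taking the infimum gives $\mathcal H_{n,N}^{v,\theta}(E)\le C_d2^{-nv}\Mt_R^{v,d}(A)$, and combining with \eqref{eq:mixed-outer} finishes the proof. Empty shells are trivial.

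The main obstacle is the overlap bookkeeping in the first inequality, which is resolved by the disjointification and the Cauchy--Schwarz step.
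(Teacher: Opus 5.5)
Your proposal is correct and follows essentially the same route as the paper. For \eqref{eq:cap-below-cover} you assign each point to the first ball containing it, discard the cross terms, use $K_R^{s,m}(2r_i)\ge 2^{-m}r_i^{-s}$ and apply Cauchy--Schwarz. For \eqref{eq:ambient-cover-above} you replace each ball with $r_i>R$ by radius-$R$ balls centred at a maximal $R/2$-separated subset, which gives $\mathcal H_{n,N}^{v,\theta}(E)\le C_d2^{-nv}\Mt_R^{v,d}(E\cap S_n)$, and then apply \eqref{eq:mixed-outer} with $m=d$.

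Only cosmetic fixes are needed: delete the false-start display containing ``is not directly available'', and say that the \emph{centres} of the replacement balls, not the balls themselves, lie in $S_n$.
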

\begin{proof}
For the first assertion, fix an admissible cover
$A\subset\bigcup_i B(x_i,r_i)$, where $1\le r_i\le R$, and
$\mu\in\Pf(A)$. Partition the finite support of $\mu$ into
disjoint sets $A_i\subset B(x_i,r_i)$, assigning each point to the
first ball containing it. For $x,y\in A_i$, we have $|x-y|<2r_i$.
Since the kernel is nonincreasing in distance,
\[
 K_R^{s,m}(x-y)\ge K_R^{s,m}(2r_i)
 =\begin{cases}
 (2r_i)^{-s},&2r_i\le R,\\
 R^{m-s}(2r_i)^{-m},&2r_i>R,
 \end{cases}
 \ge2^{-m}r_i^{-s}.
\]
In the second case we used $R\ge r_i$ and $m>s$.
Discarding the nonnegative cross terms and applying Cauchy--Schwarz gives
\[
 \begin{split}
 \iint K_R^{s,m}\,d\mu\,d\mu
 \ge2^{-m}\sum_i r_i^{-s}\mu(A_i)^2
 \ge2^{-m}\frac{(\sum_i\mu(A_i))^2}{\sum_i r_i^s}
 =\frac{2^{-m}}{\sum_i r_i^s}.
 \end{split}
\]
Taking the infimum over $\mu$ and then reciprocals yields
$C_R^{s,m}(A)\le2^m\sum_i r_i^s$.
The infimum over admissible covers proves
\eqref{eq:cap-below-cover}.

For the second assertion, put $A_n=E\cap S_n$ and
$R=2^{n-N\theta}$. Consider any mixed cover
$A_n\subset\bigcup_i B(x_i,r_i)$ with $x_i\in A_n$ and $r_i\ge1$.
Keep every ball with $r_i\le R$. For a ball with $r_i>R$, choose a
maximal $R/2$-separated subset $Y_i$ of $A_n\cap B(x_i,r_i)$.
Maximality implies
$
 A_n\cap B(x_i,r_i)\subset\bigcup_{y\in Y_i}B(y,R).
$
The balls $B(y,R/4)$, $y\in Y_i$, are pairwise disjoint and lie in
$B(x_i,r_i+R/4)$. Comparing their Euclidean volumes gives
\[
 \#Y_i\le\left(\frac{r_i+R/4}{R/4}\right)^d
 \le5^d\left(\frac{r_i}{R}\right)^d.
\]
Thus the replacement balls contribute at most
$
\#Y_i(R/2^n)^v
\le
5^d2^{-nv}\psi_R^{v,d}(r_i)
$
to the normalized covering sum defining $\mathcal H_{n,N}^{v,\theta}(E)$.
All retained and replacement balls have centres in $A_n\subset S_n$
and radii in $[1,R]$, so they are admissible for the shell content.
After summing their costs, normalizing by $2^{nv}$, and taking the
infimum over mixed covers, we obtain
$
 \mathcal H_{n,N}^{v,\theta}(E)
 \le C_d2^{-nv}\Mt_R^{v,d}(A_n).
$
Apply \eqref{eq:mixed-outer} with $m=d$ to conclude
\eqref{eq:ambient-cover-above}. Empty sets or shells give zero on both
sides and require no separate argument.
\end{proof}

\subsection{Endpoint identities and profile bounds}
We use the energy theorem of Daw and Seuret \cite[Theorem~2.7(2)]{DS}.
For a Borel set $E\subset\R^d$ and $0<s<\hDim E$, there is a Radon
measure $\mu$, concentrated on $E$, with
\begin{equation}
 \mu(E)=\infty,\quad
 \sum_{n\ge0}2^{ns}I_s(\mu|_{S_n})<\infty,
 \quad I_s(\nu)=\iint(1\vee|x-y|)^{-s}\,d\nu(x)d\nu(y).
 \label{eq:DS-energy}
\end{equation}
Indeed, choose $s<q<\hDim E$. The macroscopic $q$-content series
diverges, so the cited theorem, with loss parameter $\varepsilon=q-s$,
gives a Radon measure $\nu$ with $\nu(E)=\infty$ and finite weighted
$s$-energy as in \eqref{eq:DS-energy}. Restricting to $\mu=\nu|_E$
preserves infinite mass and can only decrease each energy; the restricted
measure is Radon since $E$ is Borel. The cited theorem is valid in $\R^d$.
The corresponding high-dimensional projection formula will be proved in
Section~\ref{sec:projection}, rather than assumed here.

\begin{proposition}\label{prop:recovery}
For Borel $E\subset\R^d$, $m\in\{1,\ldots,d\}$, and $0\le\theta\le1$,
\begin{equation}
 \lD^m_0(E)=\uD^m_0(E)=\min\{\hDim E,m\},\label{eq:cap-zero}
 \end{equation}
 
 \begin{equation}
 \lD^d_\theta(E)=\Dt^-_\theta(E),\quad
 \uD^d_\theta(E)=\Dt^+_\theta(E),\label{eq:ambient-recovery}
  \end{equation}
  
  \begin{equation}
  \max\left\{\min(\hDim E,m),\frac {m}{d}\Dt^-_\theta(E)\right\}\le\lD^m_\theta(E)\le\min\{\Dt^-_\theta(E),m\}
 \label{eq:source-upper}
 \end{equation}
and
 \[
 \max\left\{\min(\hDim E,m),\frac {m}{d}\Dt^+_\theta(E)\right\}\le
 \uD^m_\theta(E)\le\min\{\Dt^+_\theta(E),m\}.
\]
\end{proposition}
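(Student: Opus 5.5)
We prove the three displays in the order \eqref{eq:ambient-recovery}, then the upper bounds and the $\frac md$ lower bounds in \eqref{eq:source-upper}, and finally \eqref{eq:cap-zero}, from which the $\min(\hDim E,m)$ lower bounds follow. Throughout, a factor that is polynomial in $N$, such as $N+1$, or a fixed constant does not change the normalized rates $\frac{1}{N\log 2}\log(\cdot)$. A positive power $(\cdot)^{d/m}$ preserves whether such a rate is nonpositive. Both observations apply equally to $\liminf$ and $\limsup$, so the lower and upper cases are treated simultaneously.

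\emph{Ambient recovery and the upper bounds.} Put $A_n=E\cap S_n$ and $R=2^{n-N\theta}$. By \eqref{eq:cap-below-cover}, together with the recentering remark preceding Lemma~\ref{lem:ambient-cover}, we have $2^{-ns}C_R^{s,m}(A_n)\le C_d2^{m}\mathcal H_{n,N}^{s,\theta}(E)$ for every $m\le d$ and $0<s<m$. Summing over $n\ge N$ gives $\Ct_N^{s,\theta,m}(E)\lesssim\mathcal H_N^{s,\theta}(E)$. Hence every $s<m$ admissible for $\Dt^\pm_\theta$ is admissible for the profile, which gives $\Dpm^{\pm,m}_\theta(E)\le\min\{\Dt^\pm_\theta(E),m\}$. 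For $m=d$ the reverse inequality comes from \eqref{eq:ambient-cover-above}. Summing over $n$ gives
\[
\mathcal H_N^{v,\theta}(E)\le C(N+1)2^{-N\theta(v-u)}\Ct_N^{u,\theta,d}(E),
\]
and the prefactor is at most polynomial in $N$, also when $\theta=0$. So if $u<d$ is admissible for the profile, every $v\in(u,d)$ is admissible for $\Dt^\pm_\theta$. If the profile equals $d$, we use $\Dt^\pm_\theta\le d$ from Proposition~\ref{prop:basic-endpoints}.

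\emph{The bound $\frac md\Dt^\pm_\theta(E)\le\Dpm^{\pm,m}_\theta(E)$.} This is the step I expect to be the main obstacle. A direct comparison of the kernel $K_R^{s,m}$ with $K_R^{t,d}$ loses a factor $2^{cN\theta}$, so I would go through the mixed functional instead. Fix $u<v<m$ with $u$ admissible for the $m$-profile, and put $w=dv/m$. Take a near-optimal mixed cover of $A_n$ for $\Mt_R^{v,m}$. Keep the balls with $r\le R$. Replace each ball with $r>R$ by at most $5^d(r/R)^d$ balls of radius $R$, centred in $A_n$, exactly as in the proof of Lemma~\ref{lem:ambient-cover}. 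The key identity is
\[
2^{-nw}\psi_R^{w,d}(r)=\bigl(2^{-nv}\psi_R^{v,m}(r)\bigr)^{d/m}.
\]
Since $d/m\ge1$, we have $\sum a_i^{d/m}\le(\sum a_i)^{d/m}$. This yields
\[
\mathcal H_{n,N}^{w,\theta}(E)\lesssim\bigl(2^{-nv}\Mt_R^{v,m}(A_n)\bigr)^{d/m}.
\]
By \eqref{eq:mixed-outer} the right-hand side is at most $\bigl(C(N+1)2^{-nu}C_R^{u,m}(A_n)\bigr)^{d/m}$. Applying the same superadditivity inequality to the sum over $n\ge N$ gives
\[
\mathcal H_N^{w,\theta}(E)\lesssim\bigl((N+1)\Ct_N^{u,\theta,m}(E)\bigr)^{d/m}.
\]
Hence $w$ is admissible for $\Dt^\pm_\theta$, and letting $v\downarrow u$ and then $u\downarrow\Dpm^{\pm,m}_\theta$ proves the bound. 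When the profile equals $m$ the bound is trivial, since $\Dt^\pm_\theta\le d$.

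\emph{Endpoint identity \eqref{eq:cap-zero}.} The upper bound $\Dpm^{\pm,m}_0\le\min\{\hDim E,m\}$ follows from the previous paragraph together with $\Dt^\pm_0=\hDim E$. For the lower bound, let $s<\min\{\hDim E,m\}$ and take the measure $\mu$ from \eqref{eq:DS-energy}, with $\mu_n=\mu|_{S_n}$. At $\theta=0$ we have $R=2^n$, and $|x-y|\le2^{n+1}$ on $S_n$, so $K_{2^n}^{s,m}(x-y)\le2^m(1\vee|x-y|)^{-s}$ there. Lemma~\ref{lem:finite-energy} then gives
\[
2^{-ns}C_{2^n}^{s,m}(A_n)\gtrsim\frac{\mu(S_n)^2}{2^{ns}I_s(\mu_n)}.
\]
By Cauchy--Schwarz, $\sum_{n\ge N}\mu(S_n)$ is at most the square root of the tail of the right-hand sides times the square root of $\sum 2^{ns}I_s(\mu_n)<\infty$. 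Since $\mu(E)=\infty$ and each $\mu(S_n)$ is finite, every tail $\sum_{n\ge N}\mu(S_n)$ is infinite. Therefore $\Ct_N^{s,0,m}(E)=\infty$ for every $N$, so $s$ is inadmissible for both profiles.

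Finally, the monotonicity of the profiles in $\theta$ (Proposition~\ref{prop:profile-shape}) transfers the bound $\min\{\hDim E,m\}\le\Dpm^{\pm,m}_0$ to all $\theta\in[0,1]$, completing \eqref{eq:source-upper} and its upper analogue.
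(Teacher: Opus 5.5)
Your proof is correct. For \eqref{eq:cap-zero}, the upper bounds in \eqref{eq:source-upper}, and \eqref{eq:ambient-recovery} you follow essentially the paper's route. That route uses Lemma~\ref{lem:ambient-cover} in both directions, the Daw--Seuret measure combined with Lemma~\ref{lem:finite-energy} and Cauchy--Schwarz, and monotonicity from Proposition~\ref{prop:profile-shape}. Handling the reverse ambient inequality at $\theta=0$ directly through the polynomial prefactor is fine. The paper instead assumes $\theta>0$ there and recovers $\theta=0$ from \eqref{eq:cap-zero} and Proposition~\ref{prop:basic-endpoints}. One small slip: the upper half of \eqref{eq:cap-zero} comes from your \emph{first} paragraph, not from the one immediately preceding it.

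The genuine difference is in the bound $\frac md\Dt^\pm_\theta(E)\le\Dpm^{\pm,m}_\theta(E)$. The paper argues at the level of kernels. With $\alpha=m/d$ one has exactly $K_R^{\alpha t,m}=(K_R^{t,d})^\alpha$. Jensen's inequality then gives $C_R^{\alpha t,m}(A)\ge C_R^{t,d}(A)^\alpha$, and subadditivity of $x\mapsto x^\alpha$ gives $\Ct_N^{\alpha t,\theta,m}(E)\ge\Ct_N^{t,\theta,d}(E)^\alpha$. The ambient identity then converts positive $d$-capacity growth into the bound.

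You argue at the level of covers instead, via $2^{-nw}\psi_R^{w,d}=(2^{-nv}\psi_R^{v,m})^{d/m}$, Lemma~\ref{lem:mixed} in dimension $m$, and superadditivity of $x\mapsto x^{d/m}$. Since $\psi_R^{v,m}=1/K_R^{v,m}$ on $[1,\infty)$, your identity is literally the reciprocal of the paper's. Your version bypasses the ambient identity, at the price of the harmless factor $N+1$. In the paper that identity itself rests on Lemma~\ref{lem:mixed} in dimension $d$. The paper's comparison step is exact and purely potential-theoretic, and it isolates the change of dimension in a clean capacity inequality.

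In particular, your stated reason for avoiding the kernel route is mistaken. There is no $2^{cN\theta}$ loss, because the right comparison is the exact power identity, not a pointwise bound of one kernel by a multiple of the other.
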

\begin{proof}
We first prove \eqref{eq:cap-zero}.
Fix $0<s<m$ and set
$
 b_n=2^{-ns}C_{2^n}^{s,m}(E\cap S_n).
$
Then
$
 \Ct_N^{s,0,m}(E)=\sum_{n\ge N}b_n.
$
If $\hDim E<m$, choose $s$ with $\hDim E<s<m$. Lemma~\ref{lem:ambient-cover}
and the definition of the macroscopic Hausdorff dimension give
$
 \sum_{n\ge1}b_n\lesssim \sum_{n\ge1}\mathcal H_n^s(E)<\infty.
$
Hence the tails $\Ct_N^{s,0,m}(E)$ tend to zero, so
$\uD_0^m(E)\le s$. Letting $s\downarrow\hDim E$ gives the
required upper bound. If $\hDim E\ge m$, the same upper bound follows
immediately from $\lD_0^m(E),\uD_0^m(E)\le m$.

For the reverse inequality, let
$0<s<\min\{\hDim E,m\}$. By \eqref{eq:DS-energy}, there exists a Radon
measure $\mu$ concentrated on $E$ such that
$
 \mu(E)=\infty
 $ and 
 $
 \sum_{n\ge1}2^{ns}I_s(\mu|_{S_n})<\infty.
$
Write $a_n=\mu(S_n)$ for $n\ge1$. Since $\mu(S_0)<\infty$, one still has $\sum_{n\ge1}a_n=\infty$. Since $x,y\in S_n$ implies
$|x-y|\le 2^{n+1}$, the kernel $K_{2^n}^{s,m}(x-y)$ is comparable to
$(1\vee |x-y|)^{-s}$, with constants depending only on $s$ and $m$.
Lemma~\ref{lem:finite-energy} therefore gives, whenever $a_n>0$,
$
 2^{ns}I_s(\mu|_{S_n})\gtrsim \frac{a_n^2}{b_n}.
$
If $\sum_n b_n<\infty$, then Cauchy--Schwarz yields
\[
 \sum_{n\ge1}a_n
 \lesssim
 \left(\sum_{n\ge1}b_n\right)^{1/2}
 \left(\sum_{n\ge1}2^{ns}I_s(\mu|_{S_n})\right)^{1/2}
 <\infty,
\]
contradicting $\mu(E)=\infty$. Thus $\sum_n b_n=\infty$, so
$\Ct_N^{s,0,m}(E)=\infty$ for every $N$. Consequently
$\lD_0^m(E)\ge s$. Letting
$s\uparrow\min\{\hDim E,m\}$ proves \eqref{eq:cap-zero}.

We next prove \eqref{eq:ambient-recovery}.
By Lemma~\ref{lem:ambient-cover},
$
 \Ct_N^{s,\theta,d}(E)\lesssim \Ht_N^{s,\theta}(E),
$
and hence
$
 \lD_\theta^d(E)\le \Dt^-_\theta(E).
$
Assume now that $\theta>0$. If $\lD_\theta^d(E)=d$, the reverse
inequality is automatic. Otherwise choose
$
 \lD_\theta^d(E)<u<v<d.
$
Summing \eqref{eq:ambient-cover-above} over $n\ge N$ gives
$
 \Ht_N^{v,\theta}(E)
 \lesssim
 (N+1)2^{-N\theta(v-u)}\Ct_N^{u,\theta,d}(E).
$
Since $u>\lD_\theta^d(E)$, the lower logarithmic growth rate of
$\Ct_N^{u,\theta,d}(E)$ is nonpositive. Therefore
\[
 \liminf_{N\to\infty}
 \frac{\log \Ht_N^{v,\theta}(E)}{N\log2}
 \le -\theta(v-u)<0,
\]
so $\Dt^-_\theta(E)\le v$. Letting
$v\downarrow\lD_\theta^d(E)$ gives
$
 \lD_\theta^d(E)=\Dt^-_\theta(E).
$
The proof of
$\uD_\theta^d(E)=\Dt^+_\theta(E)$ is identical, with $\limsup$ in
place of $\liminf$ (and is again immediate if $\uD_\theta^d(E)=d$\BlueChange{)}.
When $\theta=0$, both identities follow from
\eqref{eq:cap-zero} and Proposition~\ref{prop:basic-endpoints}. This proves
\eqref{eq:ambient-recovery}.

Finally, \BlueChange{we} prove \eqref{eq:source-upper}.
First, Lemma~\ref{lem:ambient-cover} gives, for $0<s<m$,
$
 \Ct_N^{s,\theta,m}(E)\lesssim \Ht_N^{s,\theta}(E).
$
It follows that
$
 \lD_\theta^m(E)\le \Dt^-_\theta(E).
$
Since $\lD_\theta^m(E)\le m$ by definition,
$
 \lD_\theta^m(E)\le \min\{\Dt^-_\theta(E),m\}.
$

For the first lower bound, Proposition~\ref{prop:profile-shape} shows
that $\theta\mapsto\lD_\theta^m(E)$ is nondecreasing. Together with
\eqref{eq:cap-zero}, this gives
\[
 \lD_\theta^m(E)
 \ge \lD_0^m(E)
 =\min\{\hDim E,m\}.
\]

It remains to prove the lower bound involving the ambient dimension.
Set $\alpha=\frac{m}{d}.$
For $0<t<d$, the definition of the kernels gives the pointwise identity
$
 K_R^{\alpha t,m}(z)=\bigl(K_R^{t,d}(z)\bigr)^\alpha.
$
Because $0<\alpha\le1$, Jensen's inequality implies that, for every
finitely supported probability measure $\mu$ on a bounded set $A$,
\[
 \iint K_R^{\alpha t,m}(x-y)\,d\mu(x)d\mu(y)
 \le
 \left(
 \iint K_R^{t,d}(x-y)\,d\mu(x)d\mu(y)
 \right)^\alpha.
\]
Taking the infimum over $\mu\in\Pf(A)$ and then reciprocals yields $
 C_R^{\alpha t,m}(A)
 \ge \bigl(C_R^{t,d}(A)\bigr)^\alpha.
$
Apply this with $A=E\cap S_n$ and $R=2^{n-N\theta}$. Then
$
 2^{-n\alpha t}C_R^{\alpha t,m}(E\cap S_n)
 \ge
 \left(2^{-nt}C_R^{t,d}(E\cap S_n)\right)^\alpha.
$
Since $0<\alpha\le1$ and
$(\sum_n a_n)^\alpha\le\sum_n a_n^\alpha$ for $a_n\ge0$, summing over
$n\ge N$ gives
\begin{equation}\label{eq:capacity-power-comparison}
 \Ct_N^{\alpha t,\theta,m}(E)
 \ge \bigl(\Ct_N^{t,\theta,d}(E)\bigr)^\alpha.
\end{equation}

If $\Dt^-_\theta(E)=0$, the desired bound is trivial. Otherwise let
$0<t<\Dt^-_\theta(E)$. By the ambient identity already proved,
$t<\lD_\theta^d(E)$, and hence
\[
 \liminf_{N\to\infty}
 \frac{\log\Ct_N^{t,\theta,d}(E)}{N\log2}>0.
\]
It follows from \eqref{eq:capacity-power-comparison} that
\[
 \liminf_{N\to\infty}
 \frac{\log\Ct_N^{\alpha t,\theta,m}(E)}{N\log2}>0,
\]
so
$
 \lD_\theta^m(E)\ge \alpha t=\frac{m}{d}t.
$
Letting $t\uparrow\Dt^-_\theta(E)$ yields
$
 \lD_\theta^m(E)\ge \frac{m}{d}\Dt^-_\theta(E).
$
Combining the two lower bounds with the upper bound proves
\eqref{eq:source-upper}.

The final displayed estimate for the upper profile is proved in the
same way, replacing $\liminf$ by $\limsup$ throughout and using
$\uD_\theta^d(E)=\Dt^+_\theta(E)$ in place of the lower-profile
ambient identity.
\end{proof}

\section{Proof of Theorem \ref{thm:projection}}\label{sec:projection}
Throughout this section $E\subset\R^d$ is Borel, $m\in \{1,\ldots,d\}$, and
$A_n=E\cap S_n$. We abbreviate $K_R^{s,m}$ and $C_R^{s,m}$ to $K_R^s$
and $C_R^s$. All integrals over $G(d,m)$ use $\gamma_{d,m}$.
Covering definitions apply to arbitrary target sets; no Borel or local
finiteness assumption on $P_VE$ is used.

\subsection{Grassmannian estimates}
We first record the two estimates that replace angular integration.
For $\ell\ge1$ and $0<s<m$, put
$Q_\ell^s(v)=(1\vee|v|)^{-s}\ind_{\{|v|\le \ell\}}$ on $V\in G(d,m)$.

\begin{lemma}\label{lem:grassmann}
For $z\ne0$ and $a>0$,
\begin{equation}
 \gamma_{d,m}\{V:|P_Vz|\le a\}
 \le C_{d,m}\min\{1,(a/|z|)^m\}.
 \label{eq:grassmann-small-ball}
\end{equation}
For $\ell\ge1$, $0<s<m$, and $z\in\R^d$,
\begin{equation}
 \int_{G(d,m)}Q_\ell^s(P_Vz)\,d\gamma_{d,m}(V)
 \le C_{d,m,s}K_\ell^{s,m}(z).
 \label{eq:truncated-average}
\end{equation}
\end{lemma}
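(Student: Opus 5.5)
The plan is to prove \eqref{eq:grassmann-small-ball} first, using rotation invariance, and then obtain \eqref{eq:truncated-average} from it by a layer-cake (dyadic) decomposition. Since $\gamma_{d,m}$ is rotation invariant, the distribution of $|P_Vz|$ under $\gamma_{d,m}$ equals that of $|z|\,|P_{V_0}\xi|$, where $V_0$ is a fixed $m$-plane and $\xi$ is uniformly distributed on the unit sphere $S^{d-1}$.

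For the first estimate it therefore suffices to show that $\sigma_{d-1}\{\xi\in S^{d-1}:|P_{V_0}\xi|\le b\}\le C_{d,m}b^m$ for $0<b\le1$, where $b=a/|z|$. The case $b\ge1$ is covered by the bound $1$. We can see this by realizing the uniform measure as the normalized Gaussian $\xi=g/|g|$ with $g$ standard Gaussian in $\R^d$. The event $|P_{V_0}g|\le b|g|$ is contained in the union of $\{|g|\ge \lambda\}$ and $\{|P_{V_0}g|\le b\lambda\}$ for any $\lambda>0$.

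The second event has probability at most $C(b\lambda)^m$, because $P_{V_0}g$ is a standard Gaussian in $\R^m$ with bounded density. The first has probability at most $e^{-c\lambda^2}$. Choosing $\lambda\asymp\sqrt{\log(1/b)}$ leaves a logarithmic loss, so I would avoid that. Instead I would use the explicit density of $|P_{V_0}\xi|^2$, which is a Beta$(m/2,(d-m)/2)$ variable. Its density near $0$ is $\lesssim t^{m/2-1}$, so $\Pr(|P_{V_0}\xi|\le b)\lesssim b^m$ directly. This settles \eqref{eq:grassmann-small-ball}.

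For \eqref{eq:truncated-average}, write $r=|z|$. If $r\le1$, the integrand is at most $1=K_\ell^{s,m}(z)$. For $r>1$, we have $|P_Vz|\le r$, so the integrand is $(1\vee|P_Vz|)^{-s}$ restricted to $|P_Vz|\le\ell$. We split according to whether $|P_Vz|\le1$ or $2^{j-1}<|P_Vz|\le2^j$ for $1\le j\le J$, where $2^J\approx\min\{\ell,r\}$. By \eqref{eq:grassmann-small-ball}, these pieces contribute at most
\[
C\min\{1,r^{-m}\}+C\sum_{j=1}^{J}2^{-(j-1)s}\min\{1,(2^j/r)^m\}.
\]

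Since $s<m$, the sum is dominated by its largest term, which is the one with $2^j$ near $\min\{\ell,r\}$. If $r\le\ell$, this gives $\lesssim r^{-s}$. If $r>\ell$, it gives $\lesssim\ell^{-s}(\ell/r)^m=\ell^{m-s}r^{-m}$. These are exactly the two branches of $K_\ell^{s,m}(z)$, and the constant depends on $d$, $m$ and $s$ through the geometric series, whose convergence uses $m-s>0$.

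The main obstacle is obtaining the sharp power $b^m$ in the small-ball estimate without a logarithmic loss. The Beta density (equivalently, integrating in polar coordinates adapted to the splitting $V_0\oplus V_0^\perp$) handles this cleanly. The rest is routine summation.
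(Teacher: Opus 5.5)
Your proof is correct. For \eqref{eq:grassmann-small-ball} it is essentially the paper's argument. The paper makes the same reduction to $|P_Vz|/|z|\sim|(X_1,\dots,X_m)|$ with $X$ uniform on $S^{d-1}$, and uses only that the law of $(X_1,\dots,X_m)$ has bounded density on $B(0,1/2)$. Your bound $\lesssim t^{m/2-1}$ for the $\mathrm{Beta}(m/2,(d-m)/2)$ density is the same fact in polar form.

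Two small points on this half. When $d-m=1$ the Beta density bound holds only for $t$ bounded away from $1$, so moderate $b$ should be sent to the trivial bound $1\lesssim b^m$. Also, the case $m=d$, which Section~\ref{sec:projection} allows and the paper dismisses as immediate, is not covered by the Beta law, though it is trivial. The Gaussian detour can simply be deleted.

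For \eqref{eq:truncated-average} your route is different. You use \eqref{eq:grassmann-small-ball} as a black box and sum over the dyadic layers $2^{j-1}<|P_Vz|\le 2^j$ up to $\min\{\ell,|z|\}$. The paper instead returns to the bounded density and integrates directly, $\int_{|y|\le1/2}(1\vee|z||y|)^{-s}\,d\sigma(y)\lesssim|z|^{-m}\int_0^{|z|/2}(1\vee u)^{-s}u^{m-1}\,du$. It treats $|z|>2\ell$ separately, since there the indicator forces $|y|\le\ell/|z|<1/2$.

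The two computations are the discrete and continuous versions of the same estimate. Both use $s<m$ precisely so that the sum or integral is dominated by its top end, giving the two branches $|z|^{-s}$ and $\ell^{m-s}|z|^{-m}$ of $K_\ell^{s,m}$. Your version shows that the averaged kernel bound is a formal consequence of the small-ball (transversality) estimate alone, so it would transfer verbatim to any family of projections satisfying \eqref{eq:grassmann-small-ball}. The paper's version is a single integral tied to the explicit distribution and avoids choosing the cutoff index $J$.
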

\begin{proof}
The case $m=d$ is immediate, since $G(d,d)$ consists of the single
subspace $\R^d$. Assume \BlueChange{$m<d$}. By rotational invariance, we may take $z=|z|e_1$. Let
$
V_0=\operatorname{span}\{e_1,\ldots,e_m\}.
$
A random $V\in G(d,m)$ may be represented as $V=UV_0$, where $U$ is distributed according to Haar measure on the orthogonal group. Hence
$
\frac{|P_Vz|}{|z|}
=|P_Ve_1|
=|P_{V_0}(U^{-1}e_1)|.
$
Since $U^{-1}e_1$ is uniformly distributed on $S^{d-1}$, if
$
X=(X_1,\ldots,X_d)
$
is uniform on $S^{d-1}$, then
$\frac{|P_Vz|}{|z|}$
has the same distribution as $
(X_1^2+\cdots+X_m^2)^{1/2}.$
Denote the distribution of $(X_1,\ldots,X_m)$ on the unit ball of $\mathbb R^m$ by $\sigma$.   Its density is
bounded on the ball $B(0,1/2)$.  Hence, for $0<a\le |z|/2$,
$
 \gamma_{d,m}\{V:|P_Vz|\le a\}
 \le C_{d,m}(a/|z|)^m.
$
If $a>|z|/2$, enlarging the constant gives
\eqref{eq:grassmann-small-ball} as well.

We next prove \eqref{eq:truncated-average}.  Put $R_0=|z|$.  When
$1<R_0\le2\ell$, split the above coordinate distribution into the
regions $|y|\le1/2$ and $|y|>1/2$.  On the first region the density is
bounded, and the change of variables $u=R_0|y|$ gives
\[
 \int_{|y|\le1/2}(1\vee R_0|y|)^{-s}\,d\sigma(y)
 \le C R_0^{-m}\int_0^{R_0/2}(1\vee u)^{-s}u^{m-1}\,du
 \le C R_0^{-s},
\]
because $s<m$.  On $|y|>1/2$ the integrand is at most
$C R_0^{-s}$.  Thus
\[
 \int_{G(d,m)}(1\vee|P_Vz|)^{-s}\,d\gamma_{d,m}(V)
 \le C(1\vee|z|)^{-s}.
\]
For $|z|\le2\ell$ this is bounded by a constant multiple of
$K_\ell^{s,m}(z)$ (also when $\ell<|z|\le2\ell$).

Finally, if $|z|>2\ell$, the indicator in $Q_\ell^s$ restricts the
coordinate variable to $|y|\le\ell/|z|<1/2$.  Using the bounded density
again,
\[
 \begin{aligned}
 \int Q_\ell^s(P_Vz)\,d\gamma_{d,m}(V)
 &\le C|z|^{-m}\int_0^\ell(1\vee r)^{-s}r^{m-1}\,dr\\
 &\le C_{d,m,s}\ell^{m-s}|z|^{-m}
 =C_{d,m,s}K_\ell^{s,m}(z).
 \end{aligned}
\]
The case $|z|\le1$ is immediate.
\end{proof}

\subsection{A fractional-moment estimate for the whole tail}
The next lemma treats every source shell and includes $\theta=0$.
It separates projected balls close to the origin from the remaining balls.

\begin{lemma}\label{lem:moment}
Fix $N\ge1$, $0\le\theta\le1$ and $0<t<s<m$, and put $p=t/s$.
For each $n\ge N$, choose finitely many balls $B(x_{n,i},r_{n,i})$
covering $A_n$, with $x_{n,i}\in A_n$ and $r_{n,i}\ge1$. Suppose
\[
 R_n=2^{n-N\theta},\quad
 G_N=\sum_{n\ge N,i}2^{-nt}\psi_{R_n}^{t,m}(r_{n,i})<\infty.
\]
There are a measurable set $\mathcal B_N\subset G(d,m)$ and a
nonnegative measurable function $W_N$ such that
\begin{align}
 \gamma_{d,m}(\mathcal B_N)&\le C2^{-N\theta(m-t)}G_N,
       \label{eq:bad-directions}\\
 \mathcal{H}_N^{s,\theta}(P_VE)&\le W_N(V)
       \quad(V\notin\mathcal B_N),\label{eq:moment-majorant}\\
 \int_{G(d,m)}W_N(V)^p\,d\gamma_{d,m}(V)&\le C_{d,m,s,t}G_N.
       \label{eq:fractional-moment}
\end{align}
The constants do not depend on $N$, $\theta$, or the largest source index.
\end{lemma}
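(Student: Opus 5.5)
The plan is to work ball by ball. For each source ball I separate a small exceptional set of directions in which its projection comes too close to the origin relative to its radius. Off this set I bound the projected covering cost by an explicit function of $\rho=|P_Vx_{n,i}|$, and then integrate its $p$-th power using the distribution of $|P_Vz|/|z|$ described in the proof of Lemma~\ref{lem:grassmann}.

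\emph{Exceptional directions.} I set
\[
 \mathcal B_N=\bigcup_{n\ge N,i}\{V:|P_Vx_{n,i}|\le 4r_{n,i}\}.
\]
Since $|x_{n,i}|\ge 2^{n-1}$, \eqref{eq:grassmann-small-ball} bounds the measure of each piece by $C\min\{1,(r_{n,i}/2^n)^m\}$. It then suffices to check that
\[
 (r/2^n)^m\le 2^{-N\theta(m-t)}2^{-nt}\psi_{R_n}^{t,m}(r).
\]
If $r\le R_n$, write $(r/2^n)^m=(r/2^n)^t(r/2^n)^{m-t}$ and use $r/2^n\le 2^{-N\theta}$. If $r>R_n$, the two sides are equal. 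When $r\gtrsim 2^n$ the right side is $\gtrsim1$, so the $\min\{1,\cdot\}$ causes no trouble. A countable union bound then gives \eqref{eq:bad-directions}.

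\emph{The majorant.} Fix $V\notin\mathcal B_N$. For every ball we have $\rho>4r$, so $P_VB(x,r)\subset B(P_Vx,r)$ lies in $\{\rho/2<|y|<2\rho\}$ and meets at most three target shells $S_j$ of $V$, each with $2^j\asymp\rho$. Only shells with $j\ge N$ count, which forces $\rho\ge 2^{N-2}$. Inside such a shell I cover the piece by balls of radius $\min\{r,R(\rho)\}$, where $R(\rho)\asymp\rho2^{-N\theta}$ is the admissible maximal radius; I take $\max\{1,\cdot\}$ and recentre in the shell, as in the remark before Lemma~\ref{lem:ambient-cover}. Because the piece is $m$-dimensional, this needs $\lesssim (r/R)^m$ balls when $r>R$. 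The normalized cost is then $\lesssim w(\rho)$, where
\[
 w(\rho)=C\rho^{-s}\psi^{s,m}_{\rho2^{-N\theta}}(r)\ind_{\{\rho\ge 2^{N-2},\ \rho>4r\}},
\]
and I define $W_N(V)=\sum_{n\ge N,i}w(|P_Vx_{n,i}|)$. Since the projected balls cover $P_VE\cap\{|y|\ge 2^{N-1}\}$, this gives \eqref{eq:moment-majorant}. Measurability is clear, since each term depends continuously on $V$ except across level sets.

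\emph{Fractional moment.} Since $p<1$, we have $(\sum a_k)^p\le\sum a_k^p$, so it suffices to prove, for each single ball,
\[
 \int w(|P_Vx|)^p\,d\gamma\lesssim 2^{-nt}\psi^{t,m}_{R_n}(r).
\]
The law of $\rho$ has density $\lesssim \rho^{m-1}2^{-nm}$ on $\rho\le|x|<2^n$; this is the bounded-density fact from Lemma~\ref{lem:grassmann}, since only $\rho/|x|<1/2$ matters up to constants. I split according to whether $r\le\rho2^{-N\theta}$.

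In the first range, $w^p=(r/\rho)^{sp}=(r/\rho)^t$. Integrating against $\rho^{m-1}$ up to $2^n$ gives $\lesssim r^t2^{-nt}$, because $t<m$.

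In the second range, $4r<\rho<\min\{r2^{N\theta},2^n\}$ and
\[
 w^p\asymp r^{mp}\rho^{-mp}2^{N\theta p(m-s)}.
\]
Since $mp<m$, the $\rho$-integral is dominated by the upper endpoint. If $r\le R_n$, the result is $r^m2^{N\theta(m-t)}2^{-nm}$, which is at most $r^t2^{-nt}$ because it equals $r^t2^{-nt}(r2^{N\theta}/2^n)^{m-t}$. If $r>R_n$, the result equals the target quantity $(r/2^n)^m2^{N\theta(m-t)}$ times $(R_n/r)^{m(1-p)}\le1$. Here the identity $p(m-s)=pm-t$ is what makes the exponents line up. Summing over all balls yields \eqref{eq:fractional-moment}. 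No constant in this computation depends on $N$, $\theta$, or the source index $n$.

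I expect the main obstacle to be this last case analysis: the $\rho$-integral must land exactly on $\psi^{t}_{R_n}$ uniformly in $n$, with no $\log$ or $N$-dependent loss. This is exactly why the fractional power $p=t/s<1$ is used in place of a first moment: with $p=1$, the case $r>\rho2^{-N\theta}$ would produce a divergent or $N$-dependent factor. A secondary point needing care is the bookkeeping when $\rho\asymp r$ or $r\gtrsim2^n$; these configurations are all absorbed into $\mathcal B_N$.
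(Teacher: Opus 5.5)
Your proposal is correct and is essentially the paper's argument: the same ball-by-ball exceptional set $\{V:|P_Vx|\lesssim r\}$, the same target covers by balls of radius $\min\{r,u_j\}$ with $u_j=2^{j-N\theta}$ (using $\lesssim(r/u_j)^m$ of them when $r>u_j$), subadditivity of $a\mapsto a^p$ for $p=t/s$, and the same two-regime computation; the only difference is cosmetic, in that you integrate in $\rho=|P_Vx|$ against its density where the paper sums over dyadic target shells using the hitting bound $\gamma_{d,m}\{V:B(P_Vx,r)\cap S_j^V\ne\varnothing\}\le C2^{-m(n-j)}$. One point in your write-up should be made explicit: your remark that only $\rho/|x|<1/2$ matters needs an argument, because when $d-m=1$ the density of $\rho$ is unbounded near $\rho=|x|$; it suffices that $w$ is nonincreasing in $\rho$ on its support, so the region $\rho\ge|x|/2$ contributes at most $w(|x|/2)^p$, which your endpoint computation already bounds by $C2^{-nt}\psi^{t,m}_{R_n}(r)$.
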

\begin{proof}
Let $\mathcal B_N$ be the union, over all selected source balls, of the
events
$
 \{V:|P_Vx_{n,i}|\le2r_{n,i}\}.
$
Fix one such ball and write $x=x_{n,i}$, $r=r_{n,i}$ and
$\xi=r/2^n$.  Since $x\in S_n$, \eqref{eq:grassmann-small-ball} gives
$
 \gamma_{d,m}\{V:|P_Vx|\le2r\}
 \le C_{d,m}\min\{1,\xi^m\}.
$
Moreover,
\begin{equation}
 \xi^m\le
 2^{-N\theta(m-t)}2^{-nt}\psi_{R_n}^{t,m}(r).
 \label{eq:moment-bad-ball}
\end{equation}
Indeed, if $r\le R_n$, then
$\xi\le2^{-N\theta}$ and the right-hand side equals
$2^{-N\theta(m-t)}\xi^t$; if $r>R_n$, the two sides are equal.
Summing \eqref{eq:moment-bad-ball} over the source balls proves
\eqref{eq:bad-directions}.

We now estimate, for each target shell \(S_j^V\), the contribution of the projected ball \(B(P_Vx,r)\) to an admissible covering of \(P_VE\cap S_j^V\).
Fix a target shell $N\le j\le n$ and put
$u_j=2^{j-N\theta}$.  If $B(P_Vx,r)$ meets $S_j^V$ and $r\le2^j$, then
$|P_Vx|\le2^j+r\le2^{j+1}$; hence
\begin{equation}
 \gamma_{d,m}\{V:B(P_Vx,r)\cap S_j^V\ne\varnothing\}
 \le C_{d,m}2^{-m(n-j)}.
 \label{eq:moment-hit-shell}
\end{equation}
On the retained event $|P_Vx|>2r$, no intersection is possible when
$r>2^j$.

For $j\ge1$, let
\[
 I_j(V)=\ind_{\{|P_Vx|>2r\}}
        \ind_{\{|P_Vx|<2^j+r\}}
        \ind_{\{|P_Vx|+r>2^{j-1}\}}.
\]
This is the indicator that the projected open ball meets $S_j^V$ and the centre is retained. It is Borel because $V\mapsto|P_Vx|$ is continuous. We set $w_j=0$ when $r>2^j$; in that case $I_j=0$.  If $r\le u_j$, a maximal $r$-separated subset
of $B(P_Vx,r)\cap S_j^V$ has bounded cardinality, and the corresponding
radius-$r$ balls, with centres in $S_j^V$, cover the intersection.  We
may therefore take
$
 w_j(V)=C_{m,s}(r/2^j)^s I_j(V).
$
Using \eqref{eq:moment-hit-shell} and $p=t/s$,
\[
 \int w_j^p\,d\gamma_{d,m}
 \le C\xi^t2^{-(m-t)(n-j)}.
\]
The sum over $j$ is $O(\xi^t)$.  Notice that this case can occur only
when $r\le R_n$, because $u_j\le u_n=R_n$.

Suppose next that $u_j<r\le2^j$.  A maximal $u_j$-separated subset of
the same intersection gives at most $C_m(r/u_j)^m$ radius-$u_j$ balls.
Thus these balls contribute at most
$
C_{m,s}\left(r/2^j\right)^m 2^{N\theta(m-s)}
$
to the covering sum defining $\mathcal H_{j,N}^{s,\theta}(P_VE)$
and consequently
\begin{equation}
 \int w_j^p\,d\gamma_{d,m}
 \le C\xi^{mp}2^{N\theta(mp-t)}
          2^{-m(1-p)(n-j)}.
 \label{eq:moment-second-case}
\end{equation}
Here $u_j<r$ implies $n-j>h$, where
$h=\log_2(R_n/r)$. If $r\le R_n$, then $h\ge0$, and the geometric sum is bounded by
$
 \sum_{n-j>h}2^{-m(1-p)(n-j)}
 \le C_{m,p}2^{-m(1-p)h}.
$
Substitution into \eqref{eq:moment-second-case} gives
$
 C\xi^{mp}2^{N\theta(mp-t)}2^{-m(1-p)h}
 =C\xi^m2^{N\theta(m-t)}\le C\xi^t,
$
since $\xi\le2^{-N\theta}$.  If $r>R_n$, we may sum over all
$N\le j\le n$ and obtain
$
 C\xi^{mp}2^{N\theta(mp-t)}
 \le C\xi^m2^{N\theta(m-t)},
$
since $\xi2^{N\theta}>1$ and $p-1<0$.  Combining the two cases yields
\begin{equation}
 \sum_{j=N}^n\int w_j^p\,d\gamma_{d,m}
 \le C_{d,m,s,t}2^{-nt}\psi_{R_n}^{t,m}(r).
 \label{eq:moment-one-ball}
\end{equation}

Define $W_N$ by summing these $w_j$ over all source balls and all
relevant target shells.  Since $0<p<1$,
$(\sum a_i)^p\le\sum a_i^p$ for nonnegative $a_i$.  Tonelli's theorem
and \eqref{eq:moment-one-ball} therefore give
\eqref{eq:fractional-moment}.

It remains to check that $W_N$ really majorizes the target content.
Take $V\notin\mathcal B_N$ and a point $y\in E$ whose projection lies
in $S_j^V$ with $j\ge N$.  If $y\in S_n$, then
$|P_Vy|\le|y|<2^n$, so necessarily $n\ge j$.  Choose a source ball
covering $y$.  Its centre satisfies the retained condition, its
projected ball meets $S_j^V$, and the construction above supplies a
legal target cover with radius at most $u_j$.  Hence all target shells
$j\ge N$ are covered and
$\mathcal H_N^{s,\theta}(P_VE)\le W_N(V)$.  If $W_N(V)<\infty$, each
individual shell cover is finite because every legal ball has radius at
least one and hence normalized cost at least $2^{-js}$.  This proves
\eqref{eq:moment-majorant} and completes the proof.
\end{proof}

The removal of $\mathcal B_N$ avoids an uncontrolled shell-count factor, since a large projected ball containing the origin may intersect many target shells and contribute significantly to each of them.

\subsection{The upper estimate for $0<\theta\le 1$}
Fix $\theta>0$ and one sign.  If
$\Dpm^{\pm,m}_\theta(E)=m$, the ambient bound
$\Dt^\pm_\theta(P_VE)\le m$ gives the result.  Otherwise choose
\[
 \Dpm^{\pm,m}_\theta(E)<u<t<s<m,
 \quad 0<\varepsilon<\theta(t-u).
\]
By the exponent comparison preceding Proposition~\ref{prop:profile-shape},
all exponents strictly above the profile are admissible.  Hence, for the
lower sign, there is a strictly increasing sequence $\mathcal S$ such
that
$\Ct_N^{u,\theta,m}(E)\le2^{\varepsilon N}$ for 
$N\in\mathcal S,$
whereas for the upper sign the same estimate holds for every sufficiently
large $N$.

For each selected $N$, Lemma~\ref{lem:mixed} supplies, shell by shell,
finite mixed covers chosen independently of $V$.  Empty shells are omitted. For every nonempty shell the mixed content is positive, and we choose a cover with cost at most twice its infimum.  Summing
\eqref{eq:mixed-outer} over $n\ge N$ gives
\[
 G_N
 \le C_{u,t,m}(N+1)2^{-N\theta(t-u)}
       \Ct_N^{u,\theta,m}(E)\le C_{u,t,m}(N+1)
       2^{-N[\theta(t-u)-\varepsilon]}.
\]
Since $\theta(t-u)-\varepsilon>0$, after decreasing the exponent slightly
we obtain $G_N\le2^{-\delta N}$ for some $\delta>0$ and all sufficiently
large selected scales.

Apply Lemma~\ref{lem:moment}.  Because
$2^{-N\theta(m-t)}\le1$, Markov's inequality and
\eqref{eq:fractional-moment} imply
\[
 \gamma_{d,m}(\mathcal B_N)
 +\gamma_{d,m}\{V:W_N(V)>1\}
 \le C2^{-\delta N}.
\]
The series of these exceptional probabilities is finite, also when the
sum is restricted to $N\in\mathcal S$.  By Borel--Cantelli, for almost
every $V$ and all sufficiently large selected $N$,
$
 \mathcal H_N^{s,\theta}(P_VE)\le W_N(V)\le1.
$
For the lower sign this gives a subsequence along which the normalized
logarithmic growth at exponent $s$ is nonpositive; for the upper sign it
gives the same conclusion at every sufficiently large scale.  Thus
$\Dt^\pm_\theta(P_VE)\le s$.  Finally choose a countable sequence of
$s$ decreasing to $\Dpm^{\pm,m}_\theta(E)$ and intersect the resulting
full-measure sets.  We obtain
\begin{equation}
 \Dt^\pm_\theta(P_VE)\le\Dpm^{\pm,m}_\theta(E)
 \quad\text{for almost every }V\in G(d,m).
 \label{eq:projection-upper}
\end{equation}

\subsection{The lower estimate by capacity weights}
Fix $0\le\theta\le1$ and
$0<s<\Dpm^{\pm,m}_\theta(E)$.  For the lower profile, the defining
liminf at exponent $s$ is strictly positive; for the upper profile, the
defining limsup is strictly positive.  Hence there is $b>0$ such that
$
 \Ct_N^{s,\theta,m}(E)\ge2^{2bN}
$
for every sufficiently large $N$ in the lower-sign case, and along an
increasing sequence of $N$ in the upper-sign case.  When $\theta=0$,
Proposition~\ref{prop:recovery} shows that the relevant capacity series
diverges, so every tail is infinite and the same conclusion is available.

At each selected scale choose finitely many nonempty source shells
$I_N\subset\{n:n\ge N\}$ such that
\[
 B_N:=\sum_{n\in I_N}b_{N,n}\ge2^{bN},\quad
 b_{N,n}=2^{-ns}C_{R_{N,n}}^s(A_n),\quad
 R_{N,n}=2^{n-N\theta}.
\]
This is possible because a finite partial sum of a nonnegative series can
be made larger than $2^{bN}$.  No bound on $\#I_N$ or on its largest
index is used.  For each $n\in I_N$, choose
$\mu_{N,n}\in\Pf(A_n)$ whose $K_{R_{N,n}}^s$-energy is at most
twice the minimum.  Put
$L_N=\lceil4\log_2(N+1)\rceil$ and $ M_N=N-L_N,
$
and restrict to large $N$ for which $M_N\ge1$.

We first discard the directions for which too much of a source shell is
projected close to the origin.  For $x\in S_n$,
\eqref{eq:grassmann-small-ball} gives
$
 \gamma_{d,m}\{V:|P_Vx|<2^{n-L_N}\}
 \le C_{d,m}2^{-mL_N}.
$
Let
$
 \ell_{N,n}(V)=
 \mu_{N,n}\{x:|P_Vx|<2^{n-L_N}\}.
$
Tonelli's theorem yields
$\int\ell_{N,n}(V)\,d\gamma_{d,m}(V)\le C2^{-mL_N}$.
By Markov's inequality,
$
 \gamma_{d,m}\{V:\ell_{N,n}(V)>1/2\}
 \le C_{d,m}(N+1)^{-4}.
$
Thus, outside this exceptional set, at least half of the projected mass
lies in the radial region $|y|\ge2^{n-L_N}$.

Next put $ U_{N,n}=2^{n+1-M_N\theta}$
and let
\[
 J_{N,n}(V)=
 \iint Q_{U_{N,n}}^s(P_V(x-y))\,
       d\mu_{N,n}(x)d\mu_{N,n}(y).
\]
Since
$U_{N,n}/R_{N,n}=2^{1+\theta L_N}$, Lemma~\ref{lem:grassmann} and the
first kernel comparison in \eqref{eq:kernel-comparisons} give
\[
 \begin{aligned}
 \int J_{N,n}(V)\,d\gamma_{d,m}(V)
 &\le C_{d,m,s}
 \iint K_{U_{N,n}}^s(x-y)\,d\mu_{N,n}(x)d\mu_{N,n}(y)\\
 &\le C_{d,m,s}\left(\frac{U_{N,n}}{R_{N,n}}\right)^{m-s}
       \frac{1}{C_{R_{N,n}}^s(A_n)}\\
 &\le \frac{C_{d,m,s}(N+1)^{4m}}
 {C_{R_{N,n}}^s(A_n)}.
 \end{aligned}
\]
A second use of Markov's inequality shows that
\begin{equation}
 J_{N,n}(V)\le
 \frac{C_{d,m,s}(N+1)^{4m+4}}{C_{R_{N,n}}^s(A_n)}
 \label{eq:good-energy}
\end{equation}
fails on a set of measure $O_{d,m,s}((N+1)^{-4})$.

Call $n\in I_N$ \emph{good for $V$} if both the mass-retention estimate
and \eqref{eq:good-energy} hold.  Instead of taking a union over all
$n\in I_N$, which would introduce the uncontrolled factor $\#I_N$, we
weight bad shells by their capacity contributions:
\[
 Z_N(V)=\frac1{B_N}
 \sum_{\substack{n\in I_N\\n\text{ bad for }V}}b_{N,n}.
\]
The two preceding exceptional-set bounds are uniform in $n$, so
\[
 \int Z_N\,d\gamma_{d,m}\le C_{d,m,s}(N+1)^{-4}.
\]
Another Markov estimate gives
\[
 \gamma_{d,m}\{V:Z_N(V)>1/2\}
 \le2C_{d,m,s}(N+1)^{-4}.
\]
The sum over the selected outer scales is finite.  Borel--Cantelli
therefore implies that, for almost every $V$,
\begin{equation}
 \sum_{\substack{n\in I_N\\n\text{ good for }V}}b_{N,n}
 \ge \frac{B_N}{2}
 \label{eq:good-capacity-weight}
\end{equation}
for all sufficiently large selected $N$.

Fix such a direction $V$ and a good shell $n$.  Restrict the projected
measure to the region $|y|\ge2^{n-L_N}$ and denote the restriction by
$\nu$.  Then $\nu(V)\ge1/2$ and, because projection does not increase
norm, its support is contained in the target shells
$n-L_N+1,\ldots,n$.  Take arbitrary legal covers of these shells at
outer scale $M_N$.  Partition the support of $\nu$ into measurable pieces
$A_i$, each assigned to one covering ball $B(y_i,r_i)$.  Since
$j\le n$ for every relevant shell,
$
 \operatorname{diam}A_i\le2r_i
 \le2^{n+1-M_N\theta}=U_{N,n}.
$
For $x,y\in A_i$ we therefore have
$Q_{U_{N,n}}^s(x-y)\ge(2r_i)^{-s}$.  Positivity of the kernel gives
$
 J_{N,n}(V)\ge2^{-s}\sum_i r_i^{-s}\nu(A_i)^2.
$
Since $\sum_i\nu(A_i)=\nu(V)\ge1/2$, Cauchy--Schwarz yields
\[
 \frac14\le
 \left(\sum_i r_i^s\right)
 \left(\sum_i r_i^{-s}\nu(A_i)^2\right).
\]
Combining the last two inequalities, and using $j\le n$ when the costs
are normalized, gives
\[
 \sum_{j=n-L_N+1}^n
 \mathcal H_{j,M_N}^{s,\theta}(P_VE)
 \ge\frac{c_s2^{-ns}}{J_{N,n}(V)}
 \ge c_{d,m,s}(N+1)^{-4m-4}b_{N,n}.
\]

Now sum over the good source shells.  A fixed target shell $j$ can occur
in at most $L_N$ of the intervals
$[n-L_N+1,n]$.  Using \eqref{eq:good-capacity-weight},
\[
 \mathcal H_{M_N}^{s,\theta}(P_VE)
 \ge \frac{c_{d,m,s}}{L_N(N+1)^{4m+4}}
       \sum_{\substack{n\in I_N\\n\text{ good}}}b_{N,n}\ge \frac{c_{d,m,s}2^{bN}}
 {L_N(N+1)^{4m+4}}.
\]
For all sufficiently large $N$, the increment of
$L_N=\lceil4\log_2(N+1)\rceil$ is either zero or one.  Hence
$M_N=N-L_N$ is nondecreasing with increments zero or one, tends to
infinity, and visits every sufficiently large integer.  This proves a
positive liminf growth rate in the lower-sign case.  In the upper-sign
case, $M_N\to\infty$ along the selected sequence; after passing to a
subsequence we may assume that the $M_N$ are strictly increasing, which
gives a positive limsup growth rate.  Since $M_N/N\to1$, in both cases the corresponding target dimension is at least $s$. A strict inequality at this one test exponent is neither asserted nor needed.

Letting $s$ increase to the profile through a countable sequence gives
\begin{equation}
 \Dt^\pm_\theta(P_VE)\ge\Dpm^{\pm,m}_\theta(E)
 \quad\text{for almost every }V\in G(d,m).
 \label{eq:projection-lower}
\end{equation}
This argument includes $\theta=0$.

\subsection{The upper estimate at $\theta=0$}
It remains to prove the upper estimate at $\theta=0$.  If
$\hDim E\ge m$, the ambient bound $\hDim(P_VE)\le m$ is enough.
Assume $\hDim E<m$ and choose
$
 \hDim E<u<t<s<m.
$
By Proposition~\ref{prop:recovery}, the series defining
$\Ct_1^{u,0,m}(E)$ converges, so its tails
$\Ct_N^{u,0,m}(E)$ tend to zero.  Here $R=2^n$, and for
$A_n\subset S_n$ the ratio $(2+\operatorname{diam}A_n)/R$ is uniformly
bounded.  Hence the logarithmic factor in Lemma~\ref{lem:mixed} is
bounded independently of $n$ and $N$.  Choosing near-minimizing mixed
covers gives
$
 G_N\le C_{u,t,m}\Ct_N^{u,0,m}(E)\longrightarrow0.
$
Select $N_k\to\infty$ so that $G_{N_k}\le2^{-2k}$.  Lemma~\ref{lem:moment}
with $\theta=0$, followed by Markov's inequality, gives
\[
 \sum_k\Bigl(
 \gamma_{d,m}(\mathcal B_{N_k})+
 \gamma_{d,m}\{V:W_{N_k}(V)>1\}\Bigr)<\infty.
\]
Thus Borel--Cantelli implies that, for almost every $V$,
$
 \mathcal H_{N_k}^{s,0}(P_VE)\le1
$
for all sufficiently large $k$.  At $\theta=0$ this quantity is exactly
a tail of the macroscopic covering series.  Hence one tail is finite;
adding the finitely many earlier shell contents shows that the whole
series converges.  Therefore $\hDim(P_VE)\le s$.

Let $s\downarrow\hDim E$ through a countable sequence and combine this
upper bound with \eqref{eq:projection-lower} and \eqref{eq:cap-zero} to
obtain
\begin{equation}
 \hDim(P_VE)=\min\{\hDim E,m\}
 \quad\text{for almost every }V\in G(d,m).
 \label{eq:DS-projection}
\end{equation}
For $(d,m)=(2,1)$ this recovers the theorem of Daw and Seuret
\cite[Theorem 2.8]{DS}. Its high-dimensional form here follows from the
preceding proof, not from an unquoted extension of that theorem.

\subsection{One exceptional set for every parameter}
For each rational $\theta\in(0,1]$ and each sign, intersect the
full-measure sets furnished by \eqref{eq:projection-upper} and
\eqref{eq:projection-lower}.  This is a countable intersection and
therefore still has full measure.  On it the projection identities hold
for every positive rational parameter.

Now fix an arbitrary $\theta\in(0,1]$ and choose rational
$\theta_k\to\theta$.  By Proposition~\ref{prop:dimension-continuity},
the two target spectra are continuous at $\theta$, and by
Proposition~\ref{prop:profile-shape} the two profiles are continuous there
as well.  Passing to the limit in the rational identities yields the
same identities at $\theta$.  Finally intersect with the full-measure
set in \eqref{eq:DS-projection} to include $\theta=0$.  The resulting
single exceptional set works simultaneously for every parameter and both
dimensions, proving Theorem~\ref{thm:projection}.

\section{Exact cases and applications}\label{sec:applications}
This section is devoted to criteria for computing the typical
projected dimensions and to applications of the projection theorem
to discrete Moran sets, Lipschitz graphs, and arithmetic sumsets.
For
fixed $m\in \{1,\ldots,d\}$ and $E\subset \R^d$, we write
$g_E^-(\theta)=\lD_\theta^m(E)$ and
$g_E^+(\theta)=\uD_\theta^m(E)$.

\subsection{Macroscopic regularity and a spectrum-only criterion}
\begin{proposition}\label{prop:equality}
Let $E\subset\R^d$ be Borel and fix $m\in \{1,\ldots, d\}$.
\begin{enumerate}
\item If $\hDim E=\Dt^+_{\theta_0}(E)=d_0$, then
$g_E^-(\theta)=g_E^+(\theta)=\min\{d_0,m\}$ for
$0\le\theta\le\theta_0$.
\item If, for some $a\in(0,1)$,
$\Dt^-_\theta(E)=\Dt^+_\theta(E)=d(1-a/\theta)_+$ at all positive
parameters, then $g_E^-(\theta)=g_E^+(\theta)=m(1-a/\theta)_+$.
The zero endpoint is also zero.
\end{enumerate}
\end{proposition}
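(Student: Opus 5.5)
For part (1) I would squeeze both profiles between two copies of $\min\{d_0,m\}$, using only Proposition~\ref{prop:recovery} and the monotonicity results. Fix $0\le\theta\le\theta_0$. From below, Proposition~\ref{prop:profile-shape} gives that $\theta\mapsto g_E^-(\theta)$ is nondecreasing. Combined with \eqref{eq:cap-zero}, this yields
\[
 g_E^-(\theta)\ge g_E^-(0)=\min\{\hDim E,m\}=\min\{d_0,m\}.
\]
From above, $g_E^-(\theta)\le g_E^+(\theta)$ holds by definition. The upper-profile bound in Proposition~\ref{prop:recovery} gives $g_E^+(\theta)\le\min\{\Dt^+_\theta(E),m\}$. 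Proposition~\ref{prop:dimension-continuity} says $\Dt^+$ is nondecreasing in $\theta$, so $\Dt^+_\theta(E)\le\Dt^+_{\theta_0}(E)=d_0$. The chain therefore closes and part (1) follows.

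For part (2), write $f(\theta)=(1-a/\theta)_+$. The lower bound is the ambient comparison in Proposition~\ref{prop:recovery}:
\[
 g_E^\pm(\theta)\ge\tfrac md\Dt^\pm_\theta(E)=m f(\theta)\qquad(\theta>0).
\]
The upper bound $\min\{\Dt^\pm_\theta(E),m\}=\min\{d f(\theta),m\}$ from the same proposition is too weak when $\theta>a$. It is, however, sharp where $f$ vanishes. For $0<\theta\le a$ we have $\Dt^+_\theta(E)=0$, hence $g_E^+(\theta)=0$, and in particular $g_E^+(a)=0$.

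To push beyond $a$, I would invoke the continuity estimate of Proposition~\ref{prop:profile-shape} with base parameter $a$ and $\varphi>a$:
\[
 g_E^\pm(\varphi)\le g_E^\pm(a)+\Bigl(1-\frac a\varphi\Bigr)\bigl(m-g_E^\pm(a)\bigr)=m\Bigl(1-\frac a\varphi\Bigr).
\]
This matches the lower bound, so $g_E^-=g_E^+=mf$ at every positive parameter. Using the base point exactly at $a$, where the profile is already known to be $0$, is the key step. It is the only place where the linear-in-$(1-\theta/\varphi)$ modulus is needed, and this is the step I expect to be the main obstacle.

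For the zero endpoint, Proposition~\ref{prop:basic-endpoints} gives $\hDim E\le\Dt^-_\theta(E)=0$ for any $0<\theta\le a$. Hence $\hDim E=0$, and \eqref{eq:cap-zero} gives $g_E^\pm(0)=0$.
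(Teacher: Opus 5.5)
Your proposal is correct and follows the paper's proof essentially step for step. In (1) you squeeze both profiles between $\min\{\hDim E,m\}$ (monotonicity plus \eqref{eq:cap-zero}) and $\min\{\Dt^+_\theta(E),m\}\le\min\{d_0,m\}$. In (2) you use \eqref{eq:source-upper} for vanishing on $(0,a]$, the continuity estimate of Proposition~\ref{prop:profile-shape} based at $a$ for the upper bound, and the $m/d$ bound of Proposition~\ref{prop:recovery} for the lower bound, exactly as the paper does. The implicit fact $g_E^-(a)=0$ follows from $0\le g_E^-\le g_E^+$, and your derivation of $\hDim E=0$ makes explicit what the paper leaves to \eqref{eq:cap-zero} at the zero endpoint.
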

\begin{proof}
In the first case, for $0\le\theta\le\theta_0$, monotonicity and
Proposition~\ref{prop:basic-endpoints} give
\[
 d_0=\hDim E\le\Dt^-_\theta(E)\le\Dt^+_\theta(E)
 \le\Dt^+_{\theta_0}(E)=d_0.
\]
Thus both source dimensions equal $d_0$.  The upper and lower profile
bounds in Proposition~\ref{prop:recovery} then squeeze both profiles to
$\min\{d_0,m\}$.

In the second case the two source dimensions vanish for
$0<\theta\le a$.  Hence \eqref{eq:source-upper} gives
$g_E^-(\theta)=g_E^+(\theta)=0$ there.  Since both profiles vanish at
$a$, Proposition~\ref{prop:profile-shape} gives, for $\theta>a$,
$
 g_E^\pm(\theta)\le m\left(1-\frac a\theta\right).
$
The $m/d$ lower bound in Proposition~\ref{prop:recovery} gives the
reverse inequality.  At $\theta=0$ use \eqref{eq:cap-zero}.
\end{proof}

Here is a useful sufficient condition for the first criterion with
$\theta_0=1$. Suppose that for some $0<d_0\le d$,
\[
 \limsup_n\frac{\log\max\{1,M_n(E)\}}{n\log2}\le d_0,
\]
and there is a Radon measure $\mu$ concentrated on $E$ such that
\begin{equation}
 \mu(B(x,r))\le Cr^{d_0}\quad(x\in E,r\ge1),\qquad
 \mu(S_n)\ge c2^{d_0n}\quad\text{for infinitely many }n.
 \label{eq:regular-measure}
\end{equation}
The covering condition gives $\Dt^+_1(E)\le d_0$.
Recenter a covering ball at a point of $E$ to use the mass bound.
Every macroscopic cover of one of the heavy shells has normalized
$d_0$-cost at least a fixed positive constant. For smaller exponents
its cost is no lower, so the macroscopic series diverges.
Thus $\hDim E=d_0$, and all typical values equal $\min\{d_0,m\}$.

In particular this applies to coarsely $d_0$-Ahlfors regular sets:
there is a Radon measure $\mu$ concentrated on $E$ such that
$cr^{d_0}\le\mu(B(x,r))\le Cr^{d_0}
$ for any $x\in E$ and $ r\ge1$.
A maximal $3$-separated subset of $E\cap S_n$ gives disjoint unit balls,
each of mass at least $c$. The upper mass bound therefore gives
$O(2^{d_0n})$ points. Radius-$3$ balls about these points cover the shell
portion, and bounded subdivision gives $M_n(E)=O(2^{d_0n})$.
Fixing a point of $E$ and comparing centres also gives
$c_0R^{d_0}\le\mu(B(0,R))\le C_0R^{d_0}$ for all large $R$.
Choose an integer $L$ with $c_0 2^{Ld_0}>2C_0$. Then
\[
 \sum_{h=n+1}^{n+L}\mu(S_h)
 \ge (c_0 2^{Ld_0}-C_0)2^{nd_0}
 \ge \tfrac12c_0 2^{(n+L)d_0}.
\]
At least one of these $L$ shells satisfies
$\mu(S_h)\ge(c_0/(2L))2^{hd_0}$, yielding infinitely many shells in
\eqref{eq:regular-measure}. No control below the unit scale is needed.

\subsection{High-dimensional discrete digit Moran sets}\label{sec:moran}
We now apply the first criterion to a vector-digit version of discrete
Moran constructions. The fixed-base, variable-base, and discrete gasket
settings were studied in \cite{YDX,LCX,DYX}. The definition below is the
bounded-base higher-dimensional digit model used here; it does not assert
that all discrete Moran constructions in those papers have this form.

\begin{definition}\label{def:moran}
Fix $d\ge2$, integers $2\le b_j\le b^*<\infty$, and digit sets
$\mathcal A_j\subset\{0,\ldots,b_j-1\}^d$ satisfying
\emph{$n_j:=\#\mathcal A_j\ge2$ at every level}. Put
\[
 B_0=Q_0=1,\qquad B_k=\prod_{j=1}^k b_j,\qquad
 Q_k=\prod_{j=1}^k n_j
 \]
 and $ E=\bigcup_{k\ge1}E_k\subset\Z^d,$ where
 \[
 E_k=\left\{\sum_{j=1}^k B_{j-1}a_j:
                        a_j\in\mathcal A_j\right\}.
\]
We do not require $0\in\mathcal A_j$, so the levels need not be nested.
\end{definition}

\begin{proposition}\label{prop:moran-endpoints}
For the set in Definition~\ref{def:moran},
\begin{equation}
 \hDim E=\UMdim E=s, \quad
\text{where $s=\limsup_{k\to\infty}
       \frac{\sum_{j=1}^k\log n_j}{\sum_{j=1}^k\log b_j}.$}
 \label{eq:moran-dimension}
\end{equation}
\end{proposition}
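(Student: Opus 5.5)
The plan is to prove the chain
\[
 s\le\hDim E\le\Dt^+_1(E)=\UMdim E\le s .
\]
The middle equality is Proposition~\ref{prop:mass-endpoint}, since $E\subset\Z^d$ is uniformly discrete. The inequality $\hDim E\le\Dt^+_1(E)$ is Proposition~\ref{prop:basic-endpoints}. So two estimates remain: the counting bound $\UMdim E\le s$ and the lower bound $\hDim E\ge s$.

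\emph{Upper bound.} The first step is to show that $E\cap[0,B_k)^d$ consists only of points of the form $\sum_{i\le k}B_{i-1}a_i$ with a genuine digit prefix.
\begin{itemize}
\item Take $x\in E_j$ with $j>k$. Its tail $\sum_{i>k}B_{i-1}a_i$ is a vector of nonnegative multiples of $B_k$, so it must vanish if $x\in[0,B_k)^d$.
\item Hence such an $x$ coincides with an element of $E_k$.
\item Points of $E_j$ with $j\le k$ contribute at most $Q_j$ each.
\end{itemize}
Since $E\subset[0,\infty)^d$ and $n_j\ge2$, this gives
\[
 \#\bigl(E\cap B(0,B_k)\bigr)\le\sum_{j\le k}Q_j\le2Q_k .
\]
For $B_{k-1}\le R<B_k$ we have $\log R\ge\log B_k-\log b^*$, and $\log B_k\ge k\log2\to\infty$. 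Therefore the bounded loss disappears in the ratio, and $\UMdim E\le\limsup_k\log Q_k/\log B_k=s$.

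\emph{Lower bound: Frostman estimate.} Fix $0<t<s$; the goal is to show that $\sum_n\mathcal H_n^t(E)$ diverges. Since $Q_kB_k^{-t}$ is unbounded, there are infinitely many record indices $k$ with
\[
 Q_iB_i^{-t}\le Q_kB_k^{-t}\quad\text{for all } i\le k .
\]
Distinct digit strings give distinct points (base-$B$ expansions are unique coordinatewise), so $\#E_k=Q_k$. For a record $k$, the key claim is that every ball of radius $r\ge1$ contains at most $C_{d,b^*}(r/B_k)^tQ_k$ points of $E_k$. To see it:
\begin{itemize}
\item If $B_{i-1}\le r<B_i$ with $i\le k$, the ball meets at most $C_d b^{*d}$ grid cubes of side $B_{i-1}$.
\item Each such cube contains at most $Q_k/Q_{i-1}$ points of $E_k$, because the first $i-1$ digits are determined.
\item The record property gives $Q_k/Q_{i-1}\le(B_k/B_{i-1})^t\le(b^*B_k/r)^t$.
\item If $r\ge B_k$, the bound is trivial.
\end{itemize}

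\emph{Lower bound: shell contributions.} Fix $L$ large. By the Frostman bound, the points of $E_k$ in $B(0,2^{-L}B_k)$ number at most $C2^{-Lt}Q_k\le Q_k/2$. So at least $Q_k/2$ points lie in shells $S_n$ with
\[
 \log_2B_k-L\le n\le\log_2B_k+C_d .
\]
Consider any family of macroscopic covers of these shells, recentred at points of $E$ (which costs only a constant factor). Each ball $B(x_i,r_i)$ captures at most $C(r_i/B_k)^tQ_k$ points. Since $2^n\asymp_{L}B_k$ in this window, the sum of the normalized costs $(r_i/2^n)^t$ over the window is at least a constant $c_{L}>0$. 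Now pass to a sparse subsequence of record indices whose windows of $O(L)$ shells are pairwise disjoint. Each window contributes at least $c_L$ to $\sum_n\mathcal H_n^t(E)$, so the series diverges. By \eqref{eq:macro} this gives $\hDim E\ge t$, and letting $t\uparrow s$ completes the chain.

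The main obstacle is the lower bound. There are two delicate points. First, the Frostman-type estimate needs the record-index choice, because the digit growth need not be regular; a uniform bound $Q_i\gtrsim B_i^t$ over all $i$ is not available. Second, the levels are not nested and a digit $0$ is allowed, so a large part of $E_k$ could a priori sit near the origin, outside the top shells. The plan is to use the Frostman bound itself with the fixed window parameter $L$ to rule this out.
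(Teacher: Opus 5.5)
Your architecture matches the paper's proof. The upper bound uses $E\cap[0,B_k)^d=\bigcup_{j\le k}E_j$ together with Proposition~\ref{prop:mass-endpoint}. The lower bound uses record indices, the uniform measure on $E_k$, a grid-cube mass estimate, mass retention outside $B(0,2^{-L}B_k)$, and disjoint shell windows along a sparse subsequence. Proving only $\UMdim E\le s$ and letting the chain $s\le\hDim E\le\Dt^+_1(E)=\UMdim E\le s$ close is a legitimate small economy over the paper, which proves both counting bounds.

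However, your derivation of the key Frostman claim is wrong as written, for three reasons.
\begin{enumerate}
\item A grid cube of side $B_{i-1}$ does not fix the first $i-1$ digits. It fixes the \emph{high} digits $a_i,\dots,a_k$, which are read off from $\lfloor x/B_{i-1}\rfloor$ coordinatewise, and leaves $a_1,\dots,a_{i-1}$ free. So such a cube contains at most $Q_{i-1}$ points of $E_k$, not $Q_k/Q_{i-1}$. For instance, with $b_j=3$ and $\mathcal A_j=\{0,2\}^d$, the cube $[0,3^{k-1})^d$ contains $2^{d(k-1)}$ points of $E_k$, far more than $Q_k/Q_{k-1}=2^d$.
\item The record property $Q_{i-1}B_{i-1}^{-t}\le Q_kB_k^{-t}$ gives $Q_k/Q_{i-1}\ge(B_k/B_{i-1})^t$, which is the reverse of the inequality you wrote.
\item Your displayed chain ends with a bound of order $(b^*B_k/r)^t$. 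This is not the claimed $C(r/B_k)^tQ_k$, and it grows as $r$ decreases, so it cannot serve as a mass-distribution estimate.
\end{enumerate}

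The repair is one line, and it is exactly the paper's argument. For $B_{i-1}\le r<B_i$, the ball meets at most $C_{d,b^*}$ cubes of side $B_{i-1}$, and each contains at most $Q_{i-1}$ points of $E_k$. The record property then gives $Q_{i-1}\le(B_{i-1}/B_k)^tQ_k\le(r/B_k)^tQ_k$, since $B_{i-1}\le r$. This bound holds for balls with arbitrary centres, so the recentring step is unnecessary, though harmless. With the corrected estimate, the rest of your lower-bound argument goes through as you describe: at least $Q_k/2$ points lie outside $B(0,2^{-L}B_k)$, each window's covering cost is bounded below by a positive constant, and the series diverges over disjoint windows.
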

\begin{proof}
Uniqueness of vector digit expansions gives $\#E_k=Q_k$.
If $\ell>k$ and a point of $E_\ell$ lies in $[0,B_k)^d$, all its digits
above level $k$ must be zero, so that point belongs to $E_k$.
Since every $n_j\ge2$,
\[
 Q_k\le\#(E\cap[0,B_k)^d)\le\sum_{\ell=1}^k Q_\ell<2Q_k.
\]
For an arbitrary large radius $r$, choose $k$ with $B_k\le r<B_{k+1}$. The upper cube estimate at $k+1$ gives
$\#(E\cap B(0,r))\le2Q_{k+1}$. Conversely, $E_k\subset B(0,2\sqrt d\,B_k)$ gives a count at least $Q_k$ at that radius. Since $b_j\le b^*$ and $n_j\le(b^*)^d$, changing $k$ by one changes both logarithms by bounded amounts. These two bounds imply $\UMdim E=s$. Lattice point counts and unit covering numbers are comparable, so Proposition~\ref{prop:mass-endpoint} also gives $\hDim E\le s$.

Fix $0<u<s$. The sequence $Q_k/B_k^u$ is unbounded, so there are
infinitely many record indices $k$ with
$Q_k/B_k^u\ge Q_\ell/B_\ell^u$ for $0\le\ell\le k$.
Let $\mu_k$ be uniform probability on $E_k$.
For $1\le r<B_k$, choose $\ell<k$ with $B_\ell\le r<B_{\ell+1}$.
A ball of radius $r$ meets at most $C_{d,b^*}$ grid cubes of side
$B_\ell$, and each contains at most $Q_\ell$ points of $E_k$.
The record property therefore gives
\begin{equation}
 \mu_k(B(x,r))\le C_{d,b^*}\frac{Q_\ell}{Q_k}
 \le C_{d,b^*}\left(\frac{B_\ell}{B_k}\right)^u
 \le C_{d,b^*}\left(\frac{r}{B_k}\right)^u.
 \label{eq:moran-record}
\end{equation}
For $r\ge B_k$ the same bound follows from total mass one.
Choose a fixed $\rho\in(0,1)$ so small that $C_{d,b^*}\rho^u<1/2$.
At every sufficiently large record index, $\rho B_k\ge1$, and $\mu_k$
gives mass at least $1/2$ to
$\mathcal R_k=\{x:\rho B_k\le|x|\le\sqrt d\,B_k\}$.
Let $J_k$ be the set of dyadic shell indices meeting $\mathcal R_k$.
Then $\#J_k$ is bounded independently of $k$, and $2^n\asymp B_k$
uniformly for $n\in J_k$. Take arbitrary legal macroscopic covers
$\{B(x_{n,i},r_{n,i})\}_i$ of $E\cap S_n$ for these indices.
Since $\mu_k$ is concentrated on $E_k\subset E$, \eqref{eq:moran-record}
yields
\[
 \frac12\le\mu_k(\mathcal R_k)
 \le C\sum_{n\in J_k}\sum_i\left(\frac{r_{n,i}}{B_k}\right)^u
 \le C'\sum_{n\in J_k}\sum_i\left(\frac{r_{n,i}}{2^n}\right)^u.
\]
The constants may depend on $d,b^*,u,\rho$, but not on $k$ or the covers.
Taking the covering infimum separately over the finitely many shells gives
$
 \sum_{n\in J_k}\mathcal{H}_n^u(E)\ge(2C')^{-1}>0.
$
As $B_k\to\infty$, a subsequence of record indices can be chosen with
pairwise disjoint groups $J_k$. Summing over these groups forces the
macroscopic covering series to diverge at $u$. Hence $\hDim E\ge u$;
letting $u\uparrow s$ proves the assertion.
\end{proof}

\begin{corollary}
\label{cor:moran-projection}
Let $E$ be as in Definition~\ref{def:moran}, with $s$ given by
\eqref{eq:moran-dimension}. For every $1\le m<d$, there is a full-measure
set $\mathcal G_{E,m}\subset G(d,m)$ such that
\begin{equation}
 \Dt^-_\theta(P_VE)=\Dt^+_\theta(P_VE)=\min\{m,s\}
 \label{eq:moran-projection}
\end{equation}
for all $V\in\mathcal G_{E,m}$ and  $0\le\theta\le1$.
In the discrete self-similar case $b_j=b$ and
$\mathcal A_j=\mathcal A\ni0$, the value is
$\min\{m,\log\#\mathcal A/\log b\}$.
\end{corollary}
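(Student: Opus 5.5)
The plan is to combine Proposition~\ref{prop:moran-endpoints} with the first criterion of Proposition~\ref{prop:equality} (taking $\theta_0=1$), and then to invoke Theorem~\ref{thm:projection}. The set $E$ in Definition~\ref{def:moran} is a subset of $\Z^d$. It is therefore uniformly discrete, closed and Borel. It is also unbounded, because $\#E_k=Q_k\to\infty$ and $E\subset\Z^d$. Proposition~\ref{prop:mass-endpoint} then gives $\Dt^+_1(E)=\UMdim E$. By Proposition~\ref{prop:moran-endpoints}, this common value equals $\hDim E=s$. The hypothesis of Proposition~\ref{prop:equality}(1) thus holds with $d_0=s$ and $\theta_0=1$. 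We conclude that $\lD_\theta^m(E)=\uD_\theta^m(E)=\min\{s,m\}$ for every $\theta\in[0,1]$. The same squeeze also gives $\Dt^\pm_\theta(E)=s$ for all $\theta$.

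Next apply Theorem~\ref{thm:projection} to $E$ and $m\in\{1,\ldots,d-1\}$. It provides a single full-measure set $\mathcal G_{E,m}$ on which $\Dt^-_\theta(P_VE)=\lD_\theta^m(E)$ and $\Dt^+_\theta(P_VE)=\uD_\theta^m(E)$ hold for all $\theta\in[0,1]$ at once. Inserting the profile values from the previous step gives \eqref{eq:moran-projection}.

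For the self-similar case, $b_j=b$ and $n_j=\#\mathcal A$ for every $j$. The ratio in \eqref{eq:moran-dimension} is then constantly $k\log\#\mathcal A/(k\log b)$, so $s=\log\#\mathcal A/\log b$. Definition~\ref{def:moran} requires $\#\mathcal A\ge2$, and $b\le b^*$ holds trivially, so the definition applies. The assumption $0\in\mathcal A$ only makes the levels nested and plays no role in the argument.

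There is no substantive obstacle, since each step is a direct citation. The one point to verify with care is that Proposition~\ref{prop:mass-endpoint} applies: the set must be uniformly discrete, and it is, with separation constant $1$. The upper mass dimension must also be computed with the same origin convention used in Proposition~\ref{prop:moran-endpoints}. If needed, Proposition~\ref{prop:qi} handles this, because changing the origin does not affect the dimensions.
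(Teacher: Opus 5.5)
Your proposal is correct and uses the same ingredients as the paper: $\Dt^+_1(E)=\UMdim E=\hDim E=s$ from Propositions~\ref{prop:mass-endpoint} and~\ref{prop:moran-endpoints}, the profile bounds of Proposition~\ref{prop:recovery}, and the single full-measure set of Theorem~\ref{thm:projection}. The only difference is cosmetic: you squeeze on the source side through Proposition~\ref{prop:equality}(1) and then transfer both profiles. The paper instead uses the theorem only for the zero endpoint and the upper profile, and squeezes directly on $P_VE$ with Proposition~\ref{prop:basic-endpoints}.
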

\begin{proof}
Since $E\subset\mathbb Z^d$ is uniformly discrete,
Propositions~\ref{prop:moran-endpoints} and~\ref{prop:mass-endpoint} give
$\Dt^+_1(E)=\UMdim E=s$.  On the common full-measure set in
Theorem~\ref{thm:projection}, the zero endpoint gives
$\hDim(P_VE)=\min\{m,s\}$.  For every $\theta\in[0,1]$,
Theorem~\ref{thm:projection}, \eqref{eq:source-upper}, and monotonicity of
the source spectrum yield
\[
 \Dt^+_\theta(P_VE)=\uD_\theta^m(E)
 \le\min\{m,\Dt^+_\theta(E)\}
 \le\min\{m,\Dt^+_1(E)\}=\min\{m,s\}.
\]
Together with Proposition~\ref{prop:basic-endpoints},
\[
 \min\{m,s\}=\hDim(P_VE)
 \le\Dt^-_\theta(P_VE)\le\Dt^+_\theta(P_VE),
\]
so all inequalities are equalities.  This proves
\eqref{eq:moran-projection} simultaneously for all $\theta$, without any
further exceptional set.
\end{proof}

\begin{example}
\label{ex:moran-exception}
The conclusion of Corollary~\ref{cor:moran-projection} cannot in general
hold for every $V$. Take $b_j=3$ and $\mathcal A_j=\{0,2\}^d$.
Then $s=d\log2/\log3$. A coordinate $m$-plane has projection equal to
the corresponding $m$-dimensional product digit set, and hence
\[
 \Dt^-_\theta(P_VE)=\Dt^+_\theta(P_VE)
 =\frac{m\log2}{\log3}
 <\min\left\{m,\frac{d\log2}{\log3}\right\}
 \quad(m<d,\ 0\le\theta\le1).
\]
The endpoint argument above also applies in dimension one.
For a specified subspace $V$ for which $P_VE$ is locally finite, the deterministic implication
\[
 \hDim(P_VE)=\UMdim(P_VE)=t_V
 \quad\Longrightarrow\quad
 \Dt^-_\theta(P_VE)=\Dt^+_\theta(P_VE)=t_V
 \quad(0\le\theta\le1)
\]
always holds. It gives a fixed-subspace statement when its endpoint
hypothesis can be checked, but it need not give the typical value.
\end{example}

\subsection{Lipschitz graphs}
We first isolate the covering principle needed for the graph application.

\begin{lemma}\label{lem:relative-cover-profile}
Let $E\subset\R^d$ be Borel and let $1\le m\le d$. Suppose there is a
constant $C$ such that, for every $n$ with $A_n:=E\cap S_n\ne\varnothing$,
every $x\in A_n$, and every $1\le r\le\rho$,
\begin{equation}
 A_n\cap B(x,\rho)\text{ can be covered by at most }
 C(\rho/r)^m\text{ balls of radius }r
 \label{eq:relative-m-cover}
\end{equation}
with centres in $A_n$. Then, for every $0<\theta\le1$,
\[
 \lD_\theta^m(E)=\Dt^-_\theta(E),\quad
 \uD_\theta^m(E)=\Dt^+_\theta(E).
\]
\end{lemma}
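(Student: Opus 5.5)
One inequality is already available: by \eqref{eq:source-upper} in Proposition~\ref{prop:recovery}, $\lD_\theta^m(E)\le\Dt^-_\theta(E)$ and $\uD_\theta^m(E)\le\Dt^+_\theta(E)$. The covering hypothesis \eqref{eq:relative-m-cover} forces $\Dt^\pm_\theta(E)\le m$, since it bounds $M_n(E)$ by $C2^{nm}$. So the task is the reverse inequality, and I would imitate the proof of \eqref{eq:ambient-recovery}, replacing the ambient dimension $d$ by $m$.

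The model is the second half of Lemma~\ref{lem:ambient-cover}. Fix $0<u<v<m$, $n\ge N$ and $R=2^{n-N\theta}$, and take a near-optimal mixed cover of $A_n$ realising $\Mt_R^{v,m}(A_n)$, with centres in $A_n$ and radii $r_i\ge1$.
\begin{itemize}
\item A ball with $r_i\le R$ is kept as it is.
\item A ball with $r_i>R$ is replaced using \eqref{eq:relative-m-cover} with $\rho=r_i$ and $r=R$. This gives at most $C(r_i/R)^m$ balls of radius $R$, centred in $A_n\subset S_n$, covering $A_n\cap B(x_i,r_i)$.
\end{itemize}
Their total normalized cost is $C(r_i/R)^mR^v2^{-nv}=C2^{-nv}\psi_R^{v,m}(r_i)$. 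This step uses the intrinsic $m$-dimensional covering bound in place of the $d$-dimensional volume count, which is exactly what the hypothesis supplies. Hence
\[
\mathcal H_{n,N}^{v,\theta}(E)\le C\,2^{-nv}\Mt_R^{v,m}(A_n).
\]
Next apply \eqref{eq:mixed-outer} of Lemma~\ref{lem:mixed} and sum over $n\ge N$:
\[
\mathcal H_N^{v,\theta}(E)\le C(N+1)2^{-N\theta(v-u)}\Ct_N^{u,\theta,m}(E).
\]

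Now fix $\theta>0$ and suppose $\lD_\theta^m(E)<m$; otherwise the identity follows from $\Dt^-_\theta(E)\le m$. Choose $\lD_\theta^m(E)<u<v<m$. Then the liminf growth rate of $\Ct_N^{u,\theta,m}(E)$ is nonpositive, so the liminf growth rate of $\mathcal H_N^{v,\theta}(E)$ is at most $-\theta(v-u)<0$. Hence $\Dt^-_\theta(E)\le v$, and letting $v\downarrow\lD_\theta^m(E)$ gives the lower identity. The upper identity follows in the same way with limsup in place of liminf.

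The main obstacle is the replacement step. One must check that \eqref{eq:relative-m-cover} applies with $\rho=r_i$ possibly exceeding $\diam A_n$; this is fine because the hypothesis allows all $\rho\ge r$. One must also check that the replacement radius $R\ge1$ is admissible, which holds because $n\ge N$. The logarithmic factor $(N+1)$ is harmless only because $\theta>0$ supplies the exponential gain $2^{-N\theta(v-u)}$; this is why the statement restricts to $\theta>0$.
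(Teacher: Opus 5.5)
Your proposal is correct and follows the paper's proof essentially step for step. Both arguments replace each large ball of a mixed $\Mt_R^{v,m}$-cover using the intrinsic covering hypothesis \eqref{eq:relative-m-cover}, derive \eqref{eq:relative-profile-comparison} from Lemma~\ref{lem:mixed}, and absorb the factor $(N+1)$ through the gain $2^{-N\theta(v-u)}$ available for $\theta>0$; the only cosmetic difference is that you conclude directly, as in the proof of \eqref{eq:ambient-recovery}, while the paper phrases the last step as a contradiction.
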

\begin{proof}
The upper bounds for the profiles are already contained in
\eqref{eq:source-upper}. The hypothesis with $r=1$ also gives
$\Dt^+_\theta(E)\le m$. Indeed, if $A_n\ne\varnothing$, choose
$x\in A_n$. Since $A_n\subset B(x,2^{n+1})$, \eqref{eq:relative-m-cover}
with $\rho=2^{n+1}$ covers $A_n$ by $O(2^{mn})$ unit balls. Thus it
remains to prove the reverse inequalities.

Fix $0<u<v<m$, $n\ge N$, and set $R=2^{n-N\theta}$. Start with a mixed
cover in the definition of $\Mt_R^{v,m}(A_n)$. A ball of radius at most
$R$ is retained. If a covering ball $B(x,r)$ has radius $r>R$, then its
centre lies in $A_n$, and \eqref{eq:relative-m-cover} covers
$A_n\cap B(x,r)$ by at most $C(r/R)^m$ balls of radius $R$, all centred
in $A_n$. Replacing every large ball in this way gives an admissible
cover for $\mathcal H_{n,N}^{v,\theta}(E)$ and hence
$
 \mathcal H_{n,N}^{v,\theta}(E)
 \le C2^{-nv}\Mt_R^{v,m}(A_n).
$
Lemma~\ref{lem:mixed} therefore yields
\begin{equation}
 \mathcal H_N^{v,\theta}(E)
 \le C_{u,v,m}(N+1)2^{-N\theta(v-u)}
       \Ct_N^{u,\theta,m}(E).
 \label{eq:relative-profile-comparison}
\end{equation}

Suppose, for one of the two signs, that the corresponding profile is
strictly smaller than the corresponding macroscopic intermediate dimension. Since both are
at most $m$, we may choose $v<m$ such that
$
 \Dpm_\theta^{\pm,m}(E)<u<v<\Dt_\theta^\pm(E),
$
with the same sign throughout. At exponent $u$ the capacity tail has
nonpositive normalized growth, along a subsequence for the lower sign
and at all sufficiently large scales for the upper sign. The factor
$2^{-N\theta(v-u)}$ in \eqref{eq:relative-profile-comparison} then makes
the covering tail have negative normalized growth at exponent $v$,
contradicting $v<\Dt_\theta^\pm(E)$. Hence equality holds for both signs.
\end{proof}

The following result shows that almost every orthogonal projection onto an $m$-dimensional subspace preserves both macroscopic intermediate dimensions of unbounded Borel subsets of $k$-dimensional Lipschitz graphs, simultaneously for all $\theta\in[0,1]$, whenever $k\le m<d$.
\begin{theorem}\label{thm:graphs}
Let $1\le k\le m<d$, let $A\subset\R^k$ be unbounded and Borel,
and let $f:\R^k\to\R^{d-k}$ be $L$-Lipschitz. Put
$E_f=\{(x,f(x)):x\in A\}$. Then
\[
 \Dt^\pm_\theta(E_f)=\Dt^\pm_\theta(A),\qquad
 g_{E_f}^\pm(\theta)=\Dt^\pm_\theta(A)
 \quad(0\le\theta\le1).
\]
Thus both dimensions are preserved by almost every projection onto an
$m$-plane, simultaneously at all parameters, even when the source spectra
differ. For an individual $V\in G(d,m)$, put
$T_Vx=P_V(x,0)$ and $U_Vy=P_V(0,y)$. Preservation also holds for that $V$
whenever
\begin{equation}
\sigma_{\min}(T_V)>L\|U_V\|,
 \label{eq:graph-individual}
\end{equation}
where $\sigma_{\min}(T_V):=\inf_{|x|=1}|T_Vx|$  denotes the smallest singular value of $T_V$.
\end{theorem}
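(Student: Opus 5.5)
The plan has three steps: reduce the source spectra of $E_f$ to those of $A$ by a quasi-isometry, identify the profiles of $E_f$ with its own spectra via Lemma~\ref{lem:relative-cover-profile}, and then apply Theorem~\ref{thm:projection}.

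\emph{Source spectra.} The map $\Phi(x)=(x,f(x))$ satisfies $|x-x'|\le|\Phi(x)-\Phi(x')|\le\sqrt{1+L^2}\,|x-x'|$. Hence $\Phi:A\to E_f$ is a bi-Lipschitz surjection, in particular a quasi-isometry with $c=0$. Proposition~\ref{prop:qi}, which allows different ambient dimensions, gives $\Dt^\pm_\theta(E_f)=\Dt^\pm_\theta(A)$ for all $\theta\in[0,1]$. Since $A\subset\R^k$, Proposition~\ref{prop:basic-endpoints} gives $\Dt^+_\theta(A)\le k\le m$, and the same bound then holds for $E_f$.

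\emph{Profiles for $\theta>0$.} I will verify hypothesis \eqref{eq:relative-m-cover} for $E_f$ with exponent $m$. Fix $n$, $x\in A_n=E_f\cap S_n$ and $1\le r\le\rho$. Take a maximal $r$-separated subset $Y$ of $A_n\cap B(x,\rho)$. The radius-$r$ balls about $Y$ cover this set and have centres in $A_n$, which is exactly the centring \eqref{eq:relative-m-cover} requires. The first-coordinate projections of $Y$ are $r/\sqrt{1+L^2}$-separated points of a ball of radius $\rho$ in $\R^k$. Volume comparison in $\R^k$ therefore gives $\#Y\le C_{k,L}(\rho/r)^k\le C_{k,L}(\rho/r)^m$, because $\rho/r\ge1$ and $k\le m$. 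Note that this argument never needs the preimages to lie in a single shell of $\R^k$. Lemma~\ref{lem:relative-cover-profile} then yields $g^\pm_{E_f}(\theta)=\Dt^\pm_\theta(E_f)=\Dt^\pm_\theta(A)$ for $0<\theta\le1$.

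\emph{The endpoint $\theta=0$ and the almost-every statement.} At $\theta=0$, \eqref{eq:cap-zero} gives $g^\pm_{E_f}(0)=\min\{\hDim E_f,m\}$. Since $\hDim E_f=\Dt^\pm_0(E_f)\le k\le m$, this equals $\Dt^\pm_0(A)$. Now $E_f$ is Borel, being the image of the Borel set $A$ under the injective continuous map $\Phi$, and it is unbounded. Theorem~\ref{thm:projection} therefore gives $\Dt^\pm_\theta(P_VE_f)=g^\pm_{E_f}(\theta)=\Dt^\pm_\theta(E_f)$ for all $V$ in one full-measure set and all $\theta$ simultaneously.

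\emph{Individual $V$.} For $z=(x,f(x))$ and $z'=(x',f(x'))$ we have
\[
|P_Vz-P_Vz'|\ge|T_V(x-x')|-|U_V(f(x)-f(x'))|\ge\bigl(\sigma_{\min}(T_V)-L\|U_V\|\bigr)|x-x'|.
\]
Moreover $|x-x'|\ge|z-z'|/\sqrt{1+L^2}$, and $|P_Vz-P_Vz'|\le|z-z'|$. Under \eqref{eq:graph-individual}, $P_V|_{E_f}$ is therefore bi-Lipschitz onto $P_VE_f$, and Proposition~\ref{prop:qi} gives preservation for that $V$ at all $\theta$.

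I expect the main obstacle to be the relative covering step, specifically producing centres in $A_n$ with a count uniform in $n$. Taking the separated set inside $A_n\cap B(x,\rho)$ itself, rather than pulling back a cover from $\R^k$, is what resolves it.
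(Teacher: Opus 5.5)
Your proposal is correct and follows essentially the same route as the paper. Both arguments use bi-Lipschitz (quasi-isometry) invariance for the source spectra, verify the relative $m$-covering hypothesis of Lemma~\ref{lem:relative-cover-profile} through the first-coordinate projection, apply \eqref{eq:cap-zero} at $\theta=0$, invoke Theorem~\ref{thm:projection}, and use the same singular-value estimate for individual $V$. For the covering hypothesis you take a maximal separated set in the graph shell and count its projection, whereas the paper covers the projection and lifts the cover; this is the same estimate. Your explicit check that $E_f$ is Borel and unbounded, which the paper leaves implicit, is a welcome addition. Note that $E_f=\Gamma_f\cap(A\times\R^{d-k})$, with $\Gamma_f$ the closed graph of $f$, gives Borelness more directly than the Lusin--Souslin theorem.
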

\begin{proof}
The graph map $x\mapsto(x,f(x))$ is bi-Lipschitz. More generally, the
same covering estimate holds for every subset of the graph: if a graph
portion lies in a ball of radius $\rho$, then its first-coordinate
projection lies in a $k$-dimensional ball of radius $\rho$. Cover that
projection by $O_{k,L}((\rho/r)^k)$ balls of radius comparable to
$r/\sqrt{1+L^2}$, choosing the centres in the projected subset, and lift
them to the graph. Applied to each shell $E_f\cap S_n$, and using
$k\le m$, this gives \eqref{eq:relative-m-cover}.  Lemma~\ref{lem:relative-cover-profile}
and Proposition~\ref{prop:qi} therefore imply, for $\theta>0$,
$
 g_{E_f}^\pm(\theta)=\Dt_\theta^\pm(E_f)=\Dt_\theta^\pm(A).
$
At $\theta=0$, Proposition~\ref{prop:recovery} gives
$g_{E_f}^\pm(0)=\hDim E_f$, because $\hDim E_f\le k\le m$; the
quasi-isometry invariance and Proposition~\ref{prop:basic-endpoints} then
give $\hDim E_f=\hDim A=\Dt_0^\pm(A)$.  The almost-everywhere
projection assertion follows from Theorem~\ref{thm:projection}.
For a fixed subspace satisfying \eqref{eq:graph-individual},
\[
 |T_V(x-y)+U_V(f(x)-f(y))|
 \ge(\sigma_{\min}(T_V)-L\|U_V\|)|x-y|.
\]
The reverse Lipschitz estimate is immediate, so the projected graph map
is bi-Lipschitz. In the planar case the condition reduces to
$|\cos\alpha|>L|\sin\alpha|$.
\end{proof}

\subsection{Arithmetic sumsets}
In this subsection $(d,m)=(2,1)$, and $g_E^\pm$ denotes the line-projection profile.
\begin{theorem}\label{thm:sumsets}
Let $A,B\subset\R$ be unbounded uniformly discrete sets. Suppose
$0<a,b\le1$ and
\[
 \begin{aligned}
 \#(A\cap B(x,r))&\le Cr^a,&\#(B\cap B(y,r))&\le Cr^b
       &&(x,y\in\R,r\ge1),\\
 \#(A\cap[-R,R])&\asymp R^a,&\#(B\cap[-R,R])&\asymp R^b
       &&(R\to\infty).
 \end{aligned}
\]
Then, for almost every $t\in\R$, simultaneously for $\theta\in[0,1]$,
\[
 \Dt^-_\theta(A+tB)=\Dt^+_\theta(A+tB)=\min\{a+b,1\}.
\]
The sumset itself need not be locally finite.
\end{theorem}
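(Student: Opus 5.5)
Consider the planar set $E=A\times B\subset\R^2$. With $P_\alpha(x_1,x_2)=x_1\cos\alpha+x_2\sin\alpha$, we have $P_\alpha E=\cos\alpha\,(A+tB)$ with $t=\tan\alpha$ whenever $\cos\alpha\ne0$. A nonzero dilation is a quasi-isometry, so Proposition~\ref{prop:qi} gives $\Dt^\pm_\theta(A+tB)=\Dt^\pm_\theta(P_\alpha E)$. The map $\alpha\mapsto\tan\alpha$ sends Lebesgue null sets of $[0,\pi)\setminus\{\pi/2\}$ to Lebesgue null sets of $\R$, since it is a local diffeomorphism. Hence Theorem~\ref{thm:projection}, with its single full-measure set for all $\theta$, reduces the claim to showing
\[
 \lD^1_\theta(E)=\uD^1_\theta(E)=\min\{a+b,1\}\qquad(0\le\theta\le1).
\]
For this I would use Proposition~\ref{prop:equality}(1) with $\theta_0=1$ and $d_0=a+b$. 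It then suffices to show $\hDim E=\Dt^+_1(E)=a+b$. I plan to use the sufficient condition \eqref{eq:regular-measure}, or the coarse Ahlfors-regular version stated after Proposition~\ref{prop:equality}.

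\textbf{Upper bound.} $E$ is uniformly discrete in $\R^2$, since $A$ and $B$ are. By Proposition~\ref{prop:mass-endpoint}, $\Dt^+_1(E)=\UMdim E$. Now $\#(E\cap B(0,R))\le\#(A\cap[-R,R])\,\#(B\cap[-R,R])\lesssim R^{a+b}$, which gives $\Dt^+_1(E)\le a+b$. Equivalently, $M_n(E)=O(2^{(a+b)n})$, which is the covering hypothesis in \eqref{eq:regular-measure}.

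\textbf{Measure.} Let $\mu$ be counting measure on $E$. For $x=(x_1,x_2)\in E$ and $r\ge1$, the ball $B(x,r)$ lies in the square $(x_1-r,x_1+r)\times(x_2-r,x_2+r)$. So $\mu(B(x,r))\le\#(A\cap B(x_1,r))\,\#(B\cap B(x_2,r))\le C^2r^{a+b}$. For the lower mass bound on shells, the two-sided counting hypothesis gives $\mu(B(0,R))\ge\#(A\cap[-R/2,R/2])\,\#(B\cap[-R/2,R/2])\ge c_0R^{a+b}$ for large $R$, together with $\mu(B(0,R))\le C_0R^{a+b}$. The pigeonhole argument following Proposition~\ref{prop:equality} then yields infinitely many $n$ with $\mu(S_n)\ge c2^{(a+b)n}$. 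That argument uses only these two global bounds, not a lower bound on $\mu(B(x,r))$.

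\textbf{Conclusion.} The argument after Proposition~\ref{prop:equality} then gives $\hDim E=a+b=\Dt^+_1(E)$. Proposition~\ref{prop:equality}(1) gives both profiles equal to $\min\{a+b,1\}$ for all $\theta\in[0,1]$.

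The main point to check is that the remark after Proposition~\ref{prop:equality}, leading to $\hDim E\ge d_0$, only needs the upper ball bound at centres in $E$ plus heavy shells. The argument there is: recentre each covering ball at a point of $E$, so that a cover of $E\cap S_n$ with radii $r_i$ has $\sum_i r_i^{d_0}\gtrsim\mu(S_n)\gtrsim 2^{d_0 n}$, and the normalized cost is bounded below. I would write this step out explicitly rather than cite it. The quasi-isometry and measure-class transfer between $\alpha$ and $t$ is routine. No local finiteness of $A+tB$ is needed, since Theorem~\ref{thm:projection} makes no such assumption on the target.
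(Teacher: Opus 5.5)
Your proposal is correct and follows the paper's proof essentially step for step. You pass to $E=A\times B$ with counting measure and obtain the upper ball bound $Cr^{a+b}$ and centre-ball growth $\asymp R^{a+b}$. You extract heavy shells by the pigeonhole argument and apply the criterion \eqref{eq:regular-measure} with Proposition~\ref{prop:equality}(1). You then transfer from $\alpha$ to slopes via $P_\alpha E=\cos\alpha\,(A+\tan\alpha\,B)$, dilation invariance, and the change of variable $t=\tan\alpha$. Your explicit checks of the shell lower bound and the recentring step supply details that the paper only sketches.
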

\begin{proof}
Counting measure on $A\times B$ has upper ball bound $Cr^{a+b}$
and centre-ball growth comparable to $R^{a+b}$. Comparing radii that
differ by a sufficiently large fixed factor gives infinitely many
heavy dyadic shells. Unit covering numbers have upper rate at most
$a+b$. The criterion \eqref{eq:regular-measure} and
Proposition~\ref{prop:equality} give typical projected value
$\min\{a+b,1\}$ for both signs and all parameters.
For $\cos\alpha\ne0$,
$P_\alpha(A\times B)=\cos\alpha\,(A+\tan\alpha\,B)$.
Dilation invariance and the locally bi-Lipschitz change of variable
$t=\tan\alpha$ transfer the null set to slopes.
\end{proof}

For nonconstant polynomials $p,q\in\Z[x]$ of degrees $r,s$,
Theorem~\ref{thm:sumsets} yields
\[
 \Dt^\pm_\theta\bigl(p(\Z)+tq(\Z)\bigr)=\min\{1,1/r+1/s\}
\]
for almost every $t$, simultaneously at all parameters.
Indeed, $p(\Z)$ has $\asymp R^{1/r}$ distinct values in $[-R,R]$.
On each sufficiently distant half-line the polynomial is monotone and
$|p(u)-p(v)|\gtrsim|u-v|^r$ for integers $u,v$ there.
Thus an interval of length $2\rho\ge2$ contains at most $C_p\rho^{1/r}$
values; the finite remaining part changes only the constant. Apply the
same argument to $q$. Related arithmetic counting problems were studied
in \cite{LM,Glasscock}.

For an integer base $b\ge2$ and a digit set
$0\in\mathcal D\subset\{0,\ldots,b-1\}$ with $M=\#\mathcal D\ge2$,
let
\[
 \Lambda(b,\mathcal D)=
 \left\{\sum_{j=0}^{k-1}d_jb^j:k\ge1,\ d_j\in\mathcal D\right\}.
\]
Its counting measure is coarsely $d_0$-Ahlfors regular, where
$d_0=\log M/\log b$. To prove this, choose $b^k\le r<b^{k+1}$.
Keeping the higher digits of a point and varying its $k$ lower digits
gives $M^k$ points within radius $r$. Conversely, an interval of length
$2r$ meets at most $2b+2$ grid intervals of length $b^k$, and each such
interval contains at most $M^k$ points. The preceding exact formulae
therefore apply to these digit sets, including pairs with different bases.

In particular, let $\Lambda=\Lambda(4,\{0,1\})$. This set is a
spectrum of the equal-weight self-similar measure with maps $x/4$ and
$(x+2)/4$, see \cite{JP}. Theorem~\ref{thm:sumsets} gives typical sumset
dimension one, for both signs and all parameters. The special slopes
satisfy
\[
 \Dt^\pm_\theta(\Lambda)=\tfrac12,\qquad
 \Dt^\pm_\theta(\Lambda+\Lambda)=\frac{\log3}{\log4},\qquad
 \Dt^\pm_\theta(\Lambda+2\Lambda)=1.
\]
The last two identities follow from digit addition without carries:
the sums are $\Lambda(4,\{0,1,2\})$ and $\{0,1,2,\ldots\}$,
respectively. These statements concern the geometry of the frequency
sets and do not assert that the sumsets are spectra. Bounded perturbations
of either factor preserve all these dimensions, by
Proposition~\ref{prop:qi} applied at each fixed slope.

\section*{Statements and Declarations}
\subsection*{Funding}
This work was supported by the National Natural Science Foundation of China
(Grant Nos. 12171107 and 12571096).

\subsection*{Data Availability}
This article is theoretical. No research datasets were generated or analysed.

\subsection*{Competing Interests}
The authors declare that they have no financial or non-financial interests
relevant to the content of this article.

\subsection*{Use of generative AI}
During preparation of this manuscript, the authors used Chat
GPT (OpenAI) to assist with language revision. The authors are responsible for checking the mathematical arguments, citations, and final text.

\end{document}